\newtheorem{theorem}{Theorem}
\newtheorem{lemma}[theorem]{Lemma}
\newtheorem{pro}[theorem]{Proposition}
\newtheorem{defin}[theorem]{Definition}
\newtheorem{cor}[theorem]{Corollary}
\newtheorem{problem}[theorem]{Problem}
\newenvironment{proof}{\noindent {\sl Proof.}}{$\Box$ \bigskip}
\newtheorem{rmk}[theorem]{Remark}
\newtheorem{exa}[theorem]{Example}
\numberwithin{equation}{section}
\numberwithin{theorem}{section}
\begin{document}


\title{Topologies on groups determined by sets of convergent sequences}

\author{S.~S.~Gabriyelyan
\\{\footnotesize Department of Mathematics,}
\\{\footnotesize Ben-Gurion University of the Negev,}
\\{\footnotesize  Beer-Sheva, P.O. 653,}
\\{\footnotesize  Israel}
\\{\footnotesize  e-mail: saak@math.bgu.ac.il}}

\date{}


\maketitle


\begin{abstract}
A Hausdorff  topological group $(G,\tau)$ is called an $s$-group and $\tau$ is called an $s$-topology if there is a set $S$ of sequences in $G$ such that $\tau$ is the finest Hausdorff group topology on $G$ in which every sequence of $S$ converges to the unit. The class $\mathbf{S}$ of all  $s$-groups contains all sequential Hausdorff groups and it is finitely multiplicative. A quotient group of an $s$-group is an $s$-group. For a non-discrete topological group $(G,\tau)$ the following three assertions are equivalent: 1) $(G,\tau)$ is an $s$-group, 2) $(G,\tau)$ is a quotient group of a Graev free topological group over a metrizable space, 3) $(G,\tau)$ is a quotient group of a Graev free  topological group over a sequential Tychonoff space. The Abelian version of this characterization of $s$-groups holds as well.
\end{abstract}





\section{Introduction}

{\bf I. Notations and preliminaries result.} A group $G$ with the discrete topology is denoted by $G_{d}$. The unit of $G$ is denoted by $e_G$. The subgroup generated by a subset $A$ of $G$ is denoted by $\langle A\rangle$. The group of all permutations of the set $\{ 0, 1,\dots,n-1\}$  is denoted by $\mathbb{S}_n$. Set $\omega =\mathbb{N} \cup \{ 0\}$. The filter of all open neighborhoods of $e_G$ of a topological group $(G,\tau)$ is denoted by $\mathcal{U}_G$.

Let $\mathbf{u}=\{ u_n\}$ be a non-trivial sequence in a  group $G$. The following  very important question has been studied by many authors as  Graev \cite{Gra}, Nienhuys \cite{Nie}, and others:
\begin{problem} \label{prob1}
Is there a Hausdorff group topology $\tau$ on $G$ such that $u_n \to e_G$ in $(G,\tau)$?
\end{problem}
Protasov and  Zelenyuk \cite{ZP1, ZP2} obtained a criterion that gives the complete answer to this question \cite[Theorems 2.1.3 and 3.1.4]{ZP2} (see also the cases (ii) and (iii) in Section \ref{sec1}). Following  \cite{ZP1}, we say that a sequence $\mathbf{u} =\{ u_n \}$ in a group $G$ is a $T$-{\it sequence} if there is a Hausdorff group topology on $G$ in which $u_n $ converges to $e_G$. The group $G$ equipped with the finest Hausdorff group topology $\tau_\mathbf{u}$ with this property is denoted by $(G, \tau_\mathbf{u})$.  A $T$-sequence $\mathbf{u}=\{ u_n\}$ is called {\it trivial} if there is $n_0$ such that $u_n =e_G$ for every $n\geq n_0$. Evidently, if $\mathbf{u}$ is trivial, then $\tau_\mathbf{u}$ is discrete. A sequence $\mathbf{u}=\{ u_n \}_{n\in \omega}$ in  $G$ is called {\it one-to-one} if  $u_n \not= u_m$ for all $n,m\in\omega$ such that $n\not= m$.

One of the most important notions in the article is the notion of a {\it Graev free (Abelian) topological group} introduced in \cite{Gra}.
\begin{defin} {\rm \cite{Gra}}
Let $X$ be a Tychonoff space with a fixed point (basepoint) $e\in X$. A topological group $F(X)$  is called {\em  the Graev free topological  group} over  $X$ if $F(X)$  satisfies the following conditions:
\begin{enumerate}
\item[{\rm (i)}] There is a continuous mapping $i: X\to F(X)$  such that $i(e)=e_{F(X)}$  and $i(X)$ algebraically generates $F(X)$.
\item[{\rm (ii)}] If a continuous mapping $f: X\to G$ to a topological  group $G$ satisfies $f(e)=e_G$, then there exists a continuous homomorphism ${\bar f}: F(X) \to G$  such that $f={\bar f} \circ i$.
\end{enumerate}
\end{defin}
The Graev free {\it Abelian} topological group $A(X)$ over a Tychonoff space $X$ with a fixed point is defined similarly. Note that  $A(X)$ is a quotient group of the Graev free topological group $F(X)$. Let us note also that the mapping $i: X\to F(X)$ (respectively $i: X\to A(X)$) is a topological embedding and $i(X)$ is closed in $F(X)$ (respectively in $A(X)$) \cite{Gra}.

The following group is the simplest and, as it turns out, the most important example of an Abelian topological group with the topology generated by a $T$-sequence (see Theorem \ref{t02} below). For simplicity's sake, let us denote  by $\mathbb{Z}^{\mathbb{N}}_0$ the direct sum $\bigoplus_\mathbb{N} \mathbb{Z} \subset \mathbb{Z}^\mathbb{N}$. Then the sequence $\mathbf{e} =\{ e_n\} \in\mathbb{Z}_0^{\mathbb{N}}$, where $e_1 =(1,0,0,\dots), e_2 = (0,1,0,\dots), \dots$, converges to zero in the topology induced on $\mathbb{Z}^{\mathbb{N}}_0$ by the product topology on $(\mathbb{Z}_d)^\mathbb{N}$. Thus $\mathbf{e}$ is a $T$-sequence. The topology $\tau_\mathbf{e}$ and the dual group of $(\mathbb{Z}_0^{\mathbb{N}} , \tau_\mathbf{e})$ are described in \cite{Gab} explicitly.  The convergent sequence $\mathbf{e}$ with zero forms a compact metrizable space $E:=\mathbf{e}\cup \{ 0\}$. Denote by $F(\mathbf{e})$ (respectively $A(\mathbf{e})$) the Graev free topological (respectively Abelian) group generated by $E$  ($0$ is basepoint of $E$). By the definitions of $F(\mathbf{e})$, $A(\mathbf{e})$  and $\tau_\mathbf{e}$, we obtain that
\[
F(\mathbf{e})\cong (F,\tau_\mathbf{e}) \; \mbox{ and } \; A(\mathbf{e}) \cong (\mathbb{Z}_0^{\mathbb{N}} , \tau_\mathbf{e}),
\]
where $F$ is the free group over the alphabet $\mathbf{e} =\{ e_1, e_2, \dots \}$.

A subset $A$ of a topological space $\Omega$ is called {\it sequentially open} if whenever a sequence $\{ u_n\} $ converges to a point of $A$, then  all but finitely many of the members $u_n$ are contained in $A$. The space $\Omega$ is called {\it sequential} if any subset $A$ is open if and only if $A$ is sequentially open. Every sequential Hausdorff space is a $k$-space \cite[Theorem 3.3.20]{Eng}. Recall that a topological space $X$ is called a $k$-{\it space} if $X$ is Hausdorff and a subset $A$ of $X$ is closed in $X$ if and only if $A\cap K$ is compact for every compact subset $K$ of $X$. Let $\Omega $ be a sequential space. For a subset $E$ of $\Omega$ the set of all limit points of all convergent sequences in $E$ is denoted by $[E]^{s}$. Clearly, $[E]^{s} \subseteq \mathrm{cl}(E)$. Set $[E]_0 =E$, $[E]_{\alpha +1} = \left[ [E]_{\alpha }\right]^{s}$ and $[E]_{\alpha } =\cup_{\beta <\alpha} [E]_{\beta}$ for a limit ordinal $\alpha$.
The {\it sequential order} ${\rm so } (\Omega)$  of  $\Omega$ is the least ordinal $\alpha$ such that $[E]_{\alpha } ={\rm cl } (E)$ for every subset $E\subseteq \Omega$. Note that  $\mathrm{so}(\Omega)\leq \omega_1$, where $\omega_1$ is the first uncountable ordinal \cite[Proposition 1.3]{ArF}.
The space $\Omega$ is a {\it Fr\'{e}chet-Urysohn} space if for any point $x$ in the closure of an arbitrary subset $E$ there is a sequence in $E$ converging to $x$. So $\Omega$ is  Fr\'{e}chet-Urysohn if and only if ${\rm so } (\Omega)=1$. For more about sequential and Fr\'{e}chet-Urysohn spaces see \cite{Eng, Fra}. Franklin \cite{Fra} gave  the following characterization of sequential spaces:
\begin{theorem} \label{t01} {\rm \cite{Fra}}
A topological space is sequential if and only if it is a quotient of a metric space.
\end{theorem}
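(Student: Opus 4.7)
The plan is to prove both implications separately, with the constructive direction being the substantive one.

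For the easy direction, I would first observe that every metric space is sequential (in fact Fréchet-Urysohn), so it suffices to show that a quotient of a sequential space is sequential. Let $q\colon Y\to X$ be a quotient map with $Y$ sequential, and take $A\subseteq X$ sequentially open. To show $A$ is open in $X$, I must show $q^{-1}(A)$ is open in $Y$, and since $Y$ is sequential it is enough to prove $q^{-1}(A)$ is sequentially open. If $y_n\to y$ in $Y$ with $y\in q^{-1}(A)$, then $q(y_n)\to q(y)\in A$, so eventually $q(y_n)\in A$, hence eventually $y_n\in q^{-1}(A)$.

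For the hard direction, suppose $X$ is sequential. I would build an explicit metric space mapping onto $X$ as a topological disjoint sum of convergent sequences. Let $\mathcal{S}$ be the collection of all convergent sequences $s=(s_n)_{n\in\omega}$ in $X$ together with their limits $s_\infty$. For each such $s$, let $K_s$ be a homeomorphic copy of the one-point compactification $\omega+1$ (a compact metrizable space), with points labelled $k_n^s$ for $n\in\omega$ and $k_\infty^s$. Form
\[
M \;=\; \bigsqcup_{s\in\mathcal{S}} K_s,
\]
which is metrizable as a topological sum of metrizable spaces. Define $q\colon M\to X$ by $q(k_n^s)=s_n$ and $q(k_\infty^s)=s_\infty$. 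Continuity of $q$ on each summand $K_s$ is immediate from the convergence $s_n\to s_\infty$, so $q$ is continuous on $M$.

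The key step is to show $q$ is a quotient map (it is clearly surjective, since every $x\in X$ appears as the limit of the constant sequence). Suppose $A\subseteq X$ with $q^{-1}(A)$ open in $M$; I must show $A$ is open in $X$. Because $X$ is sequential, it suffices to check $A$ is sequentially open. Let $x_n\to x$ in $X$ with $x\in A$, and let $s$ be precisely this sequence together with limit $x$. Then $k_\infty^s\in q^{-1}(A)$ and $q^{-1}(A)\cap K_s$ is open in $K_s$, so by the topology of $\omega+1$ the set $q^{-1}(A)\cap K_s$ contains all but finitely many $k_n^s$, i.e.\ $x_n\in A$ eventually.

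The one point requiring care is that $\mathcal{S}$ is a proper class in general (sequences can repeat index sets of any cardinality, and indexing is by all maps $\omega\to X$); this is harmless because the construction of the topological sum and the definition of $q$ go through unchanged, and in fact one may restrict to sequences whose underlying set is at most countable without loss. The substantive obstacle is the last step: showing sequential-openness of $q^{-1}(A)$ on every convergent sequence in $X$ forces openness in $M$, which is exactly what the sum-of-convergent-sequences construction is designed to achieve.
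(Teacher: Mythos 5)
Your proof is correct, and it is essentially Franklin's original argument: the paper itself offers no proof of this statement, citing it directly to Franklin, so there is nothing internal to compare against. Both directions are handled properly: the easy direction correctly reduces to the fact that a quotient of a sequential space is sequential, and the hard direction uses the standard ``sum of convergent sequences'' construction, with the quotient property verified exactly where sequentiality of $X$ is needed.

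One small correction to your closing remark: the collection $\mathcal{S}$ is \emph{not} a proper class. A convergent sequence together with a chosen limit is an element of $X^{\omega}\times X$, so $\mathcal{S}$ is a subset of that set and the disjoint sum $M$ is an honest metrizable space with no foundational caveats required. (If you wanted $M$ to be an actual metric space rather than merely metrizable, you would fix a metric bounded by $1$ on each copy of $\omega+1$ and declare points in distinct summands to be at distance $1$; this is routine.) Everything else, including the observation that surjectivity comes from constant sequences and that openness of $q^{-1}(A)\cap K_s$ in $\omega+1$ forces cofinitely many $x_n$ into $A$, is exactly right.
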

Now we formulate the most important properties of the topology $\tau_\mathbf{u}$ generated by a $T$-sequence $\mathbf{u}$ in a  group $G$ (see Theorems 2.3.1 and 2.3.10, Corollary 4.1.5 and Exercise 4.3.1 of \cite{ZP2}). Recall that a topological space $X$ is called {\it hemicompact} if there exists an increasing sequence $\{ K_n \}_{n\in\omega}$ of compact subsets of $X$ such that every compact subset $K$ of $X$ is contained in $K_n$ for some $n\in\omega$ (see \cite[3.4.E]{Eng}).
\begin{theorem} \label{p01} {\rm \cite{ZP2}}
Let  $\mathbf{u}$ be a non-trivial $T$-sequence in a  group $G$.
Then  $(G,\tau_\mathbf{u})$ is a sequential space of sequential order $\omega_1$. Moreover, if $G$ is countable, then   $(G,\tau_\mathbf{u})$ is hemicompact.
\end{theorem}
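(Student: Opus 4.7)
The plan is to prove the three assertions in turn, starting from an explicit combinatorial description of the topology $\tau_\mathbf{u}$. Following Protasov--Zelenyuk, for each function $f:\omega\to\omega$ define $V(f)\subseteq G$ to consist of $e_G$ together with all products $u_{n_1}^{\varepsilon_1}\cdots u_{n_k}^{\varepsilon_k}$ (and their conjugates) with $n_i\geq f(i)$. One verifies that $\{V(f):f\in\omega^\omega\}$ is a neighborhood base at $e_G$ for a group topology on $G$, and that the $T$-sequence hypothesis on $\mathbf{u}$ is precisely the combinatorial condition needed to guarantee this topology is Hausdorff. By maximality of $\tau_\mathbf{u}$, the two topologies agree; this description is the technical backbone of everything that follows.

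For sequentiality, assume $A\subseteq G$ is sequentially closed; by homogeneity it suffices to show that $e_G\notin A$ forces $V(f)\cap A=\emptyset$ for some $f$. If no such $f$ exists, choose a pointwise-cofinal increasing sequence $f_1<f_2<\cdots$ in $\omega^\omega$ and pick $a_k\in V(f_k)\cap A$. Each $a_k$ is a word in $\mathbf{u}$ whose letters have indices at least $f_k(i)$; by arranging the $f_k$ to grow quickly enough one extracts a subsequence with $a_k\to e_G$ in $\tau_\mathbf{u}$, contradicting sequential closedness of $A$.

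For the sequential order, build by transfinite recursion, for each $\alpha<\omega_1$, a set $E_\alpha\subseteq G$ with $e_G\in[E_\alpha]_\alpha$ yet $e_G\notin[E_\alpha]_\beta$ for every $\beta<\alpha$. Take $E_0$ to be a tail of $\mathbf{u}$; for successors let $E_{\alpha+1}$ consist of products $xy$ with $x\in E_\alpha$ and $y$ drawn from a sufficiently fast-growing sub-tail of $\mathbf{u}$, calibrated so that reaching $e_G$ requires exactly one additional level of sequential limit; at limit ordinals, diagonalize over a cofinal sequence of predecessors. The verification that $e_G$ enters $[E_\alpha]_\alpha$ at level $\alpha$ but no earlier rests on the fact that, by the explicit description above, any sequence converging to $e_G$ in $\tau_\mathbf{u}$ must, up to finite adjustments, involve letters of $\mathbf{u}$ whose indices tend to infinity.

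For hemicompactness when $G$ is countable, enumerate $G=\{g_m\}_{m\in\omega}$ and observe that $E:=\mathbf{u}\cup\{e_G\}$ is compact metrizable in $\tau_\mathbf{u}$. Set $K_n:=\{g_1,\dots,g_n\}\cdot(E\cup E^{-1})^n$, a continuous image of a compact space and hence compact, with $G=\bigcup_n K_n$. Any compact $K\subseteq G$ sits inside some $K_n$: using the $V(f)$ to sparsify, a sequence in $K$ of unboundedly growing word length in $\mathbf{u}$ would have no convergent subsequence, so word lengths in $K$ are bounded and $K$ fits inside some $K_n$. The main obstacle throughout is the sequential order claim: arranging the transfinite construction so that $e_G$ enters $[E_\alpha]_\alpha$ at exactly the prescribed ordinal, rather than being pulled into the closure prematurely by accidental cancellations among deep-index letters, demands a delicate calibration of the tails used at each stage. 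By contrast, the sequentiality and hemicompactness assertions follow comparatively directly from the explicit neighborhood base once it is in hand.
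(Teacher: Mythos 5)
First, a point of reference: the paper does not prove this statement --- it is quoted from Zelenyuk--Protasov \cite{ZP2} (Theorems 2.3.1 and 2.3.10, Corollary 4.1.5 and Exercise 4.3.1). The closest internal analogue is Theorem \ref{t14}, which proves the corresponding facts for countable $T_s$-sets by exhibiting $(G,\tau_S)$ as the inductive limit of an increasing sequence of compact metrizable sets $X_n$; that $k_\omega$-decomposition is the intended engine behind every assertion of the present theorem.

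Your argument has a genuine gap at its central step. For sequentiality you ask for ``a pointwise-cofinal increasing sequence $f_1<f_2<\cdots$ in $\omega^\omega$'', but no countable family is cofinal in $\omega^\omega$ under (eventual or pointwise) domination: given $\{f_k\}$, the function $g(n)=f_n(n)+1$ is dominated by none of them. Correspondingly, the neighborhood base $\{V(f)\}$ --- in the paper's notation $\{SP_{n\in\omega}A_n(\mathbf{l})\}_{\mathbf{l}\in\mathcal{L}}$, where moreover in the non-Abelian case $\mathbf{l}$ must be a function of $\omega\times G$, not of $\omega$ alone --- has no countable cofinal subfamily; indeed $(G,\tau_\mathbf{u})$ cannot be first countable, since the theorem itself asserts sequential order $\omega_1$. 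So from witnesses $a_k\in V(f_k)\cap A$ you cannot conclude $a_k\to e_G$ (for an $f$ dominated by no $f_k$ the sequence need never enter $V(f)$), and sequential closedness of $A$ is never contradicted. The standard repair is structural, not diagonal: $\langle\mathbf{u}\rangle$ is open in $\tau_\mathbf{u}$ and is the inductive limit of the compact metrizable sets $X_n$ obtained by multiplying $n+1$ copies of $A_0^{\mathbf{u}}$; a sequentially closed set meets each metrizable compactum $X_n$ in a closed set and is therefore closed in the $k_\omega$-topology. The same structure is what your hemicompactness step is missing: the claim that a compact set has bounded word length is exactly the assertion that every compact set lies in some $X_n$, which one proves by noting that a set meeting infinitely many $X_{n+1}\setminus X_n$ is closed and discrete in the inductive limit topology --- your ``sparsify using the $V(f)$'' is again a diagonalization over an uncountably-cofinal index set. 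Finally, the sequential-order paragraph records what must be constructed but does not construct it: the verification that $e_G\notin[E_{\alpha+1}]_\alpha$, which you yourself flag as the main obstacle, is the entire content of $\mathrm{so}=\omega_1$, and it too rests on the characterization of convergent sequences (bounded word length with indices tending to infinity) that comes out of the $k_\omega$-decomposition rather than out of the neighborhood base alone.
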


For other non-trivial examples of sequential Hausdorff Abelian groups see \cite{ChMPT}.

Let $X$ and $Y$ be topological spaces. Following Siwiec \cite{Siw}, a continuous mapping $f: X\to Y$ is called {\it sequence-covering} if whenever $\{ y_n \}_{n\in\omega}$ is a sequence in $Y$ converging to a point $y\in Y$, there exists a sequence of points $ x_n \in f^{-1}(y_n)$ for $n\geq 1$ and $x\in f^{-1} (y)$ such that $x_n \to x$. It is clear that any sequence-covering mapping must be surjective. If $X$ and $Y$ are topological groups, a continuous homomorphism $p: X\to Y$ is sequence-covering if and only if it is surjective and for every sequence $\{ y_n \}$ converging to the unit $e_Y$ there is a sequence $\{ x_n \}$ converging to $e_X$ such that $p(x_n) = y_n$. A mapping $f: X\to Y$ is called {\it sequentially continuous} if $x_n\to x_0$ in $X$ implies that $f(x_n) \to f(x_0)$ in $Y$. Sequentially continuous mappings on products of topological spaces were considered by Mazur \cite{Maz}. The following important notion was introduced by Noble \cite{Nob}:
\begin{defin} {\rm \cite{Nob}} \label{ds}
A Hausdorff topological group $(G,\tau)$ is called an {\em $s$-group} if each sequentially continuous homomorphism from $(G,\tau)$ to a Hausdorff topological group is continuous.
\end{defin}
Let us note that this definition gives an {\it external} characterization of $s$-groups. Following \cite{Nob}, we
denote by $\mathfrak{s}$ the first cardinal such that there exists a noncontinuous, sequentially continuous
mapping $f : 2^\mathfrak{s} \to \mathbb{R}$. Mazur proved that $\mathfrak{s}$ is weakly inaccessible (i.e., $\mathfrak{s}$ is uncountable, regular, and a strong limit cardinal) \cite{Maz}. In particular, $\mathfrak{s} > \aleph_0$. The following theorem is proved by Noble \cite[Theorem 5.4]{Nob}, who changed the Mazur's factorization method by a stronger one:
\begin{theorem} {\rm \cite{Nob}} \label{Nob}
Every sequentially continuous homomorphism from a product of
less than $\mathfrak{s}$ $s$-groups into a Hausdorff group is continuous, i.e., the product is an $s$-group.
\end{theorem}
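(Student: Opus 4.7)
The plan is to argue by contradiction, following the Mazur--Noble factorization method. Suppose $G := \prod_{i \in I} G_i$ with $\kappa := |I| < \mathfrak{s}$ and each $G_i$ an $s$-group, and let $f: G \to H$ be a sequentially continuous homomorphism that fails to be continuous. Because $f$ is a homomorphism, failure of continuity reduces to failure at the identity: there is a neighborhood $V$ of $e_H$ such that no basic neighborhood $\prod_{i \in F} U_i \times \prod_{i \in I \setminus F} G_i$ of $e_G$ (with $F$ finite and $U_i \in \mathcal{U}_{G_i}$) is contained in $f^{-1}(V)$. The goal is to convert this failure into a sequentially continuous discontinuous map $2^\kappa \to \mathbb{R}$, contradicting the definition of $\mathfrak{s}$.

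Well-order $I = \{i_\alpha : \alpha < \kappa\}$ and construct, by transfinite recursion on $\alpha < \kappa$, a family $\{x^\alpha\}_{\alpha < \kappa} \subseteq G$ together with a refining sequence of neighborhood data so that at each stage $\alpha$: the element $x^\alpha$ lies in the basic neighborhood committed at earlier stages, has support controlled to lie ``beyond'' the coordinates already fixed, and satisfies $f(x^\alpha) \notin V$. Next, define $\varphi: 2^\kappa \to H$ by sending a characteristic function $\chi_A$ to $f\bigl(\prod_{\alpha \in A}^{\ast} x^\alpha\bigr)$ for a product operation $\prod^{\ast}$ that converges in $G$ thanks to the support control. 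The key verifications are then the following: any sequence $\{A_n\}$ converging to $\emptyset$ in $2^\kappa$ yields, via $\prod^{\ast}$, a sequence in $G$ converging to $e_G$, so sequential continuity of $f$ gives $\varphi(A_n) \to e_H$ and $\varphi$ is sequentially continuous at the zero element; meanwhile, every basic neighborhood of $\emptyset$ in $2^\kappa$ contains singletons $\{\alpha\}$ for $\alpha$ outside a finite mask, and $\varphi(\{\alpha\}) = f(x^\alpha) \notin V$, so $\varphi$ is discontinuous at $\emptyset$. Composing $\varphi$ with a continuous function $\psi: H \to [0,1]$ that separates $e_H$ from the closed set $H \setminus V$---available since every Hausdorff topological group is Tychonoff---gives a sequentially continuous discontinuous map $\psi \circ \varphi: 2^\kappa \to \mathbb{R}$, the required contradiction.

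The principal obstacle is arranging the transfinite construction so that the infinite product $\prod^{\ast}_{\alpha \in A} x^\alpha$ is well-defined in $G$ for every $A \subseteq \kappa$ and so that $\varphi$ is sequentially continuous, not merely along sequences used to witness discontinuity. This forces careful bookkeeping: the supports of the $x^\alpha$ must be sufficiently disjoint or, where they overlap, made sufficiently small by the refining neighborhoods. The $s$-group hypothesis on each individual factor $G_{i_\alpha}$ is used precisely at limit stages of the recursion to ensure that partial neighborhood data inherited from earlier stages can be refined into honest neighborhoods of $e_{G_{i_\alpha}}$, keeping the construction alive; the final descent from $H$-valued to $\mathbb{R}$-valued output is then routine Tychonoff separation.
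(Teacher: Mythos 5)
First, a point of reference: the paper does not prove Theorem \ref{Nob} at all. It is quoted from Noble's article (Theorem 5.4 there), and the author proves only the finite-product case (Theorem \ref{t04}) by an entirely different, internal argument using $T_s$-sets and the explicit neighborhood base of $\tau_S$. So your attempt can only be measured against Noble's original factorization proof, whose general shape (reduce a discontinuity of $f$ on the product to a sequentially continuous, discontinuous map $2^\kappa\to\mathbb{R}$ with $\kappa<\mathfrak{s}$) you have correctly identified.

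That said, there are two genuine gaps. The more serious one is that the $s$-group hypothesis on the factors is never used where it is actually needed. The heart of the argument is keeping the transfinite recursion alive: to choose $x^\alpha$ with $f(x^\alpha)\notin V$ and with support disjoint from the finitely many coordinates already committed, you must know that the discontinuity of $f$ cannot be concentrated on finitely many coordinates --- equivalently, that the restriction of $f$ to every finite subproduct $\prod_{i\in F}G_i\times\prod_{i\notin F}\{e_{G_i}\}$ is continuous. That is precisely where one invokes that each $G_i$ is an $s$-group (for a homomorphism, continuity on each factor of a finite subproduct gives continuity on the subproduct). Your sketch instead attributes the use of the hypothesis to ``refining partial neighborhood data at limit stages,'' which is not a property that $s$-groups confer; as written, nothing in your recursion prevents applying it to $f=f_0\circ\pi_0$ with $f_0$ a sequentially continuous discontinuous homomorphism on a single non-$s$-group factor, where the recursion in fact dies at the second step because no admissible $x^1$ exists. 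The second gap is the one you yourself flag as ``the principal obstacle'': $\varphi$ must be sequentially continuous at \emph{every} point of $2^\kappa$, not just at $\emptyset$, and since $\varphi$ is not a homomorphism of $(2^\kappa,\triangle)$ you cannot reduce to the identity by translation; the verification that $A_n\to A$ forces $\prod^{\ast}_{\alpha\in A_n}x^\alpha\to\prod^{\ast}_{\alpha\in A}x^\alpha$ in $G$ is the technical core of Mazur's and Noble's work and is left entirely undone here. Until both points are supplied, the proposal is a plan rather than a proof.
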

Hu\v{s}ek \cite{Hus} reproved  Noble's theorem \ref{Nob}. Other results and historical  remarks about $s$-groups can be found in \cite{AJP, Hus, Sha}.

{\bf II. Main results.} This article was inspired by the following two arguments. Firstly, it is natural to know the answer to the following  generalization of Problem \ref{prob1}:
\begin{problem} \label{prob2}
Let $G$ be a group and $S$ be a set of sequences in $G$. Is there a Hausdorff group topology $\tau$ on $G$ in which every sequence of $S$ converges to the unit $e_G$?
\end{problem}
Secondly, it is interesting to understand for which class of topological groups including all sequential groups we can obtain an analogue of the Franklin theorem \ref{t01}. Of course,  we cannot obtain all sequential groups as Hausdorff quotients of  metric ones.

By analogy with $T$-sequences we define:
\begin{defin} \label{d01}
{\it Let $G$ be a group and $S$ be a set of sequences in $G$. The set $S$ is called a {\em $T_s$-set of sequences} if there is a Hausdorff  group topology on $G$ in which all sequences of $S$ converge to $e_G$. The finest Hausdorff  group topology with this property is denoted by $\tau_S$. }
\end{defin}

The set of all $T_s$-sets  of sequences in a group $G$ we denote by $\mathcal{TS} (G)$.
It is clear that, if $S\in \mathcal{TS}(G)$, then $S' \in \mathcal{TS}(G)$  for every nonempty subset $S'$ of $S$ and every sequence $\mathbf{u}\in S$ is a $T$-sequence. Evidently, $\tau_S \subseteq \tau_{S'}$. Also, if $S$ contains only trivial sequences, then $S\in \mathcal{TS}(G)$ and $\tau_S$ is discrete.
By definition, $\tau_\mathbf{u}$ is finer than $\tau_S$ for every $\mathbf{u} \in S$. Thus, if $U$ is open in $\tau_S$, then it is open  in $\tau_\mathbf{u}$ for every $\mathbf{u}\in S$.
So, by definition, we obtain that $\tau_S \subseteq \bigwedge_{\mathbf{u}\in S} \tau_\mathbf{u}$, where $\bigwedge_{\mathbf{u}\in S} \tau_\mathbf{u}$ denotes the {\it intersection} of the topologies
$\tau_\mathbf{u}$ (i.e., $U$ is open in $\bigwedge_{\mathbf{u}\in S} \tau_\mathbf{u}$ if and only if $U\in \tau_\mathbf{u}$ for every $\mathbf{u}\in S$). Denote  by  $\inf \tau_\mathbf{u}$ the {\it infimum} of the topologies $\tau_\mathbf{u}$ in the lattice of all group topologies.
\begin{theorem} \label{char1}
A non-empty family $S$ of $T$-sequences on a group $G$ is a $T_s$-set if and only if
$\inf_{\mathbf{u}\in S} \tau_\mathbf{u}$ is Hausdorff. In such a case, $\tau_S = \inf_{\mathbf{u}\in S} \tau_\mathbf{u}$.
\end{theorem}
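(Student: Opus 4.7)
The plan is to establish $\tau_S = \inf_{\mathbf{u}\in S}\tau_\mathbf{u}$ under the Hausdorff hypothesis on the right-hand side, from two natural inclusions; the equivalence of the two conditions then drops out as a corollary. The guiding observation is that $\tau_\mathbf{u}$, being the \emph{finest} Hausdorff group topology in which $\mathbf{u}$ converges to $e_G$, dominates every Hausdorff group topology with that single-sequence property. First I would verify the inclusion $\tau_S \subseteq \inf_{\mathbf{u}\in S}\tau_\mathbf{u}$ whenever $\tau_S$ is defined: by definition $\tau_S$ is a Hausdorff group topology in which each individual $\mathbf{u}\in S$ converges to $e_G$, so the maximality of $\tau_\mathbf{u}$ forces $\tau_S\subseteq\tau_\mathbf{u}$ for every $\mathbf{u}\in S$, and the definition of infimum in the lattice of group topologies then yields $\tau_S\subseteq\inf_{\mathbf{u}\in S}\tau_\mathbf{u}$. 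From this alone the forward direction is immediate, since any group topology finer than the Hausdorff topology $\tau_S$ is itself Hausdorff.

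For the converse, set $\tau:=\inf_{\mathbf{u}\in S}\tau_\mathbf{u}$ and assume $\tau$ is Hausdorff. For each $\mathbf{u}\in S$ one has $\tau\subseteq\tau_\mathbf{u}$, and convergence of a sequence is preserved when passing to a coarser topology, so $u_n\to e_G$ in $\tau$. Hence $\tau$ is a Hausdorff group topology witnessing that $S$ is a $T_s$-set, and by maximality of $\tau_S$ we obtain $\tau\subseteq\tau_S$; combined with the first inclusion this gives $\tau=\tau_S$.

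The argument is almost entirely formal. The only point requiring care, and one I would flag rather than treat as a serious obstacle, is to keep the infimum in the lattice of group topologies distinct from the set-theoretic intersection $\bigwedge_{\mathbf{u}\in S}\tau_\mathbf{u}$; the former is automatically a group topology, which is precisely what lets the phrases ``$\mathbf{u}$ converges to $e_G$ in $\inf\tau_\mathbf{u}$'' and ``$\inf\tau_\mathbf{u}\subseteq\tau_S$'' carry their intended meaning and makes the maximality comparisons with $\tau_\mathbf{u}$ and $\tau_S$ legitimate.
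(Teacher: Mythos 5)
Your proof is correct and follows essentially the same route as the paper's: both directions rest on the maximality of each $\tau_\mathbf{u}$ among Hausdorff group topologies in which $\mathbf{u}$ converges, the preservation of convergence under coarsening, and the characterization of the infimum as the greatest lower bound in the lattice of group topologies. Your remark distinguishing $\inf_{\mathbf{u}\in S}\tau_\mathbf{u}$ from the intersection $\bigwedge_{\mathbf{u}\in S}\tau_\mathbf{u}$ is exactly the right point of care and matches the paper's own setup.
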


Let us note that Definition \ref{ds} of $s$-groups differs from the following one. Nevertheless, we shall show in Theorem \ref{charac} that both definitions define the same class of topological groups.
\begin{defin} \label{d02}
{\it A Hausdorff  topological group $(G, \tau)$ is called an {\em $s$-group} and the topology $\tau$ is called an {\em  $s$-topology} on $G$ if there is $S\in \mathcal{TS}(G)$ such that $\tau =\tau_S$. }
\end{defin}
In other words, $G$ is an $s$-group if and only if its topology is completely determined by convergent sequences.

The family of all $s$-groups (respectively all Abelian $s$-groups) is denoted by $\mathbf{S}$ (respectively by $\mathbf{SA})$. Evidently, if $H$ is an open subgroup of a topological group $G$ and if $H$ is an $s$-group, then $G$ is an $s$-group as well.

We shall show that our definition of $s$-groups has essential advantages than Noble's one. Firstly, Definition \ref{d02} is {\it internal} and does not use any external objects as homomorphisms
into other topological groups. Secondly, this internal characterization allows us to describe  a base $\mathcal{B}_G$ of $\mathcal{U}_G$ of an $s$-group $(G,\tau)$ and to solve
Problem \ref{prob2} (Section \ref{sec1}). The description of   $\mathcal{B}_G$ has been used repeatedly throughout the article. For instance,  we prove the equivalence of Definitions
\ref{ds} and \ref{d02} with the help of this description (Section \ref{sec2}). Thirdly, making use of the topology of $s$-groups important properties of $s$-groups are established
(Sections \ref{sec3}-\ref{sec4}). In particular, it is shown that every sequential group is an $s$-group. At last, we characterize $s$-groups as quotients of the Graev free
topological group over metrizable spaces (Section \ref{sec5}). This result can be viewed as a natural analogue of Franklin's theorem \ref{t01}. To prove these results is the mail goal of the article.

One of the most natural way to find $T_s$-sets of sequences is as follows. Let $(G, \tau)$ be a Hausdorff topological group. We denote the set of all  sequences of $(G, \tau)$ converging to the unit  by $S(G,\tau)$; that is,
\[
S(G,\tau) =\left\{ \mathbf{u} =\{ u_n \} \subset G : \; u_n \to e_G \mbox{ in } \tau \right\}.
\]
It is clear that $S(G,\tau)\in \mathcal{TS}(G)$ and $\tau \subseteq \tau_{S(G,\tau)}$.
Below (see Proposition \ref{p02}) we shall show that the set $S(G,\tau)$ defines the $s$-topology $\tau$.

The article is organized as follows.
In Section \ref{sec1}, Problem \ref{prob2} is solved. More precisely, we give a criterion for a subset $S$ of sequences in a group $G$ to be a $T_s$-set of sequences essentially generalizing Theorem 2.1.3 of \cite{ZP2}.

The next theorem is the main result of Section \ref{sec2}:
\begin{theorem} \label{charac}
A topological group $(G,\tau)$ is an $s$-group if and only if every sequentially continuous homomorphism from $(G,\tau)$ to a Hausdorff topological group $(X,\sigma)$ is continuous.
\end{theorem}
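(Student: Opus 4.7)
The plan is to prove the two implications of the equivalence separately; both arguments use essentially only the universal (maximality) property defining $\tau_S$, so neither Theorem \ref{p01} nor Theorem \ref{char1} is actually needed.

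For the easy direction (Noble's condition implies Definition \ref{d02}), I would set $S := S(G,\tau)$. Then $\tau$ itself is a Hausdorff group topology in which every element of $S$ converges to $e_G$, so $S \in \mathcal{TS}(G)$ and $\tau \subseteq \tau_S$. The identity map $\mathrm{id}\colon (G,\tau) \to (G,\tau_S)$ is a homomorphism, and it is sequentially continuous because every $\tau$-null sequence lies in $S$ and is therefore $\tau_S$-null by construction of $\tau_S$. Since $(G,\tau_S)$ is Hausdorff, the hypothesis of the theorem forces $\mathrm{id}$ to be continuous, which says precisely that $\tau_S \subseteq \tau$. Combining the two inclusions yields $\tau = \tau_S$, so $(G,\tau)$ is an $s$-group in the sense of Definition \ref{d02}.

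For the main direction (Definition \ref{d02} implies Noble), write $\tau = \tau_S$ for some $S \in \mathcal{TS}(G)$, and let $f\colon (G,\tau) \to (X,\sigma)$ be a sequentially continuous homomorphism into a Hausdorff topological group. Let $\tau_f := \{ f^{-1}(V) : V \in \sigma \}$ be the initial topology on $G$ pulled back from $\sigma$; since $f$ is a homomorphism and $\sigma$ is a group topology, $\tau_f$ is a (possibly non-Hausdorff) group topology on $G$. Form the join $\tau' := \tau \vee \tau_f$ in the lattice of topologies on $G$; this join of two group topologies is again a group topology, and it admits as a neighborhood base at $e_G$ the family of sets $U \cap f^{-1}(V)$, where $U$ ranges over $\tau$-neighborhoods of $e_G$ and $V$ over $\sigma$-neighborhoods of $e_X$. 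Since $\tau \subseteq \tau'$ and $\tau$ is Hausdorff, so is $\tau'$. The decisive observation is that every $\mathbf{u} = \{u_n\} \in S$ still converges to $e_G$ in $\tau'$: given any basic $\tau'$-neighborhood $U \cap f^{-1}(V)$ of $e_G$, one has $u_n \in U$ eventually because $u_n \to e_G$ in $\tau$, and $u_n \in f^{-1}(V)$ eventually because $f(u_n) \to e_X$ by sequential continuity of $f$. Thus $\tau'$ is a Hausdorff group topology in which all sequences of $S$ converge to $e_G$, and the maximality of $\tau_S$ forces $\tau' \subseteq \tau_S = \tau$. Since also $\tau_f \subseteq \tau' = \tau$, the map $f$ is continuous.

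The main obstacle is the verification that the join $\tau \vee \tau_f$ of two group topologies is itself a group topology admitting the claimed neighborhood base at $e_G$: one must check continuity of multiplication and inversion in $\tau'$, which follows by refining neighborhoods separately in $\tau$ and in $\tau_f$ and intersecting the refinements. Apart from this standard lattice-theoretic point, the argument is purely conceptual and exploits only the universal property that characterizes $\tau_S$.
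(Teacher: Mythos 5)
Your proof is correct, and it is leaner than the one in the paper. The core move in your main direction --- refining $\tau$ by the sets $U\cap f^{-1}(V)$ and invoking the maximality of $\tau_S$ --- is exactly the mechanism the paper uses in the step (ii) $\Rightarrow$ (i) of Theorem \ref{t12}. The difference is how convergence of the sequences of $S$ in the refined topology is verified: the paper routes this through Proposition \ref{p11} and the explicit neighborhood base $SP_{n\in\omega}A_n(\mathbf{j})$ of Theorem \ref{t11} (showing each $U\cap p^{-1}(W)$ is open in every $\tau_{\mathbf u}$ and then using $u_n\to e_G$ in $\tau_{\mathbf u}$), whereas you get it in one line from $u_n\to e_G$ in $\tau$ together with $f(u_n)\to e_X$. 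Consequently your argument uses only the universal property defining $\tau_S$ and never needs to know what $\tau_S$ looks like; it would in fact prove the stronger Theorem \ref{t13} as well, since you only use $f(u_n)\to e_X$ for $\mathbf{u}\in S$ rather than full sequential continuity. What the paper's longer route buys is the intermediate characterization (ii) of Theorem \ref{t12} in terms of $\bigwedge_{\mathbf{u}\in S}\tau_{\mathbf u}$ and Proposition \ref{p11} itself, both of which are reused elsewhere (e.g.\ in Proposition \ref{p02} and Theorem \ref{t02}). Your easy direction, via sequential continuity of the identity $(G,\tau)\to(G,\tau_S)$, is the same as the paper's. The only point you should make explicit is the standard verification that the filter base $\{U\cap f^{-1}(V)\}$ satisfies the group-topology axioms (which uses that $f$ is a homomorphism, e.g.\ $f^{-1}(V)^{-1}=f^{-1}(V^{-1})$ and $g\,f^{-1}(V)=f^{-1}(f(g)V)$), and that for a homomorphism sequential continuity reduces to preservation of null sequences; both are routine.
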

So Theorem \ref{charac} shows  that  Definitions \ref{ds} and \ref{d02} are equivalent.

In Section \ref{sec3} we consider some basic properties of $s$-groups. We show that
the class $\mathbf{S}$ closed under taking  quotients and is finitely multiplicative:
\begin{theorem} \label{t03}
{\it Let $S$ be a $T_s$-set of sequences in a group $G$, $H$ be a closed normal subgroup of $(G, \tau_S)$ and let $\pi$ be the natural projection from $G$ onto  the quotient group $G/H$. Then $\pi(S)$ is a $T_s$-set of sequences in $G/H$ and $G/H \cong (G/H, \tau_{\pi(S)})$.}
\end{theorem}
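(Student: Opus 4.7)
The plan is to let $\tau_q$ denote the quotient topology on $G/H$ induced by $\tau_S$ and to show that $\tau_q = \tau_{\pi(S)}$. This equality yields both claims: that $\pi(S) \in \mathcal{TS}(G/H)$ and that the quotient group is topologically isomorphic to $(G/H,\tau_{\pi(S)})$.

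\textbf{Easy inclusion.} First I would check that $\tau_q$ is Hausdorff (since $H$ is closed in $(G,\tau_S)$) and is a group topology. Because $\pi:(G,\tau_S)\to(G/H,\tau_q)$ is continuous and each $\mathbf{u}=\{u_n\}\in S$ satisfies $u_n\to e_G$ in $\tau_S$, we obtain $\pi(u_n)\to e_{G/H}$ in $\tau_q$. Hence $\pi(S)\in\mathcal{TS}(G/H)$, and by the very definition of $\tau_{\pi(S)}$ as the finest such Hausdorff group topology, $\tau_q\subseteq \tau_{\pi(S)}$.

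\textbf{Hard inclusion.} For the reverse inclusion the key trick is to lift an arbitrary admissible topology on $G/H$ back to $G$ and use maximality of $\tau_S$. Let $\sigma$ be any Hausdorff group topology on $G/H$ in which $\pi(\mathbf{u})\to e_{G/H}$ for every $\mathbf{u}\in S$; I would show $\sigma\subseteq\tau_q$. Consider on $G$ the supremum $\rho=\tau_S\vee\pi^{-1}(\sigma)$ in the lattice of group topologies, where $\pi^{-1}(\sigma)$ is the (generally non-Hausdorff) initial group topology induced by $\pi$. Then $\rho$ is a group topology, it is Hausdorff because $\rho\supseteq\tau_S$, and for every $\mathbf{u}=\{u_n\}\in S$ we have $u_n\to e_G$ in $\tau_S$ by definition and $u_n\to e_G$ in $\pi^{-1}(\sigma)$ since $\pi(u_n)\to e_{G/H}$ in $\sigma$; hence $u_n\to e_G$ in $\rho$. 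By the defining maximality of $\tau_S$ this forces $\rho\subseteq\tau_S$, i.e.\ $\rho=\tau_S$, so $\pi^{-1}(\sigma)\subseteq\tau_S$. Equivalently, $\pi:(G,\tau_S)\to(G/H,\sigma)$ is continuous, and the universal property of quotient topologies then gives $\sigma\subseteq\tau_q$.

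Taking $\sigma=\tau_{\pi(S)}$ in the hard inclusion gives $\tau_{\pi(S)}\subseteq\tau_q$, and combined with the easy inclusion we conclude $\tau_q=\tau_{\pi(S)}$. The statement of the theorem follows.

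\textbf{Anticipated obstacle.} The only nontrivial point is the verification that $\rho=\tau_S\vee\pi^{-1}(\sigma)$ is again a (Hausdorff) group topology and that sequences in $S$ still converge to the unit in $\rho$. This is standard once one writes down a neighborhood base at $e_G$ as finite intersections of elements of $\tau_S$-neighborhoods with sets $\pi^{-1}(V)$ for $V\in\sigma$, and convergence follows since both defining topologies make $u_n\to e_G$. Everything else is the standard interplay between the finest-topology definition of $\tau_S$ (and of $\tau_{\pi(S)}$) and the universal property of the quotient.
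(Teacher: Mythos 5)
Your proof is correct and follows essentially the same route as the paper: the paper's "topology generated by the sets $U\cap\pi^{-1}(W)$, $U\in\tau_S$, $W\in\tau_{\pi(S)}$" is exactly your supremum $\tau_S\vee\pi^{-1}(\sigma)$ with $\sigma=\tau_{\pi(S)}$, and both arguments conclude by observing that every sequence of $S$ still converges to $e_G$ in this refinement, contradicting (or, in your phrasing, invoking) the maximality of $\tau_S$. Your version is merely stated for an arbitrary admissible $\sigma$ rather than directly for $\tau_{\pi(S)}$, which changes nothing essential.
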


\begin{theorem} \label{t04}
{\it Let $(G, \tau)$ and $(H,\nu)$ be Hausdorff  topological groups. The following conditions are equivalent:
\begin{enumerate}
\item[{\rm (i)}] $(G, \tau)$ and $(H,\nu)$ are $s$-groups;
\item[{\rm (ii)}] The direct product $(G, \tau)\times (H,\nu)$ is an $s$-group.
\end{enumerate} }
\end{theorem}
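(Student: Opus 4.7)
The plan is to deduce Theorem \ref{t04} from two results already available in the paper, namely Theorem \ref{charac} (the equivalence of the internal and external definitions of an $s$-group) and Theorem \ref{t03} (stability of $\mathbf{S}$ under Hausdorff quotients).

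For the implication (ii) $\Rightarrow$ (i), I would invoke Theorem \ref{t03}. The projection $\pi_G:G\times H\to G$ is a continuous open surjective homomorphism with kernel $\{e_G\}\times H$; since $G$ is Hausdorff, $\{e_G\}$ is closed in $G$ and hence $\{e_G\}\times H$ is a closed normal subgroup of $G\times H$. Thus $G$ is topologically isomorphic to the quotient of the $s$-group $G\times H$ by a closed normal subgroup, and Theorem \ref{t03} delivers that $G\in\mathbf{S}$. The argument for $H$ is identical.

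For (i) $\Rightarrow$ (ii), I would use the external characterization provided by Theorem \ref{charac}. Let $f:G\times H\to(X,\sigma)$ be a sequentially continuous homomorphism into a Hausdorff topological group. Consider the canonical continuous embeddings $i_G:G\to G\times H$ and $i_H:H\to G\times H$ defined by $i_G(g)=(g,e_H)$ and $i_H(h)=(e_G,h)$. The compositions $f_1:=f\circ i_G$ and $f_2:=f\circ i_H$ are sequentially continuous homomorphisms from $G$ and $H$ respectively into $X$, so by Theorem \ref{charac} applied to the $s$-groups $(G,\tau)$ and $(H,\nu)$, both $f_1$ and $f_2$ are continuous. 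Writing $(g,h)=i_G(g)\cdot i_H(h)$ and using that $f$ is a homomorphism, we obtain
\[
f(g,h)=f_1(g)\cdot f_2(h),
\]
which exhibits $f$ as the composition of the continuous map $(g,h)\mapsto(f_1(g),f_2(h))$ with the continuous multiplication of $X$. Hence $f$ is continuous, and Theorem \ref{charac} yields that $(G,\tau)\times(H,\nu)$ is an $s$-group.

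The only nontrivial point is that direction (i) $\Rightarrow$ (ii) relies on Theorem \ref{charac}. A purely internal route would pick $T_s$-sets $S_1,S_2$ with $\tau=\tau_{S_1}$ and $\nu=\tau_{S_2}$ and try to show $\tau\times\nu=\tau_{i_G(S_1)\cup i_H(S_2)}$. The inclusion $\tau\times\nu\subseteq\tau_{i_G(S_1)\cup i_H(S_2)}$ is automatic, but verifying the reverse inclusion — that every $\tau_{i_G(S_1)\cup i_H(S_2)}$-neighborhood of $(e_G,e_H)$ contains a product of neighborhoods — would require an explicit description of the neighborhood base of $(G\times H,\tau_{i_G(S_1)\cup i_H(S_2)})$ generated by these sequences, and this is where the main technical obstacle lies. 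Appealing to Theorem \ref{charac} bypasses this combinatorial work entirely.
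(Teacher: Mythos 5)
Your proposal is correct, and the interesting direction (i) $\Rightarrow$ (ii) is genuinely different from the paper's argument. The paper's (ii) $\Rightarrow$ (i) is exactly yours: apply Theorem \ref{t03} to the coordinate projections. For (i) $\Rightarrow$ (ii), however, the paper stays entirely internal: it sets $S=S(G,\tau)$, $R=S(H,\nu)$, $T=S(G\times H,\tau\times\nu)$, proves $T=S\times R$ (Lemma \ref{l21}), shows $\tau_T|_G=\tau_S$ and $\tau_T|_H=\tau_R$, and then, given $U\in\tau_T$, picks $V$ with $V\cdot V\subseteq U$ and checks that $V_G\times V_H\subseteq U$ for $V_G=\pi_G(V\cap(G\times\{e_H\}))$ and $V_H=\pi_H(V\cap(\{e_G\}\times H))$ --- so it actually sidesteps the explicit $SP_{n\in\omega}A_n(\mathbf{j})$ combinatorics that you predicted would be the obstacle, and in the process identifies a concrete $T_s$-set ($S\times R$) generating the product topology, which is slightly more information than bare membership in $\mathbf{S}$. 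Your route instead goes through the external characterization of Theorem \ref{charac}: restricting a sequentially continuous homomorphism $f$ to the factors gives sequentially continuous, hence continuous, $f_1,f_2$, and $f(g,h)=f_1(g)\cdot f_2(h)$ recovers continuity of $f$. This is shorter and perfectly legitimate (Theorem \ref{charac} is established in Section \ref{sec2}, before Theorem \ref{t04}, with no circularity), but note that the author explicitly states the purpose of proving Theorem \ref{t04} is ``to demonstrate our (sequential) approach to $s$-groups,'' i.e.\ to avoid exactly the external detour you take; your argument is essentially the two-factor special case of the Noble-style reasoning the paper is deliberately not relying on here.
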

In spite of Noble's theorem \ref{Nob} is essentially stronger than Theorem \ref{t04}, we give the proof  of Theorem \ref{t04} to demonstrate our (sequential) approach to $s$-groups.


The next theorem shows that the class $\mathbf{S}$  contains the class $\mathbf{Seq}$ of all sequential Hausdorff groups:
\begin{theorem} \label{t05}
{\it If $(G, \tau)$ is a sequential Hausdorff group, then it is an $s$-group. More precisely, $\tau = \tau_{S(G,\tau)} =\bigwedge_{\mathbf{u}\in S(G,\tau)} \tau_\mathbf{u}$.}
\end{theorem}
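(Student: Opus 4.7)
The plan is to establish the circular chain of inclusions
$\tau \subseteq \tau_{S(G,\tau)} \subseteq \bigwedge_{\mathbf{u}\in S(G,\tau)}\tau_\mathbf{u} \subseteq \tau$,
which forces all three topologies to coincide. The first two inclusions are essentially formal; sequentiality of $(G,\tau)$ does all the work in the last one.

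For $\tau \subseteq \tau_{S(G,\tau)}$, I would observe that by the very definition of $S(G,\tau)$ every sequence in it converges to $e_G$ in the Hausdorff group topology $\tau$, so $\tau$ is one of the topologies competed over in Definition \ref{d01}; the maximality clause of that definition then yields $\tau\subseteq \tau_{S(G,\tau)}$. The inclusion $\tau_{S(G,\tau)}\subseteq \bigwedge_{\mathbf{u}}\tau_\mathbf{u}$ is already recorded in the paragraph immediately following Definition \ref{d01}, so nothing remains to be verified there.

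The substantive step is $\bigwedge_\mathbf{u}\tau_\mathbf{u}\subseteq \tau$. Let $U$ be open in every $\tau_\mathbf{u}$. Since $(G,\tau)$ is sequential, it is enough to check that $U$ is sequentially $\tau$-open. Take any $\tau$-convergent sequence $y_n\to x$ with $x\in U$, and set $\mathbf{v}=\{x^{-1}y_n\}$. By continuity of multiplication in $\tau$, the sequence $\mathbf{v}$ converges to $e_G$ in $\tau$, hence $\mathbf{v}\in S(G,\tau)$. By the defining property of $\tau_\mathbf{v}$, $x^{-1}y_n\to e_G$ in $\tau_\mathbf{v}$, and since $\tau_\mathbf{v}$ is a group topology, left translation by $x$ gives $y_n\to x$ in $\tau_\mathbf{v}$. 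The openness of $U$ in $\tau_\mathbf{v}$, together with $x\in U$, now forces $y_n\in U$ eventually.

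I do not anticipate any serious obstacle. The critical observation is that for each $\tau$-null sequence $\mathbf{v}$ the topology $\tau_\mathbf{v}$ is itself one of the intersected topologies, so the openness hypothesis can be invoked one sequence at a time against sequential convergence in $\tau$. Trivial choices of $\mathbf{v}$ (eventually $e_G$) cause no difficulty, because then $\tau_\mathbf{v}$ is discrete and the inclusion $y_n\in U$ is immediate. The two named equalities in the theorem statement follow at once from the closed chain.
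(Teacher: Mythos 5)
Your proof is correct. All three inclusions hold as you state them: $\tau \subseteq \tau_{S(G,\tau)}$ is the maximality clause of Definition \ref{d01}, $\tau_{S(G,\tau)} \subseteq \bigwedge_{\mathbf{u}}\tau_{\mathbf{u}}$ is recorded after that definition, and your translation argument for $\bigwedge_{\mathbf{u}}\tau_{\mathbf{u}} \subseteq \tau$ is sound (the passage from $x^{-1}y_n \to e_G$ in $\tau_{\mathbf{v}}$ to $y_n \to x$ in $\tau_{\mathbf{v}}$ is legitimate because $\tau_{\mathbf{v}}$ is a group topology, and you correctly note that $\mathbf{v}\in S(G,\tau)$ so that $U\in\tau_{\mathbf{v}}$ is available).

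The route differs from the paper's in how the chain is closed. The paper first proves $\tau=\tau_{S(G,\tau)}$ by passing through the sequential modification: it invokes Theorem \ref{t23}(i) (which in turn rests on Lemma \ref{l22}) to get $\tau^{seq}_{S(G,\tau)}=\tau^{seq}$, uses sequentiality to write $\tau=\tau^{seq}$, and closes with the trivial inclusion $\tau^{seq}_{S(G,\tau)}\supseteq\tau_{S(G,\tau)}$; the identification with $\bigwedge_{\mathbf{u}}\tau_{\mathbf{u}}$ is then delegated to Proposition \ref{p21}. You instead close the cycle through the intersection topology directly, and your substantive step is in essence a re-proof of the easy half of Proposition \ref{p21} (that every $U\in\bigwedge_{\mathbf{u}\in S(G,\tau)}\tau_{\mathbf{u}}$ is sequentially $\tau$-open), immediately combined with sequentiality of $\tau$. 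What your organization buys is self-containedness: Theorem \ref{t23} and Lemma \ref{l22} are not needed at all, and all three equalities drop out of a single cyclic chain. What the paper's organization buys is reusable intermediate statements ($\tau^{seq}=\bigwedge_{\mathbf{u}}\tau_{\mathbf{u}}$ for arbitrary Hausdorff groups, and $\tau^{seq}_{S(G,\tau)}=\tau^{seq}$), which it exploits again in Theorem \ref{t24}. The underlying observation --- playing the openness of $U$ in the single topology $\tau_{\mathbf{v}}$ attached to each $\tau$-null sequence $\mathbf{v}$ against sequential convergence --- is the same in both.
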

It is well known that a closed subgroup of a sequential group is sequential as well. The example in \cite[Theorem 6]{BaT} (see Example \ref{exa1} below) shows that $s$-groups may contain  closed subgroups which are neither  $s$-groups  nor   $k$-spaces. So the class $\mathbf{S}$ contains {\it properly} the class $\mathbf{Seq}$. This example shows also that the class $\mathbf{Seq}$ is not closed under taking finite products, contrarily to the class $\mathbf{S}$ (Theorem \ref{t04}).

Theorem \ref{t05} justifies the following:
\begin{defin}
{\it Let $(G,\tau)$ be a  Hausdorff  topological group. The group $(G, \tau_{S(G,\tau)})$ is called $\mathbf{s}$-{\em refinement} of $(G,\tau)$ and we denote it by $\mathbf{s}(G,\tau)$.}
\end{defin}
So, $(G,\tau)$ is an $s$-group if and only if $\mathbf{s}(G,\tau)=(G,\tau)$.

Let $(G, \tau)$ be an $s$-group. Then, by definition, there is $S\in \mathcal{TS}(G)$ such that $\tau =\tau_S$. It is natural to consider the following cardinal number:
\begin{defin}
{\it Let $(G,\tau)$ be an $s$-group. Set}
\[
r_s (G, \tau)= \min\left\{ |S| : \,  S \in \mathcal{TS} (G) \mbox{ and } \tau_S = \tau \right\}.
\]
\end{defin}
In other words, $r_s (G, \tau)$ is the minimal possible cardinality of those $T_s$-sets of sequences in the group $G$ which generate the $s$-topology $\tau$. In Section \ref{sec4} we prove the following:
\begin{theorem} \label{t001}
Let $(G, \tau)$ be a non-discrete $s$-group. If $r_s (G, \tau)< \omega_1$, then $(G, \tau)$ is sequential and ${\rm so } (G, \tau)=\omega_1$.
\end{theorem}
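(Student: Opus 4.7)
The hypothesis $r_s(G,\tau)<\omega_1$ furnishes a countable $T_s$-set $S=\{\mathbf{u}_i:i\in I\}$ with $\tau=\tau_S$. Trivial sequences do not affect $\tau_S$, so I may assume every $\mathbf{u}_i$ is non-trivial; at least one such $\mathbf{u}_i$ exists because $(G,\tau)$ is non-discrete. By Theorem \ref{char1}, $\tau=\inf_{i\in I}\tau_{\mathbf{u}_i}$ in the lattice of Hausdorff group topologies on $G$.

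To prove sequentiality, I would realize $(G,\tau)$ as a quotient of a Graev free group over a sequential space. Form the pointed space $X=\bigvee_{i\in I}(\mathbf{u}_i\cup\{e_G\})$, the countable wedge of the convergent sequences with limits identified at the basepoint $e_G$; if $\langle\bigcup_i\mathbf{u}_i\rangle\neq G$, wedge in a discrete set so that $X$ algebraically generates $G$. Then $X$ is a $k_\omega$-space, exhausted by the compact metric subspaces $K_n=\bigvee_{i\leq n}(\mathbf{u}_i\cup\{e_G\})$, hence sequential. The inclusion $X\hookrightarrow G$ is continuous into $(G,\tau)$ since each $\mathbf{u}_i\to e_G$, and the Graev universal property extends it to a continuous surjective homomorphism $\bar f:F(X)\to(G,\tau)$. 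The quotient topology on $F(X)/\ker\bar f$ is Hausdorff, makes every $\mathbf{u}_i$ converge to $e_G$, and is therefore coarser than $\tau$ by maximality of $\tau_S$; continuity of $\bar f$ gives the reverse inclusion. Thus $(G,\tau)\cong F(X)/\ker\bar f$ is a Hausdorff quotient of $F(X)$. Classical results on free topological groups over $k_\omega$-spaces show that $F(X)$ is itself a $k_\omega$-group and therefore sequential, so its quotient $(G,\tau)$ is sequential.

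For the sequential order, the bound $\mathrm{so}(G,\tau)\leq\omega_1$ is automatic. For the lower bound, fix any non-trivial $\mathbf{u}\in S$; Theorem \ref{p01} gives $\mathrm{so}(G,\tau_\mathbf{u})=\omega_1$. Since $\tau\subseteq\tau_\mathbf{u}$, the identity $(G,\tau_\mathbf{u})\to(G,\tau)$ is continuous, so every $\tau_\mathbf{u}$-convergent sequence is also $\tau$-convergent and, inductively, $[E]_\alpha^\tau\supseteq[E]_\alpha^{\tau_\mathbf{u}}$ for every $E\subseteq G$ and every ordinal $\alpha$. I would then exhibit, for each $\alpha<\omega_1$, a set $E\subseteq G$ with a point of $\mathrm{cl}_\tau(E)\setminus[E]_\alpha^\tau$, mirroring the Protasov-Zelenyuk transfinite construction used for Theorem \ref{p01} but applied to the explicit neighborhood base $\mathcal{B}_G$ of $e_G$ in $\tau_S$ described in Section \ref{sec1}.

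The main obstacle is this last step: sequential order is not monotone under coarsening of a topology, so the lower bound $\mathrm{so}(G,\tau)\geq\omega_1$ must be extracted from the combinatorial structure of $\tau_S$ directly rather than read off from $\tau_\mathbf{u}$. The available lever is that the base $\mathcal{B}_G$ at $e_G$ is still built from the countably many sequences of $S$ and so preserves the same complexity that forces $\mathrm{so}(\tau_\mathbf{u})=\omega_1$, ensuring that a transfinite chain of witness sets of length $\omega_1$ survives in $(G,\tau)$.
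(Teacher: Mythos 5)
Your route to the sequentiality half is genuinely different from the paper's: you realize $(G,\tau)$ as a quotient of a Graev free group over the wedge of the convergent sequences, whereas the paper proves directly (Theorem \ref{t14}) that for a countable $T_s$-set $S$ with $\langle S\rangle=G$ the group $(G,\tau_S)$ is the inductive limit of the compact metrizable sets $X_n=K_n\cdots K_n$, $K_n=\bigcup_{i\le n}A_0^{\mathbf{u}_i}$, hence a $k_\omega$-group, hence sequential. Your version is fine when $\bigcup_i\mathbf{u}_i$ generates $G$ (and the step ``$k_\omega$ therefore sequential'' should be justified by the metrizability of the compact pieces, which holds because $X$ is countable). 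But your fix for the general case --- wedging in a discrete set $D$ so that $X$ generates $G$ --- fails when $D$ must be uncountable: $X$ is then no longer $k_\omega$, and a Graev free (Abelian) topological group over a sequential, even Fr\'echet--Urysohn or metrizable, space need not be sequential; it can have uncountable tightness, as the paper notes after Theorem \ref{t06}. Indeed, if $F(X)$ were sequential for every sequential $X$, Theorem \ref{t06} would make every $s$-group sequential, contradicting Corollary \ref{cc1}(ii). The reduction the paper uses instead is that the subgroup generated by $S$ is \emph{open} in $(G,\tau)$, and a group with an open sequential subgroup is sequential.

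The genuine gap is the second conclusion, ${\rm so}(G,\tau)=\omega_1$, and you flag it yourself without closing it. The containment $[E]_\alpha^{\tau}\supseteq[E]_\alpha^{\tau_{\mathbf{u}}}$ points the wrong way: enlarging the supply of convergent sequences only makes the iterated sequential closures reach ${\rm cl}(E)$ faster, so it gives no lower bound on the sequential order, and the promised ``transfinite chain of witness sets'' is never constructed. The paper does not extract the lower bound from any monotonicity either; it obtains it from Theorem \ref{t14}(5), which in turn invokes Exercise 4.3.1 of \cite{ZP2}: a non-discrete group of the form $(G,\tau_S)$ with countable $S$ has sequential order exactly $\omega_1$. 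This is a specific structural fact about $k_\omega$-groups whose topology is determined by countably many sequences, generalizing Theorem \ref{p01}; without citing or reproving it, the equality ${\rm so}(G,\tau)=\omega_1$ remains unestablished in your proposal.
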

Let $(G,\tau)$ be a non-discrete Fr\'{e}chet-Urysohn group. Then $\mathrm{so}(G,\tau)=1$ and $(G,\tau)$ is an $s$-group by Theorem \ref{t05}. Thus Theorem \ref{t001} immediately yields:
\begin{cor} \label{c01}
If $(G, \tau)$ is a non-discrete Fr\'{e}chet-Urysohn (in particular, metrizable) group, then $r_s (G, \tau)\geq \omega_1$.
\end{cor}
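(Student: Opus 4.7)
The plan is to apply Theorem \ref{t001} contrapositively. First I would observe that a non-discrete Fr\'{e}chet-Urysohn group $(G,\tau)$ is in particular sequential (a Fr\'{e}chet-Urysohn space is exactly a sequential space with $\mathrm{so}=1$), so by Theorem \ref{t05} it lies in the class $\mathbf{S}$; in other words, the cardinal $r_s(G,\tau)$ is well-defined, since there actually exists a $T_s$-set of sequences generating $\tau$ (namely $S(G,\tau)$, or any subset of it produced by Theorem \ref{t05}).

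Next I would note that by the Fr\'{e}chet-Urysohn hypothesis one has $\mathrm{so}(G,\tau)=1$, which in particular is not equal to $\omega_1$. Now suppose toward a contradiction that $r_s(G,\tau)<\omega_1$. Then Theorem \ref{t001} applies and forces $\mathrm{so}(G,\tau)=\omega_1$, contradicting $\mathrm{so}(G,\tau)=1$. Hence $r_s(G,\tau)\geq \omega_1$, as required. The parenthetical metrizable case is immediate, since every metrizable space is Fr\'{e}chet-Urysohn.

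There is essentially no obstacle here; the content of the corollary is packaged into Theorem \ref{t001}, and the only things to verify are the two structural facts used above (that Fr\'{e}chet-Urysohn forces $\mathrm{so}=1$ and that sequential groups are $s$-groups), both of which are recorded earlier in the introduction. The only subtlety worth flagging is the initial existence check that $r_s(G,\tau)$ makes sense, i.e.\ that one really can realise $\tau$ as $\tau_S$ for some $T_s$-set $S$; this is supplied by Theorem \ref{t05}, which gives the explicit representation $\tau=\tau_{S(G,\tau)}$.
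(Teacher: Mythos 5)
Your proof is correct and follows exactly the paper's own route: the paper likewise notes that a non-discrete Fr\'{e}chet-Urysohn group has $\mathrm{so}(G,\tau)=1$ and is an $s$-group by Theorem \ref{t05}, and then derives the corollary immediately from Theorem \ref{t001}. Your extra remark about checking that $r_s(G,\tau)$ is well-defined is a reasonable point of care but adds nothing beyond what the paper already implicitly uses.
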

This means that the topology of a Fr\'{e}chet-Urysohn group cannot be described by a countable set of sequences converging to the unit.

Nyikos in \cite[Problem 4]{Nyi} asked whether the sequential order of a sequential topological group is $\omega_1$ if the group is not Fr\'{e}chet-Urysohn. Under the Continuum Hypothesis Shibakov \cite{Sh1, Sh2} gave a consistent negative answer to Nyikos' question by constructing a sequential topological group of any sequential order. Nevertheless, without any additional set-theoretic assumption beyond ZFC, the answer to Nyikos' question is still open. The difficulty of this question is explained by Theorem \ref{t001}: the topology of an $s$-group $(G,\tau)$ with $1< \mathrm{so}(G,\tau)<\omega_1$ cannot be determined by a countable $T_s$-set of sequences (that implies, in particular, major technical difficulties of constructing such sequential groups, see \cite{Sh1, Sh2}).

In Section \ref{sec5}  a characterization of $s$-groups is given. It turns out that we can describe $s$-groups as  quotients of   Graev free topological groups over  sequential  Tychonoff  spaces.  The following  theorem  is the main in the article:
\begin{theorem} \label{t06}
Let $(G,\tau)$ be a non-discrete Hausdorff (respectively Hausdorff Abelian) topological group. The following assertions are equivalent:
\begin{enumerate}
\item[{\rm (i)}]  $(G,\tau)$ is an $s$-group;
\item[{\rm (ii)}]  $(G,\tau)$ is a quotient of a Graev free (respectively Graev free  Abelian) topological group over a metrizable  space;
\item[{\rm (iii)}]  $(G,\tau)$ is a quotient of a Graev free (respectively  Graev free Abelian) topological group over a Fr\'{e}chet-Urysohn Tychonoff space;
\item[{\rm (iv)}]  $(G,\tau)$ is a quotient of a Graev free (respectively Graev free  Abelian) topological group over a sequential  Tychonoff  space.
\end{enumerate}
\end{theorem}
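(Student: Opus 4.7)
The trivial implications are (ii) $\Rightarrow$ (iii) $\Rightarrow$ (iv), since every metrizable space is Fr\'echet--Urysohn and every Fr\'echet--Urysohn space is sequential. Hence the substance lies in (iv) $\Rightarrow$ (i) and (i) $\Rightarrow$ (ii); both arguments work verbatim in the Abelian case with $A(X)$ in place of $F(X)$.

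For (iv) $\Rightarrow$ (i) the plan is to prove that the Graev free group $F(X)$ over any sequential Tychonoff space $X$ is itself an $s$-group; Theorem \ref{t03} then transfers this property to every topological quotient. By Theorem \ref{charac} it suffices to check that every sequentially continuous homomorphism $f: F(X) \to (H, \sigma)$ into a Hausdorff group is continuous. Since $i: X \to F(X)$ is continuous, $f \circ i: X \to H$ is sequentially continuous, hence continuous by sequentiality of $X$; it also sends the basepoint of $X$ to $e_H$. The universal property of $F(X)$ therefore produces a continuous homomorphism $\bar g: F(X) \to H$ extending $f \circ i$, and since $f$ and $\bar g$ are homomorphisms agreeing on the algebraic generating set $i(X)$, they coincide and $f$ is continuous.

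For (i) $\Rightarrow$ (ii) the plan is to exhibit an explicit metrizable $M$ and a continuous basepoint-preserving map $f: M \to G$ whose extension $\bar f: F(M) \to G$ is a topological quotient map. Fix $S \in \mathcal{TS}(G)$ with $\tau_S = \tau$. For each $\mathbf{u} = \{u_n\} \in S$ let $E_\mathbf{u} = \{(n, \mathbf{u}) : n \in \omega\} \cup \{*_\mathbf{u}\}$ be a metrizable copy of a convergent sequence with limit $*_\mathbf{u}$, and let $M$ be the topological sum $\bigsqcup_{\mathbf{u} \in S} E_\mathbf{u} \,\sqcup\, (G \setminus \{e_G\})$, where the elements of $G \setminus \{e_G\}$ are adjoined as isolated points and some $*_{\mathbf{u}_0}$ is chosen as basepoint. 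A topological sum of metric spaces of bounded diameter is metrizable (place distance $2$ between distinct summands), so $M$ is metrizable. Define $f: M \to G$ by $f((n, \mathbf{u})) = u_n$, $f(*_\mathbf{u}) = e_G$, and $f(g) = g$ on the isolated copy of $G \setminus \{e_G\}$; then $f$ is continuous and basepoint-preserving, and its image contains every element of $G$, so the induced continuous homomorphism $\bar f: F(M) \to G$ is surjective.

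The remaining step, which I expect to be the main obstacle, is to verify that $\bar f$ is a topological quotient map; the defining maximality of $\tau_S$ does the work. Let $\sigma$ denote the quotient topology transferred to $G$ from $F(M)/\ker \bar f$; then $\sigma$ is a Hausdorff group topology on $G$ (since $\ker \bar f$ is the preimage of $\{e_G\}$, hence closed in $F(M)$) and is finer than $\tau$ by continuity of $\bar f: F(M) \to (G, \tau)$. For every $\mathbf{u} \in S$ the sequence $\{(n, \mathbf{u})\}_n$ converges to $*_\mathbf{u}$ in $M$, so its $\bar f$-image $\mathbf{u}$ converges to $e_G$ in $\sigma$. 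Thus $\sigma$ is a Hausdorff group topology in which every sequence of $S$ converges to $e_G$, and the defining maximality of $\tau_S$ forces $\sigma \subseteq \tau_S = \tau$; combined with $\sigma \supseteq \tau$ this gives $\sigma = \tau$, completing (ii). The Abelian version runs identically with $A(M)$ in place of $F(M)$, noting that the maximality argument takes place inside the class of Hausdorff Abelian group topologies.
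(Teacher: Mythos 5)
Your proof is correct, but it takes a genuinely different route from the paper on both non-trivial implications. For (iv) $\Rightarrow$ (i), the paper's Proposition \ref{p31} works internally: it exhibits a concrete $T_s$-set $S$ of sequences in $F_2(X)$ and shows $\tau_F=\tau_S$ by using sequentiality of $X$ to produce a convergent sequence witnessing any failure of closedness; you instead invoke the external characterization (Theorem \ref{charac}) and the universal property of $F(X)$, reducing everything to the standard fact that a sequentially continuous map on a sequential space is continuous. For (i) $\Rightarrow$ (ii), the paper first proves (i) $\Rightarrow$ (iii) via the Fr\'{e}chet--Urysohn fan over the star $S^\ast(G,\tau)$ (Proposition \ref{p33}), verifying openness of the quotient map by hand using the explicit neighborhood base $SP_{n\in\omega}A_n(\mathbf{j})$ of Theorem \ref{t11}, and then gets (ii) from (iv) via Franklin's theorem \ref{t01} together with the fact that a quotient map of pointed spaces induces a quotient homomorphism of Graev free groups. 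You instead build a metrizable domain directly (a disjoint sum of convergent sequences indexed by $S$, plus a discrete copy of $G\setminus\{e_G\}$ to guarantee surjectivity) and verify openness abstractly: the quotient topology $\sigma$ on $G\cong F(M)/\ker\bar f$ is a Hausdorff group topology (the kernel is closed) in which every sequence of $S$ converges to $e_G$, so the defining maximality of $\tau_S$ forces $\sigma\subseteq\tau_S=\tau$, while continuity of $\bar f$ gives $\sigma\supseteq\tau$. This maximality trick is a clean shortcut that bypasses the combinatorics of Theorem \ref{t11}, Franklin's theorem, and the Arhangel'skii--Tkachenko quotient lemma; what the paper's longer route buys is extra information, namely an explicit Fr\'{e}chet--Urysohn preimage with a single non-isolated point and a quotient map that is moreover sequence-covering. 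Both of your arguments do transfer verbatim to the Abelian case, as you note.
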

In particular, assume that a metrizable space $X$ is such that the set of all its non-isolated points is not separable. Then the tightness  of the Graev free Abelian topological group over $X$ is uncountable \cite{AOP}. Hence there are $s$-groups whose tightness is uncountable.

The natural analogue of Problem \ref{prob1} for {\it precompact} group topologies on $\mathbb{Z}$ is studied by Raczkowski \cite{Rac}. Following \cite{BDM} and motivated by \cite{Rac}, we say that a sequence $\mathbf{u} =\{ u_n\}$ is a $TB$-{\it sequence} in a group $G$ if there is a {\it precompact} Hausdorff group topology on $G$ in which  $u_n \to e_G$.  The group $G$ equipped with the finest precompact Hausdorff group topology $\tau_{b\mathbf{u}}$ with this property is denoted by $(G, \tau_{b\mathbf{u}})$.

The counterparts of Problem \ref{prob2} and Definitions \ref{d01} and \ref{d02} for precompact group topologies are defined as follows:
\begin{problem} \label{prob22}
Let $G$ be a group and $S$ be a set of sequences in $G$. Is there a precompact Hausdorff group topology $\tau$ on $G$ in which every sequence of $S$ converges to the unit $e_G$?
\end{problem}

\begin{defin} \label{d012}
{\it Let $G$ be a group and $S$ be a set of sequences in $G$. The set $S$ is called a {\em $T_{bs}$-set of sequences} if there is a precompact Hausdorff group topology on $G$ in which all sequences of $S$ converge to $e_G$. The finest  precompact Hausdorff group topology with this property is denoted by $\tau_{bS}$. }
\end{defin}

The set of all  $T_{bs}$-sets of sequences in a group $G$ we denote by  $\mathcal{TBS} (G)$.
Clearly, if  $S\in \mathcal{TBS}(G)$, then  $S' \in \mathcal{TBS}(G)$ for every nonempty subset $S'$ of $S$, $\tau_{bS} \subseteq \tau_{bS'}$ and every sequence $\mathbf{u}\in S$ is a $TB$-sequence.
Evidently, if $S\in \mathcal{TBS}(G)$, then $S\in \mathcal{TS}(G)$ as well. But, in general, the converse is not true even for one $T$-sequence. For instance, there is a $T$-sequence $\mathbf{v}$ on $\mathbb{Z}$ such that the group $(\mathbb{Z}, \tau_\mathbf{v})$ has no non-trivial characters \cite{ZP1}, but every precompact group has sufficiently many continuous characters.

\begin{defin} \label{d022}
{\it A Hausdorff  topological group $(G, \tau)$ is called a  {\em $bs$-group} and the topology $\tau$ is called  a  {\em $bs$-topology} on $G$ if there is  $S\in \mathcal{TBS}(G)$ such that $\tau =\tau_S$. }
\end{defin}
In the article we deal with $T_s$-sets of sequences and $s$-topologies only.  The class $\mathbf{BS}$ of all  $bs$-groups and its connection with the class $\mathbf{S}$ will be considered in forthcoming articles.

In the last section we pose some open questions.


\section{A criterion to be a $T_s$-set of sequences} \label{sec1}

First of all we prove Theorem \ref{char1}:

{\it Proof of Theorem} \ref{char1}.
Let us prove that $\inf_{\mathbf{u}\in S} \tau_\mathbf{u}$ is the finest group topology on $G$ in which all sequences from $S$ converge to $e_G$. Indeed, since $u_n \to e_G$ in $\tau_\mathbf{u}$ for every $\mathbf{u} \in S$, obviously $u_n \to e_G$ in $\inf_{\mathbf{u}\in S} \tau_\mathbf{u}$. Conversely, if all $\mathbf{u} \in S$ converge to $e_G$ in some group topology $\tau$ on $G$, then
$\tau \leq \tau_\mathbf{u}$ for every $\mathbf{u} \in S$ by the definition of $\tau_\mathbf{u}$, and hence $\tau \leq \inf_{\mathbf{u}\in S} \tau_\mathbf{u}$.

Now let $S$ be a $T_s$-set of sequences in $G$. Then $\tau_S$ is Hausdroff, and by its definition, it coincides with $\inf_{\mathbf{u}\in S} \tau_\mathbf{u}$.
Conversely, if $\inf_{\mathbf{u}\in S} \tau_\mathbf{u}$ is Hausdorff, then obviously $S$ is a $T_s$-set and $\tau_S =\inf_{\mathbf{u}\in S} \tau_\mathbf{u}$.
$\Box$

Let $G$ be an infinite group and $S$ be a set of sequences in $G$. The main result of this section is Theorem \ref{t11}. This theorem  allows us to check whether $S$ is a $T_s$-set of sequences, and it gives an explicit description of a base of the topology $\tau_S$ assuming that $S\in \mathcal{TS}(G)$. Such a description was given only for Abelian case of one $T$-sequence in \cite{ZP1, ZP2} (see the case (iii) below). Let us note that Theorem \ref{t11} generalizes Theorems 2.1.3 and 3.1.4 of \cite{ZP2} and it is used repeatedly in the article.

Let $G$ be a group and $S=\{ \mathbf{u}_i\}_{i\in I}, \mathbf{u}_i =\{ u^i_n\}_{ n\in\omega}$, be a non-empty family of sequences in $G$. By $\mathcal{J}$ we denote the set of all functions $\mathbf{j}$ from $\omega \times I\times G$ into $\omega$ which satisfy the condition
\[
\mathbf{j}(k,i,g) < \mathbf{j}(k+1,i,g), \quad \forall k\in \omega , \; \forall i\in I, \; \forall g\in G.
\]
 For $\mathbf{j}\in \mathcal{J}$ one puts:
\[
\begin{split}
 A_{m}^i & :=\{ e_G, \left(u_m^i\right)^{\pm 1}, \left(u_{m+1}^i \right)^{\pm 1}, \dots  \}, \; \forall m\in \omega;\\
A_n (\mathbf{j}) & := \bigcup_{i\in I} \bigcup_{g\in G} g^{-1} A^i_{\mathbf{j}(n,i,g)} g, \; \forall n\in \omega; \\
SP_{n\in \omega} A_{n} (\mathbf{j}) & := \bigcup_{ n\in\omega} \left( \bigcup_{\sigma \in \mathbb{S}_{n+1}} A_{\sigma(0)}(\mathbf{j}) A_{\sigma(1)}(\mathbf{j})\cdots A_{\sigma(n)}(\mathbf{j})\right) .
\end{split}
\]
Note that $SP_{n\in \omega} A_{n} (\mathbf{j})$ is an increasing union over $n\in\omega$.

The next lemma is \cite[Lemma 3.1.1]{ZP2}, we prove it for the convenience of the readers.
\begin{lemma} \label{l11}
Let $U$ be an open neighborhood of the unit in a topological group $G$. Then there is a decreasing chain $\{ V_k \}_{k\in\omega} \subset \mathcal{U}_G $ such that
\begin{equation} \label{222}
\bigcup_{ n\in\omega} \bigcup_{\sigma \in \mathbb{S}_{n+1}} V_{\sigma(0)} V_{\sigma(1)}\cdots V_{\sigma(n)} \subseteq U.
\end{equation}
\end{lemma}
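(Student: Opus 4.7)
The plan is to construct the $V_k$'s by shifting an auxiliary chain satisfying the classical Kakutani-type cubic recurrence, and then to control arbitrary-order products via a Birkhoff--Kakutani style ``cost'' argument. First I would use continuity of multiplication and inversion to build, by induction on $k$, symmetric open neighborhoods $W_0 \supseteq W_1 \supseteq \cdots$ of $e_G$ with $W_0 \subseteq U$ and $W_{k+1}^3 \subseteq W_k$. Then I would set $V_k := W_{k+2}$. This is automatically a decreasing chain of symmetric open neighborhoods of $e_G$, and any product $V_{\sigma(0)}\cdots V_{\sigma(n)}$ equals $W_{\sigma(0)+2}\cdots W_{\sigma(n)+2}$, so every index appearing in such a product is at least $2$.

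The heart of the argument is the \emph{absorption lemma}: for $k\ge 1$, $m\ge 1$, and any indices $j_1,\dots,j_m\ge 1$ with $\sum_{i=1}^m 2^{-j_i} \le 2^{-k}$, one has $W_{j_1}W_{j_2}\cdots W_{j_m} \subseteq W_{k-1}$. I would prove this by strong induction on $m$. The base $m=1$ is immediate since $2^{-j_1}\le 2^{-k}$ forces $j_1\ge k$. For the inductive step, form the partial sums $S_p = \sum_{i\le p} 2^{-j_i}$. If $S_m\le 2^{-(k+1)}$, split the product in half: both halves have fewer than $m$ factors and cost at most $2^{-(k+1)}$, so by IH at level $k+1$ each lies in $W_k$, whence the whole product lies in $W_k^2\subseteq W_k^3\subseteq W_{k-1}$. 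Otherwise, let $p$ be the smallest index with $S_p>2^{-(k+1)}$; then the prefix of length $p-1$ has cost $\le 2^{-(k+1)}$ (by minimality of $p$), the suffix of length $m-p$ has cost $\le 2^{-k}-2^{-(k+1)} = 2^{-(k+1)}$, and the middle factor satisfies $2^{-j_p}\le S_p\le 2^{-k}$, so $W_{j_p}\subseteq W_k$. By IH (with the empty product interpreted as $\{e\}$), prefix and suffix lie in $W_k$, and the full product lies in $W_k^3\subseteq W_{k-1}$.

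Once the absorption lemma is in hand, the conclusion is straightforward: for any $n$ and any $\sigma\in\mathbb{S}_{n+1}$,
\[
\sum_{i=0}^n 2^{-(\sigma(i)+2)} \;=\; \sum_{j=0}^n 2^{-j-2} \;<\; \tfrac{1}{2} \;=\; 2^{-1},
\]
so applying the absorption lemma with $k=1$ gives $V_{\sigma(0)}\cdots V_{\sigma(n)}\subseteq W_0\subseteq U$, yielding the desired inclusion (\ref{222}).

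The principal obstacle is organizing the absorption bookkeeping so that a single cubic recurrence suffices uniformly in $n$ and $\sigma$: the exponent $3$ is precisely what is needed to absorb the three pieces (prefix, middle factor, suffix) into a single $W_{k-1}$, and the shift $V_k=W_{k+2}$ is chosen so that the total cost of every permutation product is bounded by $2^{-1}$, allowing the induction to close at $k=1$. A cleaner, but less self-contained, alternative would be to invoke the Weil construction of a left-invariant continuous pseudo-metric $d$ on $G$ with $\{x:d(e,x)<1\}\subseteq U$ and set $V_k=\{x:d(e,x)<2^{-k-2}\}$; the triangle inequality and left-invariance then give $d(e,x_0\cdots x_n)\le\sum_i d(e,x_i)<1/2$ directly.
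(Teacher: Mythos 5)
Your proof is correct, and it takes a genuinely different route from the paper's. The paper chooses $V_{-1}:=U$ and then $V_k$ with $V_k\cup V_kV_k\cup V_kV_kV_k\subseteq V_{k-1}$, and proves by induction on $n$ the statement: for any set of \emph{distinct} indices $\{i_0,\dots,i_n\}\subset\omega$ with minimum $i_l$, one has $V_{i_0}\cdots V_{i_n}\subseteq V_{i_l-1}$. The split is made at the unique occurrence of the minimal index; distinctness guarantees that the prefix and the suffix have all their indices at least $i_l+1$, so each is absorbed into $V_{i_l}$ by the inductive hypothesis, and the cubic recurrence finishes the step. Since $\sigma\in\mathbb{S}_{n+1}$ is a permutation, every product in (\ref{222}) has distinct indices with minimum $0$ and therefore lands in $V_{-1}=U$. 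Your absorption lemma is instead the classical Birkhoff--Kakutani chaining estimate: it replaces the minimum-index bookkeeping by the dyadic cost $\sum_i 2^{-j_i}$ and splits at the half-budget point. That version is strictly more general --- it tolerates repeated indices as long as the total cost is controlled, which is why it underlies metrization theorems --- at the price of the index shift $V_k=W_{k+2}$ and a three-case induction; for the lemma as stated, where the indices occurring in each product are automatically distinct, the paper's minimum-index argument is leaner. Your closing alternative via Weil's left-invariant continuous pseudo-metric is the same estimate packaged differently and would also suffice. One cosmetic note: the paper's lemma does not require the $V_k$ to be symmetric (symmetry is imposed only later, where the lemma is applied), so your insistence on symmetric $W_k$ is harmless but not needed here.
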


\begin{proof}
Set $V_{-1} :=U$.  For every $k\in \omega$ choose a $V_k \in \mathcal{U}_G$ such that $(V_k \cup V_k V_k \cup V_k V_k V_k ) \subseteq V_{k-1}$.

Let $\{ i_0, i_1, \dots , i_n \}$ be a subset of $\omega$, and let $i_l =\min\{ i_0, i_1, \dots , i_n \}$. It is enough to show that $V_{i_0} V_{i_1} \cdots  V_{i_n} \subseteq V_{i_l -1}$.  The proof is by induction over $n\in\omega$. For $n=0$ we have $i_l =i_0$ and $V_{i_0} \subseteq V_{i_0 -1}$ by the choice of the sequence $\{ V_k \}$. Let $n>0$. If $0<l<n$, then, by the inductive assumption,
\[
(V_{i_0} V_{i_1} \cdots   V_{i_{l -1}}) V_{i_{l}} (V_{i_{l +1}}  \cdots  V_{i_n}) \subseteq V_{i_l } V_{i_l } V_{i_l } \subseteq V_{i_l -1}.
\]
If either $l=0$ or $l=n$, then $V_{i_0} V_{i_1} \cdots  V_{i_n} \subseteq V_{i_{l}} V_{i_{l}} \subseteq V_{i_{l} -1}$.
\end{proof}

\begin{theorem} \label{t11}
Let $G$ be a  group and $S=\{ \mathbf{u}_i\}_{i\in I}, \mathbf{u}_i =\{ u^i_n\}_{ n\in\omega}$, be a non-empty family of sequences in $G$. The following statements are equivalent:
\begin{enumerate}
\item[$1)$] $S\in \mathcal{TS}(G)$;
\item[$2)$] $\bigcap_{\mathbf{j}\in \mathcal{J}} SP_{n\in \omega} A_{n} (\mathbf{j}) = \{ e_G \}$.
\end{enumerate}
If $1)-2)$ are fulfilled, the sets $SP_{n\in \omega} A_{n} (\mathbf{j}), \mathbf{j}\in \mathcal{J},$  form an open base at the unit of $\tau_S$.
\end{theorem}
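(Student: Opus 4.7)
The strategy is to build, from the sets $SP_{n\in\omega}A_n(\mathbf{j})$, a candidate topology $\tau^*$ on $G$ for which the family $\mathcal{B}:=\{SP_{n\in\omega}A_n(\mathbf{j}):\mathbf{j}\in\mathcal{J}\}$ is a neighborhood base at $e_G$, and then show: $(i)$ $\tau^*$ is always a (not necessarily Hausdorff) group topology; $(ii)$ each $\mathbf{u}_i$ converges to $e_G$ in $\tau^*$; and $(iii)$ any group topology on $G$ in which every $\mathbf{u}_i\to e_G$ is coarser than $\tau^*$. With these in hand, condition $2)$ is precisely the Hausdorffness of $\tau^*$, which, by $(ii)$--$(iii)$, is equivalent to the existence of any Hausdorff group topology in which all $\mathbf{u}_i$ converge, i.e., to $1)$; and in that case $(iii)$ forces $\tau^*=\tau_S$, so $\mathcal{B}$ is an open base of $\tau_S$ at the identity.

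For step $(i)$ the easy axioms come first. Each $SP_n A_n(\mathbf{j})$ contains $e_G$ and is symmetric since every $A_m^i$ is so. The family is directed downward because $\max\{\mathbf{j}_1,\mathbf{j}_2\}$, taken pointwise, lies in $\mathcal{J}$ and dominates both arguments. Conjugation invariance is handled by setting $\mathbf{j}''(k,i,g):=\mathbf{j}(k,i,gh^{-1})$; the substitution $g'=gh^{-1}$ in the defining union gives $hA_k(\mathbf{j}'')h^{-1}=A_k(\mathbf{j})$ and hence $h\cdot SP_n A_n(\mathbf{j}'')\cdot h^{-1}\subseteq SP_n A_n(\mathbf{j})$. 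Openness of an individual $SP_n A_n(\mathbf{j})$ at a general point $g=a_0\cdots a_N$ follows from a similar shift: $\mathbf{j}'(k,i,h):=\mathbf{j}(N+1+k,i,h)$ yields $g\cdot SP_n A_n(\mathbf{j}')\subseteq SP_n A_n(\mathbf{j})$, so $\mathcal{B}$ is in fact an open base.

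The genuinely delicate axiom is the ``square-root'' property: for each $\mathbf{j}\in\mathcal{J}$ I need $\mathbf{j}'\in\mathcal{J}$ with $SP_n A_n(\mathbf{j}')\cdot SP_n A_n(\mathbf{j}')\subseteq SP_n A_n(\mathbf{j})$. My plan is to take $\mathbf{j}'(k,i,g):=\mathbf{j}(2k+1,i,g)$, which lies in $\mathcal{J}$ by the strict monotonicity of $\mathbf{j}$ in its first slot and satisfies $A_k(\mathbf{j}')\subseteq A_{2k}(\mathbf{j})\cap A_{2k+1}(\mathbf{j})$. Given $x=a_0\cdots a_N$ and $y=b_0\cdots b_M$ in $SP_n A_n(\mathbf{j}')$, I first pad the shorter product by trailing $e_G$'s (legitimate since $e_G$ lies in every $A_k(\mathbf{j}')$) to arrange $N=M$. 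Then in $xy$ the $k$-th factor belongs to $A_{2\sigma(k)}(\mathbf{j})$ and the $(N+1+k)$-th factor to $A_{2\tau(k)+1}(\mathbf{j})$; since the even and odd values disjointly fill $\{0,1,\dots,2N+1\}$, the resulting index sequence is a permutation and $xy\in SP_n A_n(\mathbf{j})$. This ``interleave evens with odds'' trick is precisely why $SP_n A_n(\mathbf{j})$ is defined as a symmetric sum over $\mathbb{S}_{n+1}$ rather than as the ordered product $A_0(\mathbf{j})A_1(\mathbf{j})\cdots$, and I expect it to be the main obstacle in the verification.

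For step $(ii)$, taking $g=e_G$, as soon as $n\geq\mathbf{j}(0,i,e_G)$ one has $u_n^i\in A^i_{\mathbf{j}(0,i,e_G)}\subseteq A_0(\mathbf{j})\subseteq SP_n A_n(\mathbf{j})$, so $\mathbf{u}_i\to e_G$ in $\tau^*$. For step $(iii)$, let $\tau$ be any group topology with all $\mathbf{u}_i\to e_G$ and let $U\in\mathcal{U}_G(\tau)$. Lemma \ref{l11} furnishes a decreasing chain $\{V_k\}\subset\mathcal{U}_G(\tau)$ with $\bigcup_{n}\bigcup_{\sigma\in\mathbb{S}_{n+1}}V_{\sigma(0)}\cdots V_{\sigma(n)}\subseteq U$. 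Because conjugation is continuous in $\tau$, each sequence $g^{-1}u_n^i g$ also converges to $e_G$, so I can choose $\mathbf{j}(k,i,g)\in\omega$ inductively in $k$, for every fixed $(i,g)$, to be strictly increasing in $k$ and large enough that $g^{-1}A^i_{\mathbf{j}(k,i,g)}g\subseteq V_k$. Then $A_n(\mathbf{j})\subseteq V_n$, hence $SP_n A_n(\mathbf{j})\subseteq U$, proving $\tau\subseteq\tau^*$. Combining $(ii)$ and $(iii)$ yields $2)\Leftrightarrow\tau^*\text{ Hausdorff}\Leftrightarrow 1)$ and, in that case, $\tau^*=\tau_S$, closing the argument as outlined in the first paragraph.
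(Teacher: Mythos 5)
Your proof is correct and follows essentially the same route as the paper: verify the neighborhood-base axioms for the family $\{SP_{n\in\omega}A_n(\mathbf{j})\}$ directly (with the same even--odd interleaving $\mathbf{j}'(k,i,g)=\mathbf{j}(2k+1,i,g)$ for the product axiom and the same index-shift for translation and conjugation), and use Lemma \ref{l11} to show this topology refines every group topology in which the sequences of $S$ converge. The only differences are cosmetic: padding by trailing units reduces the product axiom to the equal-length case, avoiding the paper's three-way case analysis, and you make explicit that condition 2) is exactly Hausdorffness of the resulting topology.
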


\begin{proof}
$1)\Rightarrow 2)$ Let $U\in \mathcal{U}_{(G,\tau_S)}$. Since $\tau_S$ is Hausdorff, it is enough to show that there is $\mathbf{j}\in \mathcal{J}$ such that $SP_{n\in \omega} A_{n} (\mathbf{j}) \subseteq U$. Moreover, we shall prove that for any Hausdorff group topology $\tau'$ on $G$ in which every $\mathbf{u}\in S$ converges to $e_G$ and for every $U\in \mathcal{U}_{(G,\tau')}$ there is $\mathbf{j}\in \mathcal{J}$ such that $SP_{n\in \omega} A_{n} (\mathbf{j}) \subseteq U$. We construct such a $\mathbf{j}$ inductively. Choose a sequence $\{ V_k \}_{k\in \omega}$ of open symmetric neighborhoods of $e_G$ which satisfies condition (\ref{222}) of Lemma \ref{l11}.

Let $k=0$. For every $i\in I$ and every $g\in G$, since $u^i_n \to e_G$ in $\tau'$, we may choose $\mathbf{j}(0,i,g)$ such that $g^{-1} u^i_n g \in V_0$ for every $n\geq \mathbf{j}(0,i,g)$. Then
\[
A_0 (\mathbf{j})= \bigcup_{i\in I} \bigcup_{g\in G} g^{-1} A^i_{\mathbf{j}(0,i,g)} g \subseteq V_0.
\]

Let $k=1$. For every $i\in I$ and every $g\in G$, since $u^i_n \to e_G$ in $\tau'$, we may choose $\mathbf{j}(1,i,g) > \mathbf{j}(0,i,g)$ such that $g^{-1} u^i_n g \in V_1$ for every $n\geq \mathbf{j}(1,i,g)$. Then
\[
A_1 (\mathbf{j})= \bigcup_{i\in I} \bigcup_{g\in G} g^{-1} A^i_{\mathbf{j}(1,i,g)} g \subseteq V_1.
\]
And so on. Thus we can construct $\mathbf{j}\in \mathcal{J}$ such that, by (\ref{222}),
\[
SP_{n\in \omega} A_{n} (\mathbf{j})  = \bigcup_{ n\in\omega} \bigcup_{\sigma \in \mathbb{S}_{n+1}} A_{\sigma(0)}(\mathbf{j}) A_{\sigma(1)}(\mathbf{j})\cdots A_{\sigma(n)}(\mathbf{j}) 
   \subseteq \bigcup_{ n\in\omega} \bigcup_{\sigma \in \mathbb{S}_{n+1}} V_{\sigma(0)} V_{\sigma(1)}\cdots V_{\sigma(n)}  \subseteq U.
\]

$2)\Rightarrow 1)$ The family $\mathcal{U}$ of all sets of the form $SP_{n\in \omega} A_{n} (\mathbf{j}), \mathbf{j}\in \mathcal{J},$ forms a base of some Hausdorff group topology on $G$ if and only if $\mathcal{U}$ satisfies the following conditions \cite[Theorem 4.5]{HR1}:
\begin{enumerate}
\item[{\rm (a)}] every member of $\mathcal{U}$ contains $e_G$;
\item[{\rm (b)}] for every $U\in \mathcal{U}$ there is a $V\in \mathcal{U}$ such that $V^{-1} \subseteq U$;
\item[{\rm (c)}] for every $U\in \mathcal{U}$ there is a $V\in \mathcal{U}$ such that $V\cdot V \subseteq U$;
\item[{\rm (d)}] for every $U\in \mathcal{U}$ and $g\in U$ there is a $V\in \mathcal{U}$ such that $g\cdot V\subseteq U$;
\item[{\rm (e)}]  for every $U\in \mathcal{U}$ and $h\in G$ there is a $V\in \mathcal{U}$ such that $h^{-1} V h \subseteq U$;
\item[{\rm (f)}] for every $U,V\in \mathcal{U}$ there is a $W\in \mathcal{U}$ such that $W\subseteq U\cap V$.
\end{enumerate}
Let us check (a)-(f) for $\mathcal{U}$. Fix $SP_{n\in \omega} A_{n} (\mathbf{j})\in \mathcal{U}$. (a) and (b) are obvious.

(c)  Set
\[
\mathbf{j}_1 (r,i,g) := \mathbf{j}(2r+1,i,g), \; \forall r\in\omega, \forall i\in I, \forall g\in G.
\]
It is clear that $\mathbf{j}_1 \in \mathcal{J}$ and $A_r (\mathbf{j}_1)=A_{2r+1} (\mathbf{j})$ for every $r\in \omega$. Let $k,l\in \omega$. Fix arbitrary $\alpha\in \mathbb{S}_{k+1}$ and $\beta \in \mathbb{S}_{l+1}$.

Assume that $k<l$. Put
\[
\begin{split}
\sigma (r)= 2\alpha(r) +1, & \mbox{ if } \; 0\leq r \leq k;\\
\sigma (k+1+q)= 2\beta(q) , & \mbox{ if } \; 0\leq \beta(q) \leq k \mbox{ and } 0\leq q \leq l;\\
\sigma (k+1+q)= k+1+\beta(q) , & \mbox{ if } \; k < \beta(q) \leq l \mbox{ and } 0\leq q \leq l.
\end{split}
\]
Then $\sigma\in \mathbb{S}_{k+l+1}$. Since $A_{m+1} (\mathbf{j}) \subseteq A_m (\mathbf{j})$ for $m\in \omega$, for every $0\leq r \leq k$ and $0\leq q \leq l$ we have
\[
\begin{split}
A_{\alpha(r)} (\mathbf{j}_1) =A_{2\alpha(r)+1} (\mathbf{j})=A_{\sigma (r)} (\mathbf{j}), & \mbox{ if } \; 0\leq r \leq k;\\
A_{\beta(q)} (\mathbf{j}_1) =A_{2\beta(q)+1} (\mathbf{j})\subseteq A_{2\beta(q)} (\mathbf{j})=A_{\sigma (k+1+q)} (\mathbf{j}), & \mbox{ if } \; 0\leq \beta(q) \leq k;\\
A_{\beta(q)} (\mathbf{j}_1) =A_{2\beta(q)+1} (\mathbf{j})\subseteq A_{k+1+\beta(q)} (\mathbf{j})=A_{\sigma (k+1+q)} (\mathbf{j}), & \mbox{ if } \; k < \beta(q) \leq l.
\end{split}
\]
Hence
\[
A_{\alpha(0)} (\mathbf{j}_1) \cdots A_{\alpha(k)} (\mathbf{j}_1)\cdot A_{\beta(0)} (\mathbf{j}_1)\cdots A_{\beta(l)} (\mathbf{j}_1) \subseteq A_{\sigma (0)} (\mathbf{j})\cdots A_{\sigma (k+l+1)} (\mathbf{j}) \subseteq SP_{n\in \omega} A_{n} (\mathbf{j}).
\]

Analogously we consider the cases $k>l$ and $k=l$. By the definition of the sets $SP_{n\in \omega} A_{n} (\mathbf{j})$, we obtain
\[
SP_{n\in \omega} A_{n} (\mathbf{j_1}) \cdot SP_{n\in \omega} A_{n} (\mathbf{j_1}) \subseteq SP_{n\in \omega} A_{n} (\mathbf{j}).
\]

(d) Let $x\in  SP_{n\in \omega} A_{n} (\mathbf{j})$ and $k$ be the smallest index $n$ such that $x\in A_{\alpha(0)}(\mathbf{j}) A_{\alpha(1)}(\mathbf{j})\cdots A_{\alpha(n)}(\mathbf{j})$ for some $\alpha\in \mathbb{S}_{n+1}$. Set
\[
\mathbf{j}_2 (n,i,g) := \mathbf{j}(k +1+n,i,g), \; \forall n\in\omega, \forall i\in I, \forall g\in G.
\]
Then $\mathbf{j}_2 \in \mathcal{J}$. Let $l\in \omega$ and $\beta\in \mathbb{S}_{l+1}$ be arbitrary. Set
\[
\begin{split}
\sigma (r)= \alpha(r), & \mbox{ if } \; 0\leq r \leq k;\\
\sigma (k+1+r)= k+1+ \beta(r) , & \mbox{ if } \; 0\leq r \leq l.
\end{split}
\]
Then $\sigma\in \mathbb{S}_{k+l+1}$.  For every $0\leq r\leq l$, we have $A_{\beta(r)}(\mathbf{j}_2)= A_{k+1+\beta(r)}(\mathbf{j})= A_{\sigma (k+1+r)}(\mathbf{j})$. Thus
\[
x\cdot A_{\beta(0)}(\mathbf{j}_2) A_{\beta(1)}(\mathbf{j}_2)\cdots A_{\beta(l)}(\mathbf{j}_2)\subseteq  A_{\sigma (0)} (\mathbf{j})\cdots A_{\sigma (k+l+1)} (\mathbf{j}) \subseteq SP_{n\in \omega} A_{n} (\mathbf{j}).
\]
Hence $x\cdot SP_{n\in \omega} A_{n} (\mathbf{j}_2) \subseteq SP_{n\in \omega} A_{n} (\mathbf{j})$.

(e)  Set $\mathbf{j}_3 (r,i,g) := \mathbf{j}(r,i,gh),  \forall r\in\omega, \forall i\in I, \forall g\in G$. It is clear that $\mathbf{j}_3 \in \mathcal{J}$ and  for every $r\in \omega$ and each $i\in I$ we have
\[
\bigcup_{g\in G} g^{-1} A^i_{\mathbf{j}_3 (r,i,g)}g  =\bigcup_{g\in G} g^{-1} A^i_{\mathbf{j}(r,i,gh)} g 
   = \bigcup_{g\in G} h(gh)^{-1} A^i_{\mathbf{j}(r,i,gh)} gh \cdot h^{-1} = h\left( \bigcup_{g\in G} g^{-1} A^i_{\mathbf{j}(r,i,g)}g\right) h^{-1}.
\]
Thus $A_{r}(\mathbf{j}_3) = h A_{r}(\mathbf{j})h^{-1}$ and $h^{-1} SP_{n\in \omega} A_{n} (\mathbf{j}_3) h = SP_{n\in \omega} A_{n} (\mathbf{j})$.

(f) Let $SP_{n\in \omega} A_{n} (\mathbf{j}_4), SP_{n\in \omega} A_{n} (\mathbf{j}_5)\in \mathcal{U}$. Set
\[
\mathbf{j}_6 (r,i,g) := \max\{ \mathbf{j}_4 (r,i,g), \mathbf{j}_5 (r,i,g)\}, \; \forall r\in\omega, \forall i\in I, \forall g\in G.
\]
Then $\mathbf{j}_6 (r,i,g) \in \mathcal{J}$ and $A^i_{\mathbf{j}_6 (r,i,g)} \subseteq A^i_{\mathbf{j}_4 (r,i,g)} \cap A^i_{\mathbf{j}_5 (r,i,g)}$. Hence
$SP_{n\in \omega} A_{n} (\mathbf{j}_6) \subseteq SP_{n\in \omega} A_{n} (\mathbf{j}_4)\cap SP_{n\in \omega} A_{n} (\mathbf{j}_5)$. Thus $\mathcal{U}$ is an open basis at $e_G$ for some Hausdorff group topology.

In the first part of the proof we have shown that the topology generated by $\mathcal{U}$ is finer than an arbitrary Hausdorff group topology $\tau$ on $G$ in which every sequence of $S$ converges to $e_G$. Thus, by definition, $\mathcal{U}$ is an open basis at the unit of $\tau_S$.
\end{proof}

Now we give the explicit form of the open basis at the unit in Theorem \ref{t11} both for the Abelian case and for the case of one $T$-sequence. These forms will be applied in the sequel
(see Proposition \ref{p11} and Theorem \ref{t15}).

(i) {\it The Abelian general case.}
Let $G$ be an Abelian group and $S=\{ \mathbf{u}_i\}_{i\in I}, \mathbf{u}_i =\{ u^i_n\}_{n\in\omega}$, be a non-empty family of sequences in $G$. By $\mathcal{M}$ we denote the set of all functions $\mathbf{m}$ from $\omega \times I$ into $\omega$ which satisfy the condition
\[
\mathbf{m}(k,i) < \mathbf{m}(k+1,i), \quad \forall i\in I, \; \forall k\in \omega .
\]
 For $\mathbf{m}\in \mathcal{M}$ one puts
\[
\begin{split}
A^i_m & := \left\{ 0, \pm u^i_n , \; n\geq m \right\}, \quad \forall m\in \omega ; \\
A_k (\mathbf{m}) & := \bigcup_{i\in I} A^i_{\mathbf{m}(k,i)}, \quad \forall k\in \omega ; \\
\sum_k A_k (\mathbf{m}) & := \bigcup_{ k\in \omega } \left( A_0 (\mathbf{m}) + A_1 (\mathbf{m})+\dots + A_k (\mathbf{m}) \right).
\end{split}
\]
Let $\mathbf{j}\in \mathcal{I}$. For every $k\in\omega$ and $i\in I$ set $\mathbf{m}(k,i)=\min\{ \mathbf{j}(k,i,g): g\in G \}$. Clearly, $\mathbf{m}\in \mathcal{M}$. Since $G$ is Abelian we have
\[
\bigcup_{g\in G} g^{-1} A^i_{\mathbf{j}(k,i,g)} g = A^i_{\mathbf{m}(k,i)} \mbox{ and hence } SP_{n\in \omega} A_{n} (\mathbf{j}) = \sum_k A_k (\mathbf{m}).
\]
Thus Theorem \ref{t11} asserts that $S\in \mathcal{TS}(G)$ if and only if $\bigcap_{\mathbf{m}\in \mathcal{M}} \sum_k A_k (\mathbf{m}) =\{ 0\}$. In such a case, the sets  $\sum_k A_k (\mathbf{m}), \mathbf{m}\in \mathcal{M},$ form an open base at zero of $\tau_S$.

(ii) {\it The non-Abelian case of one $T$-sequence.}
Let $\mathbf{u}=\{ u_n\}_{n\in \omega }$ be an arbitrary sequence in a group $G$. In this case the set of indices $I$ contains only one element. So, for the sake of simplicity we drop the index $i$ and denote by $\mathcal{L}=\mathcal{L} (G)$  the set of all functions $\mathbf{l}$ from $\omega \times G$ into $\omega$ which satisfy the condition
\[
\mathbf{l}(k,g) < \mathbf{l}(k+1,g), \quad \forall k\in \omega ,  \; \forall g\in G.
\]
 For $\mathbf{l}\in \mathcal{L}$ one puts:
\[
\begin{split}
 A_{m}^\mathbf{u} = A_m & :=\{ e_G, u_m^{\pm 1}, u_{m+1}^{\pm 1}, \dots  \}, \; \forall m\in \omega ;\\
A_n (\mathbf{l}) & :=  \bigcup_{g\in G} g^{-1} A_{\mathbf{l}(n,g)} g, \; \forall n\in \omega ; \\
SP_{n\in \omega} A_{n} (\mathbf{l}) & := \bigcup_{ n\in\omega} \bigcup_{\sigma \in \mathbb{S}_{n+1}} A_{\sigma(0)}(\mathbf{l}) A_{\sigma(1)}(\mathbf{l})\cdots A_{\sigma(n)}(\mathbf{l}) .
\end{split}
\]
Then $\mathbf{u}$ is a $T$-sequence if and only if $\bigcap_{\mathbf{l}\in \mathcal{L}} SP_{n\in \omega} A_{n} (\mathbf{l}) =\{ e_G \}$. In such a case, the sets  $SP_{n\in \omega} A_{n} (\mathbf{l}), \mathbf{l}\in \mathcal{L},$ form an open base at $e_G$ of $\tau_\mathbf{u}$.

Note also that the subgroup $\langle\mathbf{u}\rangle$ is open in $(G,\tau_\mathbf{u})$.

(iii) {\it The Abelian case of one $T$-sequence.}
This case was already described explicitly in Theorem 2.1.3 of \cite{ZP1}.
Let $\mathbf{u}=\{ u_n\}_{n\in \omega }$ be an arbitrary sequence in an Abelian group $G$. Following \cite{ZP1}, for every $m\in \omega $ and an increasing sequence $0\leq j_0 < j_1 <\dots$ one puts
\[
\begin{split}
A_{m}^\mathbf{u} = A_{m} & :=\{ 0, \pm u_n :\; n\geqslant m \};\\
\sum_n A_{j_n} & := \bigcup_{n\in \omega } \left( A_{j_0} +A_{j_1} +\dots +A_{j_n} \right).
\end{split}
\]
If $\mathbf{u}$ is a $T$-sequence, then the sets of the form $\sum_n A_{j_n} $ form a base of symmetric neighborhoods at zero of $\tau_\mathbf{u}$ \cite{ZP1, ZP2}. In what follows we shall use the following sets defined in \cite{ZP1}: for every $m,k \in \omega $  one puts
\[
A(k,m)  := \underset{k+1}{\underbrace{A_{m} +\dots + A_{m}}}.
\]
Clearly, the sets of the form $A_{m}$ and $A(k,m)$ are compact in $\tau_\mathbf{u}$.

\section{A characterization of $s$-groups} \label{sec2}

In the following proposition we characterize continuous homomorphisms from $(G,\tau_\mathbf{u})$ into a  topological group.
\begin{pro} \label{p11}
Let $\mathbf{u}$ be a $T$-sequence in a group $G$ and $p$ be an homomorphism from $(G,\tau_\mathbf{u})$ into a  topological  group $(X,\tau)$. Then $p$ is continuous if and only if $p(u_n)\to e_X$ in $\tau$.
\end{pro}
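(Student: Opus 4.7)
The forward implication is immediate: since $u_n\to e_G$ in $\tau_{\mathbf{u}}$, continuity of $p$ gives $p(u_n)\to p(e_G)=e_X$. So the content is in the converse, and my plan is to use the explicit neighborhood base at $e_G$ described in case (ii) after Theorem \ref{t11}. Since $p$ is a homomorphism, it is enough to verify continuity at $e_G$, i.e., to show that for every $W\in\mathcal{U}_X$ the preimage $p^{-1}(W)$ contains some basic set $SP_{n\in\omega}A_n(\mathbf{l})$ with $\mathbf{l}\in\mathcal{L}(G)$.

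Given $W$, I would first invoke Lemma \ref{l11} inside $(X,\tau)$ to produce a decreasing chain $\{V_k\}_{k\in\omega}$ of symmetric open neighborhoods of $e_X$ (replacing each $V_k$ by $V_k\cap V_k^{-1}$ if necessary) such that
\[
\bigcup_{n\in\omega}\bigcup_{\sigma\in\mathbb{S}_{n+1}} V_{\sigma(0)}V_{\sigma(1)}\cdots V_{\sigma(n)}\subseteq W.
\]
Next, for each $g\in G$ I would build the values $\mathbf{l}(k,g)$ by induction on $k$. The key observation is that the hypothesis $p(u_n)\to e_X$ together with $p(g^{-1}u_n g)=p(g)^{-1}p(u_n)p(g)$ and the continuity of conjugation in $X$ give $p(g^{-1}u_n g)\to e_X$ for every fixed $g\in G$. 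Hence for every $k$ and $g$ some index $N_{k,g}$ exists with $p(g^{-1}u_n^{\pm 1}g)\in V_k$ for all $n\geq N_{k,g}$ (using symmetry of $V_k$ to handle the inverses, and noting $p(e_G)=e_X\in V_k$ trivially). I can then set $\mathbf{l}(0,g):=N_{0,g}$ and inductively pick $\mathbf{l}(k+1,g):=\max\{\mathbf{l}(k,g)+1,\, N_{k+1,g}\}$, producing an $\mathbf{l}\in\mathcal{L}$ with $p(g^{-1}A_{\mathbf{l}(k,g)}g)\subseteq V_k$ for all $k$ and $g$.

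Taking the union over $g\in G$ gives $p(A_k(\mathbf{l}))\subseteq V_k$ for every $k$, and the homomorphism property of $p$ then yields
\[
p\bigl(SP_{n\in\omega}A_n(\mathbf{l})\bigr)\subseteq\bigcup_{n\in\omega}\bigcup_{\sigma\in\mathbb{S}_{n+1}} V_{\sigma(0)}V_{\sigma(1)}\cdots V_{\sigma(n)}\subseteq W,
\]
so $SP_{n\in\omega}A_n(\mathbf{l})\subseteq p^{-1}(W)$. Since these sets form an open base at $e_G$ in $\tau_{\mathbf{u}}$ by Theorem \ref{t11}(ii), this proves $p^{-1}(W)\in\mathcal{U}_{(G,\tau_{\mathbf{u}})}$, and $p$ is continuous.

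There is essentially no real obstacle here beyond bookkeeping: the argument is just a single-sequence, non-Abelian reincarnation of the implication $1)\Rightarrow 2)$ already verified in the proof of Theorem \ref{t11}. The only point requiring mild care is keeping $\mathbf{l}(k,g)$ strictly increasing in $k$ while simultaneously shrinking the images into the $V_k$'s, which is handled by taking the maximum with $\mathbf{l}(k,g)+1$ at each inductive step. Symmetry of the $V_k$ and the definition $A_m=\{e_G,u_m^{\pm 1},u_{m+1}^{\pm 1},\dots\}$ match up automatically.
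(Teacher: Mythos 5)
Your proposal is correct and follows essentially the same route as the paper: reduce to continuity at the unit, take the symmetric chain $\{V_k\}$ from Lemma \ref{l11} applied in $X$, use $p(g^{-1}u_ng)=p(g)^{-1}p(u_n)p(g)\to e_X$ to build $\mathbf{l}\in\mathcal{L}$ with $p(A_k(\mathbf{l}))\subseteq V_k$, and conclude via the base description in case (ii) of Theorem \ref{t11}. The only difference is that you spell out the conjugation step and the strict-increase bookkeeping, which the paper leaves implicit.
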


\begin{proof}
The necessity is evident. Let us prove the sufficiency.
Let $U\in \mathcal{U}_{X}$. By Lemma \ref{l11} choose a decreasing chain  $\{ V_k \}_{k\in \omega }$ of open symmetric neighborhoods of $e_X$  satisfying the condition (\ref{222}).

Let $k=0$. Since $p(u_n) \to e_X$, for every $g\in G$  we may choose $\mathbf{l}(0,g)$ such that $p(g^{-1} u_n g) \in V_0$ for every $n\geq \mathbf{l}(0,g)$. Since $V_0$ is symmetric, then
\[
p\left( A_0 (\mathbf{l})\right) = p\left( \bigcup_{g\in G} g^{-1} A_{\mathbf{l}(0,g)} g \right) \subseteq V_0.
\]

Let $k=1$. Since $p(u_n) \to e_X$, for every $g\in G$  we may choose $\mathbf{l}(1,g) > \mathbf{l}(0,g)$ such that $p(g^{-1} u_n g) \in V_1$ for every $n\geq \mathbf{l}(1,g)$. Then
\[
p\left( A_1 (\mathbf{l})\right) = p\left( \bigcup_{g\in G} g^{-1} A_{\mathbf{l}(1,g)} g \right) \subseteq V_1.
\]
And so on. Thus we constructed $\mathbf{l}\in \mathcal{L}$ such that, by Lemma \ref{l11},
\[
p\left( SP_{n\in \omega} A_{n} (\mathbf{l})\right)  = p\left( \bigcup_{ n\in\omega} \bigcup_{\sigma \in \mathbb{S}_{n+1}} A_{\sigma(0)}(\mathbf{l}) A_{\sigma(1)}(\mathbf{l})\cdots A_{\sigma(n)}(\mathbf{l}) \right) 
   \subseteq \bigcup_{ n\in\omega} \bigcup_{\sigma \in \mathbb{S}_{n+1}} V_{\sigma(0)} V_{\sigma(1)}\cdots V_{\sigma(n)}  \subseteq U.
\]
By the particular case (ii) of Theorem \ref{t11},  $SP_{n\in \omega} A_{n} (\mathbf{l})\in \mathcal{U}_{(G,\tau_\mathbf{u})}$. Thus $p$ is continuous.
\end{proof}

The following theorem generalizes  Proposition \ref{p11} and gives a convenient criterion for automatical continuity of homomorphisms from  $s$-groups.
\begin{theorem} \label{t12}
Let $G$ be a group, $S\in \mathcal{TS}(G)$ and $p$ be a homomorphism from $(G,\tau_S)$ into a topological group $(X,\sigma)$. The following statements are equivalent:
\begin{enumerate}
\item[{\rm (i)}] $p$ is continuous;
\item[{\rm (ii)}]  for   any $W\in \mathcal{U}_X$ the set $p^{-1} (W)$ is open in $\tau_\mathbf{u}$ for every $\mathbf{u}\in S$, i.e., $p^{-1} (W) \in \bigwedge_{\mathbf{u}\in S} \tau_\mathbf{u}$;
\item[{\rm (iii)}]  $p(u_n)\to e_X$ for every $\mathbf{u}=\{ u_n\} \in S$.
\end{enumerate}
\end{theorem}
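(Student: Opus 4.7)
The plan is to prove (i)$\Rightarrow$(ii)$\Rightarrow$(iii)$\Rightarrow$(i). The first two implications are formal; all the work lies in (iii)$\Rightarrow$(i), which is a direct generalization of Proposition \ref{p11} from a single $T$-sequence to the whole $T_s$-set $S$, carried out using the explicit base at $e_G$ of $\tau_S$ furnished by Theorem \ref{t11}.

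For (i)$\Rightarrow$(ii): by Theorem \ref{char1}, $\tau_S = \inf_{\mathbf{u}\in S} \tau_\mathbf{u}$ is contained in $\bigwedge_{\mathbf{u}\in S} \tau_\mathbf{u}$, so any $\tau_S$-open preimage $p^{-1}(W)$ is automatically open in each $\tau_\mathbf{u}$. For (ii)$\Rightarrow$(iii): condition (ii) says precisely that $p : (G,\tau_\mathbf{u}) \to (X,\sigma)$ is continuous for every $\mathbf{u} \in S$, so the convergence $u_n \to e_G$ in $\tau_\mathbf{u}$ transports to $p(u_n) \to p(e_G) = e_X$.

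For the main implication (iii)$\Rightarrow$(i), fix an arbitrary $W \in \mathcal{U}_X$; by Theorem \ref{t11} it suffices to produce $\mathbf{j} \in \mathcal{J}$ with $p(SP_{n\in\omega} A_n(\mathbf{j})) \subseteq W$. Apply Lemma \ref{l11} to $W$ to obtain a decreasing chain $\{V_k\}_{k\in\omega}$ of symmetric open neighborhoods of $e_X$ for which $\bigcup_{n}\bigcup_{\sigma \in \mathbb{S}_{n+1}} V_{\sigma(0)}\cdots V_{\sigma(n)} \subseteq W$. Construct $\mathbf{j}$ by induction on $k \in \omega$: for each $(i,g) \in I \times G$, the hypothesis $p(u_n^i) \to e_X$ together with continuity of conjugation by $p(g)$ in the topological group $X$ yields $p(g^{-1}u_n^i g) = p(g)^{-1} p(u_n^i) p(g) \to e_X$, so one picks $\mathbf{j}(k,i,g) > \mathbf{j}(k-1,i,g)$ large enough that $p(g^{-1} A^i_{\mathbf{j}(k,i,g)} g) \subseteq V_k$. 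Unioning over $i \in I$ and $g \in G$ gives $p(A_k(\mathbf{j})) \subseteq V_k$, and the defining property of $\{V_k\}$ then forces $p(SP_{n\in\omega} A_n(\mathbf{j})) \subseteq W$, establishing continuity of $p$.

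The only mildly delicate point is the inductive choice of $\mathbf{j}(k,i,g)$ uniformly across the possibly uncountable parameter set $I \times G$ at each stage $k$; since there is no interaction between different pairs $(i,g)$ and only the monotonicity condition defining $\mathcal{J}$ must be preserved, the Axiom of Choice handles the simultaneous selection with no additional complication. The argument is thus a verbatim extension of the proof of Proposition \ref{p11}, with the special case (ii) following Theorem \ref{t11} replaced by its full statement.
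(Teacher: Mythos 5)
Your proposal is correct, but it closes the cycle of implications in the opposite direction from the paper and puts the work in a different place. The paper proves (i)$\Rightarrow$(iii)$\Rightarrow$(ii)$\Rightarrow$(i): the step (iii)$\Rightarrow$(ii) is just Proposition \ref{p11} applied to each single $T$-sequence $\mathbf{u}\in S$ separately, and the step (ii)$\Rightarrow$(i) is a soft maximality argument --- if $p$ were not continuous, the sets $U\cap p^{-1}(W)$ with $U\in\mathcal{U}_{(G,\tau_S)}$, $W\in\mathcal{U}_X$ would generate a Hausdorff group topology strictly finer than $\tau_S$ in which every $\mathbf{u}\in S$ still converges to $e_G$ (because each such set is $\tau_{\mathbf{u}}$-open), contradicting that $\tau_S$ is the finest such topology. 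You instead prove (iii)$\Rightarrow$(i) head-on by rerunning the construction of Proposition \ref{p11} over the full parameter set $\omega\times I\times G$ and exhibiting, for each $W\in\mathcal{U}_X$, a basic neighborhood $SP_{n\in\omega}A_n(\mathbf{j})$ from Theorem \ref{t11} with $p\left(SP_{n\in\omega}A_n(\mathbf{j})\right)\subseteq W$; your verification (symmetry of the $V_k$, conjugation by $p(g)$, monotonicity of $\mathbf{j}$, and the harmless appeal to choice) is sound, and continuity of the homomorphism at $e_G$ suffices. The trade-off: the paper's route reuses Proposition \ref{p11} as a black box per sequence and needs no further combinatorics, at the price of invoking the extremal definition of $\tau_S$; your route is more self-contained and quantitative (it in effect reproves Proposition \ref{p11} in full generality), at the price of repeating the base construction of Theorem \ref{t11}. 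Both are valid proofs of the equivalence.
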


\begin{proof}
(i) $\Rightarrow$ (iii) is trivial.

(iii) $\Rightarrow$ (ii). Let $p(u_n)\to e_X$ for every $\mathbf{u}=\{ u_n\} \in S$ and let $W\in \mathcal{U}_X$. By Proposition \ref{p11}, $p$ is a continuous homomorphism from $(G,\tau_\mathbf{u})$ into $X$ for every $\mathbf{u}\in S$. Thus  $p^{-1} (W)$ is open in $\tau_\mathbf{u}$ for every $\mathbf{u}\in S$.

(ii) $\Rightarrow$ (i).  We know that $\tau_S \subseteq \bigwedge_{\mathbf{u}\in S} \tau_\mathbf{u}$. So, assuming that $p$ is not continuous, a  family $\mathcal{U}_0 := \{ U\cap p^{-1} (W): U\in  \mathcal{U}_{(G,\tau_S)} \mbox{ and } W\in \mathcal{U}_X\}$ forms an open basis at $e_G$ of the Hausdorff group topology $\tau_0$ on $G$ that is strictly finer than $\tau_S$. To obtain a contradiction we have to show  that every sequence $\mathbf{u}\in S$ converges to the unit in $\tau_0$. By hypothesis, every set $U\cap p^{-1} (W)$ is open in $\tau_\mathbf{u}$. Since $\mathbf{u}$ converges to $e_G$ in $\tau_\mathbf{u}$,  $\mathbf{u}$ converges to the unit in $\tau_0$.
\end{proof}

The following proposition shows that we may restrict ourselves only to the sets of the form $S(G,\tau)$:
\begin{pro} \label{p02}
{\it Let $S$ be a $T_s$-set of sequences in a group $G$. Then $\tau_S =\tau_{S(G,\tau_S)}$. In particular, if $(G,\tau)$ is an $s$-group, then $\tau =\tau_{S(G,\tau)}$.}
\end{pro}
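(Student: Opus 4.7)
The plan is to exploit the monotonicity of $\tau_{(\cdot)}$ in the set of sequences (noted earlier in the excerpt as ``$S'\subseteq S$ implies $\tau_S\subseteq \tau_{S'}$'') together with the fact that $\tau_S$ is itself an admissible witness topology for the larger set $S(G,\tau_S)$. Both inclusions $\tau_S\subseteq\tau_{S(G,\tau_S)}$ and $\tau_{S(G,\tau_S)}\subseteq\tau_S$ then fall out of the definition of $\tau_{(\cdot)}$ as the \emph{finest} Hausdorff group topology making the prescribed sequences converge to $e_G$.

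First I would verify that $S(G,\tau_S)$ is indeed a $T_s$-set. By the very definition of $S(G,\tau_S)$, every sequence in this set converges to $e_G$ in the Hausdorff group topology $\tau_S$, so $S(G,\tau_S)\in\mathcal{TS}(G)$. Next I would observe that $S\subseteq S(G,\tau_S)$, because each $\mathbf{u}\in S$ converges to $e_G$ in $\tau_S$ by the defining property of $\tau_S$. Applying the monotonicity remark recalled above gives $\tau_{S(G,\tau_S)}\subseteq\tau_S$.

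For the reverse inclusion, I would note that $\tau_{S(G,\tau_S)}$ is, by Definition \ref{d01}, the \emph{finest} Hausdorff group topology in which every sequence of $S(G,\tau_S)$ converges to $e_G$. But $\tau_S$ itself is a Hausdorff group topology with precisely this property (again by the definition of $S(G,\tau_S)$). Hence $\tau_S\subseteq\tau_{S(G,\tau_S)}$, and combining the two inclusions yields $\tau_S=\tau_{S(G,\tau_S)}$.

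The ``in particular'' clause is then a one-line corollary: if $(G,\tau)$ is an $s$-group then by Definition \ref{d02} there exists $S\in\mathcal{TS}(G)$ with $\tau=\tau_S$, so $S(G,\tau)=S(G,\tau_S)$ and the first part of the proposition gives $\tau=\tau_S=\tau_{S(G,\tau_S)}=\tau_{S(G,\tau)}$. There is no real obstacle here; the only point requiring a moment of care is to make sure one does not confuse the two directions of the monotonicity of $\tau_{(\cdot)}$ in its indexing set (``more sequences'' forces ``fewer admissible topologies'', hence a \emph{coarser} supremum), which is why writing down $S\subseteq S(G,\tau_S)$ explicitly and quoting the inclusion ``$\tau_S\subseteq\tau_{S'}$ when $S'\subseteq S$'' in the right direction is the cleanest way to proceed.
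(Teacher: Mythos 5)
Your proof is correct. The inclusion $\tau_{S(G,\tau_S)}\subseteq\tau_S$ is obtained exactly as in the paper, from $S\subseteq S(G,\tau_S)$ and the monotonicity of $S\mapsto\tau_S$. For the reverse inclusion the paper takes a slightly different route: it applies Theorem \ref{t12} to the identity homomorphism $id\colon (G,\tau_{S(G,\tau_S)})\to(G,\tau_S)$, noting that $id$ sends every sequence of $S(G,\tau_S)$ to a sequence converging to $e_G$ in $\tau_S$, and concludes that $id$ is continuous, i.e.\ $\tau_S\subseteq\tau_{S(G,\tau_S)}$. You replace this by the purely definitional observation that $\tau_S$ is itself a Hausdorff group topology in which every member of $S(G,\tau_S)$ converges to $e_G$, so the finest such topology must refine it; this is precisely the inclusion $\tau\subseteq\tau_{S(G,\tau)}$ already recorded in the introduction, specialized to $\tau=\tau_S$. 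The two arguments are equivalent in substance --- the proof of Theorem \ref{t12} ultimately rests on the same maximality property of $\tau_S$ --- but yours is the more elementary one, needing nothing beyond Definition \ref{d01}, while the paper's version has the advantage of exhibiting the proposition as an instance of a reusable continuity criterion. The ``in particular'' clause is handled correctly.
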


\begin{proof}
Since $S\subseteq S(G,\tau_S)$, we have $\tau_S \supseteq \tau_{S(G,\tau_S)}$ by the definition of the topology $\tau_S$. Let $id: (G,\tau_{S(G,\tau_S)}) \to (G,\tau_S), id(g)=g,$ be the identity map. For every $\mathbf{u}=\{ u_n\} \in S(G,\tau_S)$, by the definition of $S(G,\tau_S)$, $id(u_n )=u_n \to e_G$ in $\tau_S$. By Theorem \ref{t12}, $id$ is continuous. Thus $\tau_S \subseteq \tau_{S(G,\tau_S)}$ and hence $\tau_S = \tau_{S(G,\tau_S)}$.
\end{proof}

As a corollary of the last theorem we obtain the following characterization of topologies of the form $\tau_S$:
\begin{theorem} \label{t13}
Let $(G,\tau)$ be a Hausdorff  topological group and $S$ be a set of sequences in $G$. The following statements are equivalent:
\begin{enumerate}
\item[{\rm (i)}]  $S\in \mathcal{TS}(G)$ and $\tau =\tau_S$;
\item[{\rm (ii)}] for every homomorphism $p$ from $(G,\tau )$ into an arbitrary Hausdorff topological group $(X,\sigma)$, $p$ is continuous if and only if $p(u_n )\to e_X$ for each $\{ u_n \} \in S$.
\end{enumerate}
\end{theorem}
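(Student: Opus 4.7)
The plan is to prove the two implications separately: (i) $\Rightarrow$ (ii) reduces almost immediately to Theorem \ref{t12}, while (ii) $\Rightarrow$ (i) is obtained by applying hypothesis (ii) to two carefully chosen identity homomorphisms in order to squeeze $\tau$ between $\tau_S$ and itself.

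For (i) $\Rightarrow$ (ii), I would assume $S\in\mathcal{TS}(G)$ and $\tau=\tau_S$, and let $p:(G,\tau_S)\to (X,\sigma)$ be a homomorphism. If $p$ is continuous, then $p(u_n)\to e_X$ for every $\{u_n\}\in S$ because each such sequence converges to $e_G$ in $\tau_S$ by the very definition of $\tau_S$. Conversely, if $p(u_n)\to e_X$ for every $\{u_n\}\in S$, then the implication (iii) $\Rightarrow$ (i) of Theorem \ref{t12} supplies the continuity of $p$.

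For (ii) $\Rightarrow$ (i), I would first apply hypothesis (ii) to the identity map $id:(G,\tau)\to(G,\tau)$. Since $id$ is trivially continuous, (ii) forces $u_n\to e_G$ in $\tau$ for every $\{u_n\}\in S$. Hence $\tau$ is itself a Hausdorff group topology on $G$ in which each $\mathbf{u}\in S$ converges to the unit, so $S\in\mathcal{TS}(G)$ and, by maximality of $\tau_S$, we obtain $\tau\subseteq\tau_S$. To get the reverse inclusion, I would apply (ii) to the identity $id:(G,\tau)\to(G,\tau_S)$: the target is Hausdorff and each $\{u_n\}\in S$ converges to $e_G$ in $\tau_S$ by construction, so hypothesis (ii) forces $id$ to be continuous, which gives $\tau_S\subseteq\tau$. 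Combining the two inclusions yields $\tau=\tau_S$, as required.

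There is no real obstacle here: the theorem is essentially a repackaging of Theorem \ref{t12}, and the only idea needed is the trick of running hypothesis (ii) against the target group $(G,\tau_S)$ itself, so that the external characterization in terms of arbitrary Hausdorff targets collapses onto the specific topology we want to identify.
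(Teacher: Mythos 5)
Your proposal is correct and follows essentially the same route as the paper: the implication (i) $\Rightarrow$ (ii) is deduced from Theorem \ref{t12}, and (ii) $\Rightarrow$ (i) is obtained exactly as in the paper by first applying the hypothesis to the identity map $id:(G,\tau)\to(G,\tau)$ to get $S\in\mathcal{TS}(G)$ and $\tau\subseteq\tau_S$, and then to $id:(G,\tau)\to(G,\tau_S)$ to get the reverse inclusion.
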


\begin{proof}
(i) $\Rightarrow$ (ii) follows from Theorem \ref{t12}.

(ii) $\Rightarrow$ (i). The identity map $id : (G,\tau)\to (G,\tau), id(g)=g,$ is continuous. Thus $u_n \to e_G$ in $\tau$ for each $\{ u_n \} \in S$. So $S\in S(G,\tau)$ and hence $S\in \mathcal{TS}(G)$. By the definition of the topology $\tau_S$ we have $\tau \subseteq \tau_S$. On the other hand, by hypothesis, the identity isomorphism  $id: (G,\tau)\to (G,\tau_S), id(g)=g,$ is continuous as well. So, $\tau \supseteq \tau_S$. Thus $\tau = \tau_S$.
\end{proof}

{\it Proof of Theorem} \ref{charac}.
Let $(G,\tau)$ be an $s$-group and $\tau =\tau_S$ for some $S\in \mathcal{TS}(G)$. Let $p: (G,\tau)\to (X,\sigma)$ be a sequentially continuous homomorphism. Then, in particular, $p(u_n )\to e_X$ for each $\{ u_n \} \in S$. Hence $p$ is continuous by Theorem \ref{t13}.

Conversely, let  every sequentially continuous homomorphism $p$ from $(G,\tau)$ into a Hausdorff topological group $(X,\sigma)$ be continuous. Setting $S=S(G,\tau)$ we obtain that $\tau =\tau_S$ by Theorem \ref{t13}.
$\Box$

\section{Basic properties of $s$-groups} \label{sec3}

We begin this section from the proof of Theorem \ref{t03}.

{\it Proof of Theorem} \ref{t03}.  Set $Q:=  \pi (S)$. We have to show that $G/X$ with the quotient topology is topologically isomorphic to $(G/X, \tau_Q)$. Since every sequence of $Q$ converges to the unit in the quotient topology $\tau$ on  $G/X$, $Q$ is a $T_s$-set of sequences in $G/X$ and $\tau$ is weaker than $\tau_Q$. If $\tau \not= \tau_Q$, we may find a strictly finer Hausdorff group topology $\tau'$ on $G$ in which every sequence $\mathbf{u}\in S$ converges to the unit, namely: the topology generated by the sets of the form $U\cap \pi^{-1} (W), U\in \tau_S, W\in \tau_Q$. This contradicts to the definition of the topology $\tau_S$.
$\Box$

\begin{lemma} \label{l21}
{\it Let $(G,\tau)$ and $(H,\nu)$ be  topological groups. Set $S=S(G,\tau)$, $R=S(H,\nu)$, $T= S(G\times H,\tau\times\nu)$ and
\[
S\times R :=  \left\{ \{ (u_n, v_n)\} :\; \mathbf{u}=\{ u_n \} \in S \mbox{ and } \mathbf{v}=\{ v_n \} \in R \right\}.
\]
Then $T=S\times R$. }
\end{lemma}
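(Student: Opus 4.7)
The plan is to reduce the equality to the standard characterization of convergence in a product topology, namely that $(u_n,v_n)\to(e_G,e_H)$ in $\tau\times\nu$ if and only if $u_n\to e_G$ in $\tau$ and $v_n\to e_H$ in $\nu$. Once that is in hand, both inclusions fall out directly from the very definitions of $S(G,\tau)$, $S(H,\nu)$, and $S(G\times H,\tau\times\nu)$.

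First I would prove $S\times R\subseteq T$. Fix $\mathbf{u}=\{u_n\}\in S$ and $\mathbf{v}=\{v_n\}\in R$, so $u_n\to e_G$ in $\tau$ and $v_n\to e_H$ in $\nu$. Given any basic open neighborhood of $(e_G,e_H)$ of the form $U\times V$ with $U\in\mathcal{U}_G$ and $V\in\mathcal{U}_H$, there are indices $N_1,N_2$ with $u_n\in U$ for $n\geq N_1$ and $v_n\in V$ for $n\geq N_2$, hence $(u_n,v_n)\in U\times V$ for $n\geq\max\{N_1,N_2\}$. This shows $(u_n,v_n)\to(e_G,e_H)$ in $\tau\times\nu$, so $\{(u_n,v_n)\}\in T$.

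For the reverse inclusion $T\subseteq S\times R$, take any $\{(u_n,v_n)\}\in T$. Apply the coordinate projections $\pi_G\colon G\times H\to G$ and $\pi_H\colon G\times H\to H$, which are (continuous and hence) sequentially continuous homomorphisms. From $(u_n,v_n)\to(e_G,e_H)$ we conclude $u_n=\pi_G(u_n,v_n)\to e_G$ in $\tau$ and $v_n=\pi_H(u_n,v_n)\to e_H$ in $\nu$, so $\mathbf{u}\in S$ and $\mathbf{v}\in R$, which is exactly $\{(u_n,v_n)\}\in S\times R$.

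There is no real obstacle here: the whole content of the lemma is the elementary fact that convergence in a product is coordinatewise. The lemma is presumably stated separately because it will be applied to identify $S(G\times H,\tau\times\nu)$ with $S(G,\tau)\times S(H,\nu)$ in the subsequent proof that $\mathbf{S}$ is finitely multiplicative (Theorem \ref{t04}), where the explicit combinatorial description of the sequence set on the product group is needed to compare $\tau_{S\times R}$ with $\tau\times\nu$ via Theorem \ref{t13}.
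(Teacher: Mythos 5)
Your proof is correct and is essentially the paper's own argument: the lemma reduces to the fact that convergence in $\tau\times\nu$ is coordinatewise, checked via the projections $\pi_G$ and $\pi_H$. You merely spell out the two inclusions separately, which the paper compresses into a single "if and only if" statement.
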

\begin{proof}
Denote by $\pi_G$ and $\pi_H$ the projections of $G\times H$ onto $G$ and $H$ respectively.
By the definition of the product topology we have: $(u_n, v_n)$ converges to the unit in $\tau_S \times\tau_R$ if and only if $\pi_G (u_n, v_n):=u_n \to e_G$ in $\tau_S$ and $\pi_H (u_n, v_n):=v_n \to e_H$ in $\tau_R$, i.e., if and only if $\mathbf{u}=\{ u_n \} \in S$  and $\mathbf{v}=\{ v_n \} \in R$.
\end{proof}

{\it Proof of Theorem} \ref{t04}. (i) $\Rightarrow$ (ii). Let $(G,\tau_S)$ and $(H,\tau_R)$ be $s$-groups, where, by  Proposition \ref{p02}, we may assume that $S=S(G,\tau_S)$ and $R= S(H,\tau_R)$. Set $T:= S(G\times H,\tau_S \times\tau_R)$. By Lemma \ref{l21}, $T=S\times R$.
 In particular, $\mathbf{u}= \{ u_n \}\in S$ if and only if $\{ (u_n, e_H)\} \in T$.

We claim that
\begin{equation} \label{21}
S\subseteq S(G,\tau_T|_G) \; \mbox{ and } \; R\subseteq S(H,\tau_T|_H).
\end{equation}
Let $\mathbf{u}= \{ u_n \} \in S$. Then $\{ (u_n, e_H)\} \in T$. So, for every open neighborhood $U$ of the unit in $\tau_T$ almost all members $(u_n, e_H)$ are contained in $U$ and, hence, in $U\cap (G\times \{ e_H\})$. Thus, $u_n \to e_G$ in $\tau_T |_G$ and $S\subseteq S(G,\tau_T|_G)$. Analogously, $R\subseteq S(H,\tau_T|_H).$

We claim that
\begin{equation} \label{22}
\tau_S = \tau_T|_G \; \mbox{ and } \; \tau_R = \tau_T|_H.
\end{equation}
Indeed, by the definition of $T$, $\tau_S \times\tau_R \subseteq \tau_T$. Thus, $\tau_S = \tau_S \times\tau_R
|_G \subseteq \tau_T|_G$. By the definition of the topology $\tau_S$ and (\ref{21}), we have $\tau_T|_G \subseteq \tau_S$. Thus, $\tau_T|_G =\tau_S$. Analogously, $\tau_T|_H =\tau_R$.

To prove that $\tau_S \times\tau_R = \tau_T$ we have to show that $\tau_S \times\tau_R \supseteq \tau_T$. Let $U$ be an arbitrary open neighborhood of the unit in $\tau_T$. Choose an open neighborhood of the unit $V$ in $\tau_T$ such that $V\cdot V \subseteq U$. For the natural projections $\pi_G$ and $\pi_H$ onto $G$ and $H$ respectively one puts
\[
V_G = \pi_G (V\cap (G\times \{ e_H\})) \mbox{ and } V_H = \pi_H (V\cap  (\{ e_G\}\times H)).
\]
By (\ref{22}), $V_G$ is open in $\tau_S$ and  $V_H$ is open in $\tau_R$. So $V_G \times V_H \in \tau_S \times \tau_R$ and
\[
V_G \times V_H \subseteq \left(V_G \times \{ e_H\}\right) \cdot \left(\{ e_G\} \times V_H \right) \subseteq V\cdot V \subseteq U.
\]
Thus, $\tau_S \times \tau_R =\tau_T$.

(ii) $\Rightarrow$ (i). By Theorem \ref{t03}, $G= \pi_G (G\times H)$ and $H= \pi_H (G\times H)$ are  $s$-groups.
$\Box$

\begin{lemma} \label{l22}
Let $(G,\tau)$ be a Hausdorff topological group. Then $S(G,\tau_{S(G,\tau)}) = S(G,\tau)$.
\end{lemma}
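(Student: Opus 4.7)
The plan is to prove the two inclusions $S(G,\tau)\subseteq S(G,\tau_{S(G,\tau)})$ and $S(G,\tau_{S(G,\tau)})\subseteq S(G,\tau)$ separately, both of which reduce immediately to the defining property of $\tau_S$.

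For the forward inclusion $S(G,\tau)\subseteq S(G,\tau_{S(G,\tau)})$, I would simply invoke the definition of $\tau_{S(G,\tau)}$: setting $S:=S(G,\tau)$, the topology $\tau_S$ is, by Definition \ref{d01}, a Hausdorff group topology in which every sequence of $S$ converges to $e_G$. Hence each $\mathbf{u}\in S(G,\tau)$ is automatically a $\tau_{S(G,\tau)}$-null sequence, which is exactly what it means to lie in $S(G,\tau_{S(G,\tau)})$.

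For the reverse inclusion, the key observation is the comparison $\tau\subseteq\tau_{S(G,\tau)}$. This holds because $\tau$ is itself a Hausdorff group topology on $G$ in which every sequence of $S(G,\tau)$ converges to $e_G$ (tautologically, by the very definition of $S(G,\tau)$), so by the maximality built into Definition \ref{d01}, $\tau$ must be coarser than $\tau_{S(G,\tau)}$. Now if $\{u_n\}\in S(G,\tau_{S(G,\tau)})$ and $U\in\mathcal{U}_{(G,\tau)}$ is any neighborhood of $e_G$ in $\tau$, then $U\in\tau\subseteq\tau_{S(G,\tau)}$, so $U$ is a $\tau_{S(G,\tau)}$-neighborhood of $e_G$ and almost all $u_n$ lie in $U$. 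Therefore $u_n\to e_G$ in $\tau$, i.e. $\{u_n\}\in S(G,\tau)$.

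Combining the two inclusions yields the claimed equality. There is no real obstacle here: the whole content of the lemma is that passing from $\tau$ to $\tau_{S(G,\tau)}$ is a genuine refinement procedure which does not create or destroy null sequences, and both halves follow directly from the extremal characterization of $\tau_S$ as the finest Hausdorff group topology having the prescribed convergent sequences.
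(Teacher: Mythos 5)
Your proof is correct and follows essentially the same route as the paper: the forward inclusion comes from the defining property of $\tau_{S(G,\tau)}$, and the reverse inclusion from the comparison $\tau\subseteq\tau_{S(G,\tau)}$, which makes every $\tau_{S(G,\tau)}$-null sequence a $\tau$-null sequence. No issues.
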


\begin{proof}
Since $\tau \subseteq \tau_{S(G,\tau)}$, then $S(G,\tau_{S(G,\tau)}) \subseteq S(G,\tau)$. Conversely, if $\mathbf{u}\in S(G,\tau)$, then $\mathbf{u}\in S(G,\tau_{S(G,\tau)})$ by the definition of $\tau_{ S(G,\tau)}$.
\end{proof}

We recall that the {\it sequential modification} $X^{seq}$ of a topological space $(X,\tau)$ is the set $X$ with the new topology $\tau^{seq}$ such that $U$ is open if and only if $U$ is sequentially open in $(X,\tau)$. In the case of Hausdorff  topological groups we can describe  $\tau^{seq}$ as follows:
\begin{pro} \label{p21}
{\it Let $(G,\tau)$ be a Hausdorff topological group. Then $\tau^{seq} = \bigwedge_{\mathbf{u}\in S(G,\tau)} \tau_\mathbf{u}$. }
\end{pro}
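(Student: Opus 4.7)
The plan is to prove the two inclusions separately, using the fact that $\tau \subseteq \tau_\mathbf{u}$ for every $\mathbf{u} \in S(G,\tau)$ (which holds because $\tau$ itself is a Hausdorff group topology in which $\mathbf{u}$ converges to $e_G$, and $\tau_\mathbf{u}$ is the finest such) combined with Theorem \ref{p01}, which tells us $(G,\tau_\mathbf{u})$ is a sequential space whenever $\mathbf{u}$ is non-trivial.

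For the inclusion $\tau^{seq} \subseteq \bigwedge_{\mathbf{u}\in S(G,\tau)} \tau_\mathbf{u}$, I would fix a sequentially open set $U$ in $(G,\tau)$ and a sequence $\mathbf{u}\in S(G,\tau)$, and show $U\in \tau_\mathbf{u}$. If $\mathbf{u}$ is trivial this is immediate since $\tau_\mathbf{u}$ is discrete. Otherwise, because $\tau \subseteq \tau_\mathbf{u}$, every sequence that converges in $\tau_\mathbf{u}$ also converges in $\tau$ to the same limit; hence any set that is sequentially open in $\tau$ is sequentially open in $\tau_\mathbf{u}$. By Theorem \ref{p01}, $(G,\tau_\mathbf{u})$ is sequential, so ``sequentially open'' coincides with ``open'' and we conclude $U\in \tau_\mathbf{u}$.

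For the reverse inclusion $\bigwedge_{\mathbf{u}\in S(G,\tau)} \tau_\mathbf{u} \subseteq \tau^{seq}$, I would fix $U$ open in every $\tau_\mathbf{u}$ with $\mathbf{u}\in S(G,\tau)$, and check that $U$ is sequentially open in $\tau$. Suppose $\{u_n\}$ is a sequence in $G$ converging in $\tau$ to a point $u\in U$. Then the translated sequence $\mathbf{v}=\{u^{-1}u_n\}$ converges to $e_G$ in $\tau$, so $\mathbf{v}\in S(G,\tau)$. By hypothesis $U\in \tau_\mathbf{v}$; since $\tau_\mathbf{v}$ is a group topology, left translation is a homeomorphism, so $u^{-1}U$ is a $\tau_\mathbf{v}$-neighborhood of $e_G$. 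By definition of $\tau_\mathbf{v}$ the sequence $u^{-1}u_n$ converges to $e_G$ in $\tau_\mathbf{v}$, so almost all $u^{-1}u_n$ lie in $u^{-1}U$, that is, almost all $u_n$ lie in $U$. This shows $U\in \tau^{seq}$.

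The main conceptual point (rather than an obstacle) is the use of the translation trick in the second inclusion: the set $S(G,\tau)$ only contains sequences converging to $e_G$, so to exploit the hypothesis for an arbitrary convergent sequence we must shift its limit to the identity, and this is exactly what allows the group structure of each $\tau_\mathbf{u}$ to enter the argument. Everything else reduces to Theorem \ref{p01} and the monotonicity $\tau \subseteq \tau_\mathbf{u}$.
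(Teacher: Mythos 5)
Your proof is correct and follows essentially the same route as the paper: both inclusions are handled exactly as in the paper's argument, using $\tau\subseteq\tau_{\mathbf{u}}$ together with the sequentiality of $(G,\tau_{\mathbf{u}})$ from Theorem \ref{p01} for one direction, and reduction to a sequence converging to $e_G$ for the other. The only difference is that you spell out the translation trick and the trivial-sequence case explicitly, where the paper compresses these into ``we may assume $g=e_G$.''
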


\begin{proof}
Let $U\in \bigwedge_{\mathbf{u}\in S(G,\tau)} \tau_\mathbf{u}$. We have to show that $U$ is sequentially open in $\tau$. Let $u_n \to g \in U$ in $\tau$. We may assume that $g=e_G$. Then $\mathbf{u}=\{ u_n\} \in S(G,\tau)$. Since $U\in \tau_\mathbf{u}$ and $\mathbf{u}$ converges to the unit in $ \tau_\mathbf{u}$ by definition, all but finitely many members of $\mathbf{u}$ are contained in $U$. Thus $U$ is sequentially open in $\tau$.

Conversely, let $U$ be sequentially open in $\tau$. Then $U$ is sequentially open in $ \tau_\mathbf{u}$ for every $\mathbf{u}\in S(G,\tau)$. (Indeed, if $v_n \to g\in U$ in $ \tau_\mathbf{u}$, then $v_n \to g\in U$ in $ \tau$ either. Because of $U$ is sequentially open in $\tau$, almost all $v_n$ are contained in $U$.) Since, by Theorem \ref{p01}, $(G,\tau_\mathbf{u})$ is sequential, $U$ is open in $\tau_\mathbf{u}$. Thus, $U\in \bigwedge_{\mathbf{u}\in S(G,\tau)} \tau_\mathbf{u}$.
\end{proof}

\begin{theorem} \label{t23}
Let $(G,\tau)$ be a Hausdorff  topological group.
\begin{enumerate}
\item[{\rm (i)}] A set $U$ is sequentially open in $\tau_{S(G,\tau)}$ if and only if $U$ is sequentially open in $\tau$, i.e., $\tau^{seq}_{S(G,\tau)}=\tau^{seq}$.
\item[{\rm (ii)}] The topology $\tau_{S(G,\tau)}$ is the finest Hausdorff group topology on $G$ whose open sets are sequentially open in  $\tau$.
\end{enumerate}
\end{theorem}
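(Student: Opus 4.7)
The plan is to derive both parts as essentially immediate bookkeeping consequences of Proposition \ref{p21} and Lemma \ref{l22}, which have already done the substantive work.

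For (i), I would apply Proposition \ref{p21} twice. First to $(G,\tau)$, which gives $\tau^{seq} = \bigwedge_{\mathbf{u}\in S(G,\tau)} \tau_\mathbf{u}$. Then to the Hausdorff topological group $(G,\tau_{S(G,\tau)})$ (Hausdorffness is built into Definition \ref{d01}), which gives $\tau^{seq}_{S(G,\tau)} = \bigwedge_{\mathbf{u}\in S(G,\tau_{S(G,\tau)})} \tau_\mathbf{u}$. By Lemma \ref{l22} the two indexing families coincide, so the two infima are equal and (i) follows.

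For (ii), one half is essentially free: every $\tau_{S(G,\tau)}$-open set is trivially sequentially open in $\tau_{S(G,\tau)}$, and by part (i) this is the same as being sequentially open in $\tau$. So $\tau_{S(G,\tau)}$ does have the property in question. For the maximality half, I would fix any Hausdorff group topology $\tau'$ on $G$ whose open sets are all sequentially open in $\tau$ and show $\tau' \subseteq \tau_{S(G,\tau)}$. Given $\mathbf{u}=\{u_n\}\in S(G,\tau)$ and $U\in \mathcal{U}_{(G,\tau')}$, translation reduces to the case where $U$ is an open neighborhood of $e_G$; by hypothesis $U$ is sequentially open in $\tau$, and by definition of $S(G,\tau)$ we have $u_n\to e_G$ in $\tau$, so almost all $u_n$ lie in $U$. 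Hence $u_n\to e_G$ in $\tau'$ for every $\mathbf{u}\in S(G,\tau)$, which by the defining extremal property of $\tau_{S(G,\tau)}$ yields $\tau'\subseteq \tau_{S(G,\tau)}$.

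I do not anticipate a real obstacle here; the only subtlety is to be careful about the legitimacy of invoking Proposition \ref{p21} on $(G,\tau_{S(G,\tau)})$ and about handling arbitrary $\tau'$-open sets (not just neighborhoods of $e_G$) in the maximality step, both of which are handled by translating by the limit point, as above.
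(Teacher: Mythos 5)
Your proof is correct. For part (ii) it coincides with the paper's argument up to phrasing: the paper runs the maximality step as a proof by contradiction, extracting a subsequence of some $\mathbf{u}\in S(G,\tau)$ that misses a $\tau_0$-neighborhood of the unit, while you argue directly; both hinge on the same observation that a neighborhood of $e_G$ in the competing topology is sequentially open in $\tau$ and therefore absorbs every sequence of $S(G,\tau)$. For part (i) your route is genuinely different. The paper argues directly that a set sequentially open in $\tau_{S(G,\tau)}$ is sequentially open in $\tau$, using Lemma \ref{l22} to convert a $\tau$-convergent sequence into a $\tau_{S(G,\tau)}$-convergent one, and gets the converse for free from $\tau\subseteq\tau_{S(G,\tau)}$. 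You instead compute both sequential modifications via Proposition \ref{p21} and identify the two indexing families through Lemma \ref{l22}. Your version is shorter and exhibits the full identity $\tau^{seq}=\tau^{seq}_{S(G,\tau)}=\bigwedge_{\mathbf{u}\in S(G,\tau)}\tau_{\mathbf{u}}$ in one stroke, at the mild cost of importing the machinery behind Proposition \ref{p21} (in particular Theorem \ref{p01}, the sequentiality of each $(G,\tau_{\mathbf{u}})$), whereas the paper's direct argument for (i) needs only Lemma \ref{l22} and the definition of sequential openness. Your caution about applying Proposition \ref{p21} to $(G,\tau_{S(G,\tau)})$ is well placed but harmless, since $\tau_{S(G,\tau)}$ is Hausdorff by construction; and the translation step in (ii) is not actually needed, since convergence to $e_G$ in $\tau'$ is tested only on neighborhoods of $e_G$, which are already open sets covered by the hypothesis.
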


\begin{proof}
(i) Let $U$ be sequentially open in $\tau_{S(G,\tau)}$ and a sequence $\mathbf{g}=\{ g_n \}$ converge to $g\in U$ in $\tau$. We may assume that $g=e_G$. So   $\mathbf{g}\in S(G,\tau)$. By Lemma \ref{l22}, $\mathbf{g}\in S(G,\tau_{S(G,\tau)})$.  Since $U$ is sequentially open in $\tau_{S(G,\tau)}$,  $g_n \in U$ for all sufficiently large $n$. Hence $U$ is sequentially open in $\tau$.

Since $\tau\subseteq \tau_{S(G,\tau)}$, the converse assertion is trivial.

(ii) By (i) we have to show only the minimality of $\tau_{S(G,\tau)}$. Let $\tau_0$ be an arbitrary Hausdorff group topology on $G$ whose open sets are sequentially open in $\tau$. By the definition of $\tau_{S(G,\tau)}$, we have to prove that any $\mathbf{u}=\{ u_n\} \in S(G,\tau)$ converges to the unit also in $\tau_0$. Assume the converse and there is  an open neighborhood $U$ of  the unit in $\tau_0$ that does not contain an infinitely many terms $\{ u_{n_k} \}$ of $\mathbf{u}$. Set $\mathbf{v}=\{ u_{n_k} \}$. Then $\mathbf{v}\in S(G,\tau)$ and $\mathbf{v}\cap U=\varnothing$. Hence $U$ is not sequentially open in $\tau$. This is a contradiction.
\end{proof}

{\it Proof of Theorem} \ref{t05}. Since $(G,\tau)$ is sequential, by Theorem \ref{t23}(i), we have
\[
\tau_{S(G,\tau)} \supseteq \tau = \tau^{seq} = \tau_{S(G,\tau)}^{seq} \supseteq \tau_{S(G,\tau)}.
\]
Thus, $\tau =\tau_{S(G,\tau)} =\tau^{seq} =\bigwedge_{\mathbf{u}\in S(G,\tau)} \tau_\mathbf{u}$ by Proposition  \ref{p21}, and hence $(G,\tau)=(G, \tau_{S(G,\tau)})$ is an $s$-group.
$\Box$

\begin{theorem} \label{t24}
Let $(G,\tau)$ be a Hausdorff  topological group. The following statements are equivalent:
\begin{enumerate}
\item[{\rm (i)}] $(G,\tau_{S(G,\tau)})$ is sequential;
\item[{\rm (ii)}] $\tau_{S(G,\tau)}=\bigwedge_{\mathbf{u}\in S(G,\tau)} \tau_\mathbf{u}$;
\item[{\rm (iii)}] $\tau^{seq}$ is a Hausdorff group topology.
\end{enumerate}
\end{theorem}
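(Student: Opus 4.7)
The plan is to establish the cycle $(i) \Rightarrow (ii) \Rightarrow (iii) \Rightarrow (i)$, leveraging Proposition \ref{p21}, Lemma \ref{l22}, Theorem \ref{t23}, and Theorem \ref{t05}, all of which jointly govern the interplay between $\tau$, $\tau^{seq}$, and $\tau_{S(G,\tau)}$.

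For $(i) \Rightarrow (ii)$ I will apply Theorem \ref{t05} to the sequential group $(G,\tau_{S(G,\tau)})$ itself; this yields $\tau_{S(G,\tau)} = \bigwedge_{\mathbf{u}\in S(G,\tau_{S(G,\tau)})} \tau_\mathbf{u}$, and Lemma \ref{l22} rewrites the index set as $S(G,\tau)$, giving (ii). For $(ii) \Rightarrow (iii)$ I will invoke Proposition \ref{p21}, which identifies $\tau^{seq}$ with $\bigwedge_{\mathbf{u}\in S(G,\tau)} \tau_\mathbf{u}$; under (ii) this intersection equals the Hausdorff group topology $\tau_{S(G,\tau)}$, so $\tau^{seq}$ is itself a Hausdorff group topology.

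The substantive step is $(iii) \Rightarrow (i)$. Here the strategy is first to show $\tau^{seq} = \tau_{S(G,\tau)}$ and then to exploit the standard fact that the sequential modification of any Hausdorff space is automatically a sequential space. Given (iii), the open sets of $\tau^{seq}$ form a Hausdorff group topology whose members are by definition sequentially open in $\tau$, so the maximality clause of Theorem \ref{t23}(ii) forces $\tau^{seq} \subseteq \tau_{S(G,\tau)}$. For the reverse inclusion, Theorem \ref{t23}(i) asserts that the sequentially open sets of $\tau_{S(G,\tau)}$ and of $\tau$ coincide, so every $\tau_{S(G,\tau)}$-open set is $\tau^{seq}$-open. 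This forces the identification $\tau^{seq} = \tau_{S(G,\tau)}$, and sequentiality of $(G,\tau_{S(G,\tau)})$ follows.

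The main obstacle, as I see it, is confined to $(iii) \Rightarrow (i)$: one must resist the temptation to conclude directly that $(G,\tau^{seq})$ is sequential (which is automatic) and lose sight of the target, and instead pin down that the two topologies literally coincide. The bridge is the maximality half of Theorem \ref{t23}(ii), and that is precisely where the hypothesis that $\tau^{seq}$ is a group topology is actually consumed; the remaining containment and the idempotency of the sequential modification are essentially routine given the earlier results.
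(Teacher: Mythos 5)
Your proposal is correct and follows essentially the same route as the paper: both directions of $(iii)\Rightarrow(i)$ are obtained exactly as in the paper from the two halves of Theorem \ref{t23}, and $(ii)\Rightarrow(iii)$ is Proposition \ref{p21} in both cases. The only cosmetic differences are that for $(i)\Rightarrow(ii)$ you route through Theorem \ref{t05} and Lemma \ref{l22} where the paper cites Theorem \ref{t23}(i) and Proposition \ref{p21} directly (the same underlying machinery), and at the end of $(iii)\Rightarrow(i)$ you invoke idempotency of the sequential modification where the paper reuses Theorem \ref{t23}(i); both are valid.
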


\begin{proof}
(i)$\Rightarrow$(ii). Let $(G,\tau_{S(G,\tau)})$ be sequential. By the definition of sequential spaces, Theorem \ref{t23}(i) and Proposition \ref{p21}, we have $\tau_{S(G,\tau)} =\tau^{seq}_{S(G,\tau)}=\tau^{seq} =\bigwedge_{\mathbf{u}\in S(G,\tau)} \tau_\mathbf{u}$.

(ii)$\Rightarrow$(iii) follows from Proposition \ref{p21}.

(iii)$\Rightarrow$(i). By Theorem \ref{t23}(ii) and the hypothesis, $\tau_{S(G,\tau)}=\tau^{seq}$. Thus, by Theorem \ref{t23}(i), $\tau_{S(G,\tau)}= \tau^{seq} = \tau^{seq}_{S(G,\tau)}$. So $(G,\tau_{S(G,\tau)})$ is sequential.
\end{proof}

The following proposition is a simple observation that immediately follows from the definitions of $k$-spaces and $s$-groups.
\begin{pro} \label{pro1}
Every non-discrete Hausdorff topological group $(G,\tau)$ without infinite compact subsets is neither an $s$-group nor a $k$-space and $\mathbf{s}(G,\tau)= G_d$.
\end{pro}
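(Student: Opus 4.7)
\noindent\textbf{Proof proposal for Proposition \ref{pro1}.}
The plan is to exploit the hypothesis ``no infinite compact subsets'' to kill both non-trivial convergent sequences and the $k$-space property simultaneously. The key preliminary observation is that in a Hausdorff space without infinite compact subsets, every compact subset is finite (otherwise it would contain an infinite compact subspace of itself).

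First I would show that $(G,\tau)$ is not a $k$-space. Assume the contrary. Take any subset $A\subseteq G$ and any compact $K\subseteq G$; by the preliminary observation $K$ is finite, hence so is $A\cap K$, and in a Hausdorff space finite sets are compact. Thus the $k$-space criterion forces every subset of $G$ to be closed, making $\tau$ discrete, contradicting non-discreteness of $(G,\tau)$.

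Next I would compute $S(G,\tau)$ and deduce the remaining statements. If $\mathbf{u}=\{u_n\}$ converges to $e_G$ in $\tau$, then $\{u_n\}\cup\{e_G\}$ is a compact subset of $G$, hence finite. Combined with Hausdorffness (so that uniqueness of limits forbids any accumulation at any point $\neq e_G$), this forces $u_n=e_G$ for all sufficiently large $n$; i.e., $\mathbf{u}$ is trivial. Therefore $S(G,\tau)$ consists entirely of trivial sequences, and by the observation recorded right after Definition \ref{d01}, $\tau_{S(G,\tau)}$ is the discrete topology. This gives $\mathbf{s}(G,\tau)=(G,\tau_{S(G,\tau)})=G_d$. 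Since $(G,\tau)$ is non-discrete, $\tau\neq\tau_{S(G,\tau)}$, and by Proposition \ref{p02} this means $(G,\tau)$ is not an $s$-group.

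There is no substantial obstacle here; the whole content is the two-line observation that ``no infinite compact subsets'' is strong enough both to trivialize convergent sequences (via Hausdorff uniqueness of limits) and to trivialize the $k$-closure operator (via finite sets being closed). Everything else is an immediate appeal to definitions and Proposition \ref{p02}.
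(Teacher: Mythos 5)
Your proposal is correct and follows essentially the same route as the paper: convergent sequences are trivial because a convergent sequence together with its limit is compact hence finite (so $\tau_{S(G,\tau)}$ is discrete and $\mathbf{s}(G,\tau)=G_d$, whence $(G,\tau)$ is not an $s$-group by Proposition \ref{p02}), and the $k$-space property fails because $A\cap K$ is always finite hence closed, which would force $\tau$ to be discrete. The only difference is that you spell out the Hausdorff uniqueness-of-limits step that the paper dismisses with ``Clearly''.
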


\begin{proof}
Clearly, every convergent sequence in $(G,\tau)$ is trivial. Thus the topology $\tau_{S(G,\tau)}$ is discrete. Hence $\mathbf{s}(G,\tau)= G_d$ and $(G,\tau)$ is not an $s$-group.

Let us show that $(G,\tau)$ is not a $k$-space. Let $A$ be an arbitrary subset of $G$. Then for every compact subset $K$ of $(G,\tau)$ the intersection $A\cap K$ is finite and hence closed in $\tau$. Assuming that $(G,\tau)$ is a $k$-space we obtain that $A$ is closed in $\tau$. Since $A$ is arbitrary this means that $\tau$ is discrete that contradicts the assumption of the proposition. Thus $(G,\tau)$ is not a $k$-space.
\end{proof}

The next example is taken from \cite[Theorem 6]{BaT}. For the convenience of the readers we present a more transparent proof.
\begin{exa} \cite{BaT} \label{exa1}
{\it There is an Abelian countably infinite $s$-group $(G, \tau)$  containing a closed subgroup $\Delta$ such that $(\Delta, \tau|_\Delta)$ is  not discrete but contains no infinite compact subsets. In particular, $(\Delta, \tau|_\Delta)$ is  not an $s$-group.}
\end{exa}

\begin{proof}
Consider the metrizable topology $\tau'$ on $\mathbb{Z}_0^\mathbb{N}$ generated by the base $\{ U_n \}_{n\in\omega}$, where
\[
U_n = \{ (n_i)\in \mathbb{Z}_0^\mathbb{N} : \; n_i \in 2^n \cdot \mathbb{Z} \mbox{ for } i\geq 1\}, \quad n\in\omega .
\]
Set $G=(\mathbb{Z}_0^\mathbb{N}, \tau_\mathbf{e})\times (\mathbb{Z}_0^\mathbb{N}, \tau')$. By Theorems \ref{t04} and \ref{t05}, $G$ is an $s$-group.
Let $\Delta =\{ \left( (n_i), (n_i)\right): (n_i)\in \mathbb{Z}^\mathbb{N}_0 \}$ be the diagonal subgroup of $G$. Denote by $\tau_\Delta$ the induced topology of $\tau_\mathbf{e} \times \tau'$ on $\Delta$. We have to show that $(\Delta, \tau_\Delta)$ is neither an $s$-group nor a $k$-space. By Proposition \ref{pro1} it is enough to prove that $(\Delta, \tau|_\Delta)$ has no infinite compact subsets. We shall prove this in the following two steps.

{\it Step} 1. We claim that  $(\Delta, \tau_\Delta)$ is {\it not discrete}.

This follows from the fact that for every $n\in\omega$ and every open neighborhood $U=\sum_n A_{j_n} \in \tau_\mathbf{e}$ (see Section \ref{sec1}(iii)) of zero the intersection $U_n \cap U$ is infinite. Indeed, $U\cap U_n$ contains the sequence $\{ 2^n e_i\}_{i\geq m}$, where $m=j_{2^n}$.

{\it Step} 2. We claim that {\it every compact subset of $(\Delta, \tau_\Delta)$ is finite}.

Indeed, let $K$ be a compact subset of $(\Delta, \tau_\Delta)$. We shall identify $\Delta$ with $\mathbb{Z}_0^\mathbb{N}$. Clearly, $K$ is a compact subset in the topology  $\tau_\mathbf{e}$. Since $\mathbf{e}$ generates $\mathbb{Z}^\mathbb{N}_0$, by Theorem 4.1.4 of \cite{ZP2} (see also \cite[Lemma 2]{Ga1} or Theorem \ref{t32} below), $K$ is contained in $A(k,0)$ for some $k>0$. Assume for a contradiction that $K$ is infinite. Let $K=\{ a_n\}_{n\in\omega}$ be a one-to-one enumeration of the elements of $K$. Since all the coordinates of $a_n$ are less or equal to $k+1$, we obtain that $a_n -a_m \not\in U_{4k}$ for every $n\not= m$. So $\left( a_n + U_{8k}\right) \cap \left( a_m + U_{8k}\right) = \varnothing$ for every $n\not= m$ since, otherwise, $a_n -a_m \in U_{8k} -U_{8k} \subseteq U_{4k}$ that is impossible. Thus the sets $a_n + U_{8k}$ are open in $\tau_\Delta$, mutually disjoint  and  cover  $K$. So the set $K$ is not compact. This contradiction shows that $K$ is finite.
\end{proof}

Recall that the {\it supremum} $\tau :=\tau_1 \vee \tau_2$ of two group topologies $\tau_1$ and $\tau_2$  on a group $G$ is the weakest group topology on $G$ such that the identity maps $(G,\tau) \to (G,\tau_i), i = 1, 2$, are continuous. Clearly, the group $(G, \tau_1 \vee \tau_2)$ may be identified with the diagonal of the product $(G\times G, \tau_1 \times \tau_2)$.
Let us remind that the group $(\mathbb{Z}_0^\mathbb{N}, \tau_\mathbf{e})$ is sequential and that every sequential space is a $k$-space. So, as a corollary of Proposition \ref{pro1} and Example \ref{exa1}, we obtain:
\begin{cor} \label{cc1}
\begin{enumerate}
\item[{\rm (i)}] There is an Abelian countably infinite $s$-group $(G,\tau)$ with a closed subgroup $H$ such that $(H, \tau|_H)$ is not an $s$-group.
\item[{\rm (ii)}] There is an Abelian countably infinite $s$-group that is not a $k$-space.
\item[{\rm (iii)}] The supremum of two $s$-topologies may not be an $s$-topology.
\item[{\rm (iv)}] The product of a sequential group with a metrizable one may not be even a $k$-space.
\end{enumerate}
\end{cor}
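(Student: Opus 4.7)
The plan is to treat all four items as direct consequences of the $s$-group $(G,\tau) := (\mathbb{Z}_0^{\mathbb{N}}, \tau_\mathbf{e}) \times (\mathbb{Z}_0^{\mathbb{N}}, \tau')$ and its diagonal subgroup $(\Delta, \tau|_\Delta)$ constructed in Example \ref{exa1}. Everything boils down to two ingredients already in hand: (a) $(G,\tau)$ is an $s$-group while $(\Delta,\tau|_\Delta)$ is non-discrete, contains no infinite compact subset, and hence by Proposition \ref{pro1} is neither an $s$-group nor a $k$-space; (b) the diagonal of a product of Hausdorff groups is a closed subgroup, and the supremum $\tau_1\vee\tau_2$ of two group topologies on a group is isomorphic to the diagonal of $(G\times G,\tau_1\times\tau_2)$, as recalled just before the corollary.

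For (i) I would take $H := \Delta$ inside $(G,\tau)$. Then $H$ is closed in $G$ because the diagonal of a product of two Hausdorff groups is always closed, and $(H,\tau|_H)$ fails to be an $s$-group by the conclusion of Example \ref{exa1}. For (ii) I would invoke the standard fact that a closed subspace of a $k$-space is a $k$-space; since $(\Delta,\tau|_\Delta)$ is not a $k$-space, the ambient $s$-group $(G,\tau)$ cannot be a $k$-space either, which exhibits the required countably infinite Abelian $s$-group that fails to be a $k$-space.

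For (iii) I would set $\tau_1 := \tau_\mathbf{e}$ and $\tau_2 := \tau'$ on the countable group $\mathbb{Z}_0^{\mathbb{N}}$. Both are $s$-topologies: $(\mathbb{Z}_0^{\mathbb{N}},\tau_\mathbf{e})$ is sequential by Theorem \ref{p01}, hence an $s$-group by Theorem \ref{t05}; and $\tau'$ is metrizable (by its definition via the base $\{U_n\}$), hence also sequential, hence an $s$-topology by Theorem \ref{t05}. Using the identification recalled above, $(\mathbb{Z}_0^{\mathbb{N}}, \tau_\mathbf{e}\vee\tau')$ is topologically isomorphic to the diagonal $(\Delta,\tau|_\Delta)$ of $(G,\tau)$, which by Example \ref{exa1} is not an $s$-group; hence $\tau_\mathbf{e}\vee\tau'$ is not an $s$-topology. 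For (iv) the very group $(G,\tau)$ is the product of the sequential group $(\mathbb{Z}_0^{\mathbb{N}},\tau_\mathbf{e})$ with the metrizable group $(\mathbb{Z}_0^{\mathbb{N}},\tau')$, and the argument used for (ii) shows that this product is not a $k$-space.

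Nothing in the argument is hard: the only minor obstacle is making sure the two standing auxiliary facts used throughout, namely that a closed subspace of a $k$-space is a $k$-space and that the diagonal of a Hausdorff product is closed, are clearly flagged, since everything else is a direct citation of Proposition \ref{pro1}, Example \ref{exa1}, and Theorem \ref{t05} applied to the same pair $(\tau_\mathbf{e},\tau')$.
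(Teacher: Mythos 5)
Your derivation is exactly the paper's: all four items are read off from Proposition \ref{pro1}, Example \ref{exa1}, Theorem \ref{t05} (applied to $\tau_{\mathbf e}$ and to the metrizable topology $\tau'$), the fact that a closed subspace of a $k$-space is a $k$-space, and the identification of $(G,\tau_1\vee\tau_2)$ with the diagonal of $(G\times G,\tau_1\times\tau_2)$ recalled just before the corollary; the logic of each item is sound. The one point to repair is your justification that $\Delta$ is closed. The principle ``the diagonal of a product of two Hausdorff groups is always closed'' is valid for $X\times X$ with the \emph{same} topology on both factors, but here the two factors are the same group under two \emph{different} Hausdorff group topologies, and in that situation closedness of the diagonal in $(G,\tau_1)\times(G,\tau_2)$ is equivalent to the requirement that for every $z\neq e$ there exist $U\in\mathcal U_{(G,\tau_1)}$ and $V\in\mathcal U_{(G,\tau_2)}$ with $z\notin VU^{-1}$, i.e.\ to the Hausdorffness of the infimum of the two topologies --- precisely the condition of Theorem \ref{char1}, which can fail for a pair of $s$-topologies. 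In the case at hand it does hold: either simply cite the statement of Example \ref{exa1}, which asserts that $\Delta$ is closed, or verify it directly --- given $z\neq 0$ supported on the first $k$ coordinates with entries of modulus at most $M$, choose a $\tau_{\mathbf e}$-neighborhood $W=\sum_n A_{j_n}$ of zero with $j_0>k$ (so every element of $W$ vanishes on the first $k$ coordinates) and $U_n$ with $2^n>M$; then $z\notin U_n+W$, which separates the two points off the diagonal. With that one sentence added, the proof is complete and coincides with the paper's.
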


\begin{rmk} {\em
It is well-known that the properties to be sequential or  Fr\'{e}chet-Urysohn are not well-behaved in general under the finite product, and   Corollary \ref{cc1}(iv) demonstrates this. Nevertheless,  the product of a first countable group with a Fr\'{e}chet-Urysohn one is always   Fr\'{e}chet-Urysohn  \cite[Theorem 1.6]{ChMPT}.}
\end{rmk}

\section{The case of countable $T_s$-sets of sequences} \label{sec4}

Let $(G,\tau)$ be an $s$-group. Then $\tau = \tau_S$ for some $S\in \mathcal{TS}(G)$.
Now we  discuss the minimality of $|S|$ of $T_s$-sets $S$ which generate $\tau$.
The following proposition shows that the number $r_s (G,\tau)$ is essentially infinite.
\begin{pro} \label{p12}
{\it Let $(G,\tau)$ be an $s$-group. If $r_s (G,\tau)$ is finite, then $r_s (G,\tau)=1$.}
\end{pro}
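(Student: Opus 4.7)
The plan is to show that any finite $T_s$-set of sequences generating $\tau$ can be merged into a single $T$-sequence generating the same topology, so that $r_s(G,\tau)\le 1$. Concretely, suppose $r_s(G,\tau)=k<\infty$ and pick a witness $S=\{\mathbf{u}_1,\dots,\mathbf{u}_k\}\in\mathcal{TS}(G)$, where $\mathbf{u}_i=\{u^i_n\}_{n\in\omega}$, with $\tau_S=\tau$. Define the interleaved sequence
\[
\mathbf{v}=\{v_m\}_{m\in\omega}, \qquad v_{kn+i-1}:=u^i_n \;\; (1\le i\le k,\ n\in\omega).
\]
I will verify that $\{\mathbf{v}\}\in\mathcal{TS}(G)$ and that $\tau_\mathbf{v}=\tau_S$.

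First I would check that $\mathbf{v}\to e_G$ in $\tau_S$. Given any $U\in\mathcal{U}_{(G,\tau_S)}$, since each $\mathbf{u}_i$ converges to $e_G$ in $\tau_S$, there is $N_i$ with $u^i_n\in U$ for $n\ge N_i$; setting $N=\max_i N_i$, every term of $\mathbf{v}$ with index at least $kN$ lies in $U$. Since $\tau_S$ is Hausdorff, $\mathbf{v}$ is a $T$-sequence, so $\tau_\mathbf{v}$ is defined. The inequality $\tau_S\subseteq \tau_\mathbf{v}$ then follows at once from the definition of $\tau_\mathbf{v}$ as the finest Hausdorff group topology in which $\mathbf{v}$ converges to $e_G$.

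For the reverse inclusion, observe that for each fixed $i$ the subsequence $\{v_{kn+i-1}\}_{n\in\omega}$ of $\mathbf{v}$ is exactly $\mathbf{u}_i$; since every subsequence of a convergent sequence converges to the same limit (in any Hausdorff group topology), each $\mathbf{u}_i$ converges to $e_G$ in $\tau_\mathbf{v}$. Hence $\tau_\mathbf{v}$ is a Hausdorff group topology in which every sequence of $S$ converges to $e_G$, and by the maximality definition of $\tau_S$ we conclude $\tau_\mathbf{v}\subseteq\tau_S$. Combining the two inclusions gives $\tau=\tau_S=\tau_\mathbf{v}$, so $\{\mathbf{v}\}$ witnesses $r_s(G,\tau)\le 1$, forcing $r_s(G,\tau)=1$.

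I do not anticipate a real obstacle: the only subtle point is the identification of each $\mathbf{u}_i$ as an honest subsequence of $\mathbf{v}$ (which is clean because $k$ is finite, and this is precisely where finiteness enters), and the observation that a sequence converging to the unit in a Hausdorff group topology has all of its subsequences converging to the same unit. The degenerate situation in which all $\mathbf{u}_i$ are trivial makes $\tau_S$ discrete and $\mathbf{v}$ trivial as well, and in that case any single trivial sequence already generates the discrete topology, so the conclusion still holds.
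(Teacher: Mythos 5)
Your proposal is correct and follows essentially the same route as the paper: interleave the finitely many sequences of $S$ into one sequence $\mathbf{d}$ (the paper sets $d_{kq+i}=u^i_k$), note that $\mathbf{d}$ converges in $\tau_S$ so $\tau_S\subseteq\tau_{\mathbf{d}}$, and recover each $\mathbf{u}_i$ as a subsequence of $\mathbf{d}$ to get the reverse inclusion. No differences worth noting.
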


\begin{proof}
Let $S=\left\{ \mathbf{u}^0 =\{ u^0_n \}_{n\in\omega}, \dots , \mathbf{u}^{q-1} =\{ u^{q-1}_n \}_{n\in\omega} \right\}$ be such that $\tau =\tau_S$. Set $\mathbf{d} =\{ d_n \}$, where $d_{kq +i} =u^i_k$ for $k\in \omega$ and $0\leq i<q$. Since $S\in \mathcal{TS} (G)$, the sequence $\mathbf{d}$ converges to the unit in $\tau_S$. Hence $\mathbf{d}$ is a $T$-sequence and the topology $\tau_\mathbf{d}$ is finer than $\tau_S$. On the other hand, since $\mathbf{d}$ converges to $e_G$, then all its subsequences $\mathbf{u}^i$ also converge to $e_G$ in $\tau_\mathbf{d}$. So $\tau_S$ is finer than  $\tau_\mathbf{d}$ and hence $\tau_\mathbf{d} = \tau_S =\tau$. Thus $r_s (G,\tau)=1$.
\end{proof}

Proposition \ref{p12} shows that the simplest case which may essentially differ from the case of one $T$-sequence is the case of a countably infinite $T_s$-set of sequences $S$.  Example \ref{ex11} below confirms this assertion. Now we show that many important properties (for example, sequentiality and completeness) are kept also for a countably infinite $S$. Theorem \ref{t14} below generalizes Theorem \ref{p01}.

Let $\{ (X_n,\tau_n )\}_{n\in\omega}$ be a sequence of topological spaces such that $X_n \subseteq X_{n+1}$ and $\tau_{n+1} |_{X_n} = \tau_n$ for all $n\in\omega$.  The union $X=\cup_{n\in\omega} X_n$ with the weak topology $\tau$ (i.e., $U\in\tau$ if and only if $U\cap X_n \in\tau_n$ for every $n\in\omega$) is called the {\it inductive limit} of the sequence $\{ (X_n,\tau_n )\}_{n\in\omega}$ and it is denoted by $(X,\tau)= \underset{\longrightarrow}{\lim} (X_n,\tau_n )$. Recall (see \cite{ZP2}) that a topological space is called a $k_\omega$-{\it space} if it is the inductive limit of an increasing sequence of its compact subsets. A topological group $(G,\tau)$ is called a $k_\omega$-{\it group} if its underlying topological space is a $k_\omega$-space. Let us recall also that
the sets of the form $V_U^l  = \{ (x,y)\in G\times G : x^{-1} y \in U\}$,
where $U\in \mathcal{N}_G$, form a base of the left uniform structure on $(G,\tau)$.
In fact, the following theorem is contained in Chapter 4 of \cite{ZP2}. Our proof of the sequentiality of $(G,\tau_S)$ is essentially simpler than the one proposed in \cite{ZP2}.
\begin{theorem} \label{t14}
Let $S= \{ \mathbf{u}_n \}_{n\in\omega}$ be a countable $T_s$-set of sequences in a group $G$. Assume that $\langle \mathbf{u}_0, \mathbf{u}_1,\dots\rangle =G$. For every $n\in\omega$, put $K_n := \cup_{i=0}^n A^{\mathbf{u}_i}_0$, $X_{n} :=  K_n \cdots  K_n $ with $n+1$ factors $K_n$, and $\tau_n =\tau_S |_{X_n}$.
Then:
\begin{enumerate}
\item[{\rm 1)}] $K_n$ and $X_n$ are  compact metrizable subsets of $(G,\tau_S)$ for every $n\in\omega$;
\item[{\rm 2)}] $(G,\tau_S)=\underset{\longrightarrow}{\lim} (X_n,\tau_n)$. In particular, every compact subset of $(G,\tau_S)$ is contained in some $X_n$;
\item[{\rm 3)}] $(G,\tau_S)$ is a $k_\omega$-group;
\item[{\rm 4)}] $(G,\tau_S)$ is complete in the left uniformity;
\item[{\rm 5)}] if $\tau_S$ is not discrete, then $(G,\tau_S)$ is sequential and ${\rm so}(G,\tau_S)=\omega_1$.
\end{enumerate}
\end{theorem}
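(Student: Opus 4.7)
The plan is to address the five parts in order, leaning on the explicit neighborhood base of $\tau_S$ furnished by Theorem \ref{t11} and the fact that each $\mathbf{u}_i \in S$ converges to $e_G$ in $\tau_S$. For part 1), $A^{\mathbf{u}_i}_0$ is the union of the two sequences $\{u^i_m\}$ and $\{(u^i_m)^{-1}\}$, both converging to $e_G$, together with $e_G$ itself, so it is a compact metrizable subspace of $(G,\tau_S)$. The finite union $K_n$ is a countable compact set in which $e_G$ is the only accumulation point: any $\tau_S$-neighborhood of $e_G$ contains all but finitely many members of each $A^{\mathbf{u}_i}_0$, and Hausdorffness then separates every other point of $K_n$ from both $e_G$ and its remaining finitely many companions, so every $g \neq e_G$ in $K_n$ is isolated. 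Hence $K_n$ is homeomorphic to a convergent sequence with its limit, and is metrizable. Then $X_n$ is compact as the image of $K_n^{n+1}$ under the continuous multiplication of $G$; since $X_n$ is a continuous Hausdorff image of the compact metric space $K_n^{n+1}$, it is itself compact metrizable.

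For part 2), we establish both that every $\tau_S$-compact subset lies in some $X_n$ and that $\tau_S$ coincides with the inductive limit topology on $G = \bigcup_n X_n$. Suppose a compact $K \subseteq (G,\tau_S)$ meets $G \setminus X_n$ for every $n$, and pick $g_n \in K \setminus X_n$. Using the hypothesis $\langle \mathbf{u}_0, \mathbf{u}_1, \dots \rangle = G$ and the description of $\tau_S$ from Theorem \ref{t11}, one builds $\mathbf{j} \in \mathcal{J}$ inductively so that the open set $SP_{k\in\omega} A_k(\mathbf{j})$ and suitable translates isolate infinitely many of the $g_n$, contradicting compactness; this is the natural generalization (diagonalizing across the countably many $\mathbf{u}_i$) of the argument of \cite[Theorem 4.1.4]{ZP2}. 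For the topology identification, ``$\supseteq$'' is immediate from $X_n \subseteq (G,\tau_S)$, while for ``$\subseteq$'', given $U \ni e_G$ with $U \cap X_n \in \tau_n$ for every $n$, one builds $\mathbf{j} \in \mathcal{J}$ step by step, each level-$n$ openness supplying the indices $\mathbf{j}(n,i,g)$ needed to guarantee $SP_{k\in\omega} A_k(\mathbf{j}) \subseteq U$; by Theorem \ref{t11} this gives $U \in \tau_S$.

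Part 3) is then immediate from 1) and 2). For part 4), the $k_\omega$-group structure delivers left-completeness: any left Cauchy filter $\mathcal{F}$ must eventually concentrate inside some compact $X_n$, for otherwise a selection $g_n$ from $F_n \setminus X_n$ produces an unbounded sequence that, via a $\mathbf{j}$-construction as above, contradicts the Cauchy condition against a suitably chosen left entourage; once $\mathcal{F}$ is trapped inside the compact metric space $X_n$, Cauchy implies convergent.

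Statement 5) is the most delicate. Sequentiality follows cleanly from the $k_\omega$-presentation with metrizable compacta: if $U$ is sequentially open in $(G,\tau_S)$, then $U \cap X_n$ is sequentially open in the metrizable (hence sequential) space $X_n$, therefore open in $X_n$, and so $U$ is open in the inductive limit topology $\tau_S$ by part 2); this is the ``essentially simpler'' argument the authors allude to. The main obstacle is proving ${\rm so}(G,\tau_S) = \omega_1$ in the non-discrete case, which requires exhibiting, for each countable ordinal $\alpha$, a subset $E_\alpha \subseteq G$ whose $\alpha$-th iterated sequential closure is a proper subset of its $\tau_S$-closure. The plan is to adapt the transfinite construction of \cite[Theorem 2.3.1]{ZP2}, which realizes ${\rm so}(G,\tau_\mathbf{u}) = \omega_1$ for a single non-trivial $T$-sequence. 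The complication is that $\tau_S = \inf_i \tau_{\mathbf{u}_i}$ is coarser than each $\tau_{\mathbf{u}_i}$, so closures in $\tau_S$ can be strictly larger; one copes with this by localizing the construction inside the $X_n$'s, exploiting the fact that the combinatorial sets $SP_{k\in\omega} A_k(\mathbf{j})$ simultaneously govern convergence for all $\mathbf{u}_i$ with $i\leq n$, and assembling the single-sequence witnesses across levels of the $k_\omega$-decomposition to realize every countable ordinal as a proper sequential-closure level.
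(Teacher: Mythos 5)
Your parts 1) and 3) are fine, and your sequentiality argument in 5) is exactly the paper's (``if $U$ is sequentially open then $U\cap X_n$ is open in the metrizable compactum $X_n$, hence $U\in\tau'$''). The genuine gap is in part 2), which is the crux on which everything else rests. You reduce the inclusion $\tau'\subseteq\tau_S$ to the claim that whenever $U\cap X_n\in\tau_n$ for all $n$ one can build $\mathbf{j}\in\mathcal{J}$ with $SP_{k\in\omega}A_k(\mathbf{j})\subseteq U$, and you assert that ``level-$n$ openness supplies the indices $\mathbf{j}(n,i,g)$.'' That description is not a proof and does not work level by level: $SP_{k\in\omega}A_k(\mathbf{j})$ contains arbitrarily long products $A_{\sigma(0)}(\mathbf{j})\cdots A_{\sigma(m)}(\mathbf{j})$, each factor involving the conjugates $g^{-1}A^i_{\mathbf{j}(k,i,g)}g$ over \emph{all} $g\in G$, so the indices chosen at stage $n$ must already control all longer future products at all translates; openness of $U\cap X_n$ inside the single compactum $X_n$ gives no such control without a global recursive scheme. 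This is precisely the content of Lemma 4.1.3 of \cite{ZP2}, which the paper explicitly declines to reprove. Its route is different and much cheaper: each $\mathbf{u}_n$ converges to $e_G$ in $\tau'$ (it lives in $X_n$, where $\tau'$ restricts to $\tau_n=\tau_S|_{X_n}$), so \emph{if} $\tau'$ is a group topology, the maximality in the definition of $\tau_S$ immediately gives $\tau'\subseteq\tau_S$ --- no $\mathbf{j}$-construction at all. That $\tau'$ is a group topology is then checked by the Mack--Morris--Ordman argument: $(G,\tau')\times(G,\tau')$ is a $k_\omega$-space, so continuity of $(x,y)\mapsto xy^{-1}$ need only be verified on compact sets, where $\tau'$ and the group topology $\tau_S$ induce the same topology. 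Likewise, ``every compact set lies in some $X_n$'' then comes for free from \cite[Lemma 9.3]{Ste}, rather than from your separate (and also only sketched) diagonal isolation argument.

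The remaining items are likewise unclosed. In 4) the assertion that a left Cauchy filter must concentrate in some $X_n$ is stated but not proved; the paper simply invokes completeness of $k_\omega$-groups (Theorem 4.1.6 of \cite{ZP2}), which is the clean way to finish once 3) is in hand. In 5), the equality $\mathrm{so}(G,\tau_S)=\omega_1$ is left entirely as a plan to ``adapt the transfinite construction'' of \cite{ZP2}; this is a substantial construction, not a routine localization to the $X_n$'s, and the paper handles it by citing Exercise 4.3.1 of \cite{ZP2}. As written, your proposal establishes 1), 3) and the sequentiality half of 5) modulo part 2), but part 2) itself rests on an unproved and genuinely difficult claim for which the paper supplies a deliberately different, softer argument.
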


\begin{proof}
1) Since, by definition, every  $A^{\mathbf{u}_n}_0, n\in\omega$, is compact in $\tau_S$, all the sets $K_n$  and $X_n$ are  compact. Since every $K_n$  and $X_n$ are  countable, they are metrizable by \cite[Theorem 3.1.21]{Eng}.

2) By hypothesis, $G=\cup_{n\in\omega} X_n$. Since all the $(X_n,\tau_n)$ are compact, the inductive limit $(G,\tau'):=\underset{\longrightarrow}{\lim} (X_n,\tau_n)$ is a (Hausdorff) $k_\omega$-space, and every its compact subset is contained in some $X_n$ \cite[Lemma 9.3]{Ste}.

Let $\tau_0$ be the finest Hausdorff group topology on $G$ such that $\tau_0 |_{X_n}  =\tau_n =\tau_S |_{X_n}$ for every $n\in\omega$. Then $\tau_0 \supseteq \tau_S$. Since every sequence $\mathbf{u}_n$ converge to $e_G$ in $X_n$ and hence in $\tau_0$, we have $\tau_0 \subseteq \tau_S$ by the definition of $\tau_S$. Thus $\tau_0 = \tau_S$. Hence, to prove that $(G,\tau_S)=(G,\tau')$ it is enough to show that $\tau'$ is a group topology. This follows from Lemma 4.1.3 of \cite{ZP2}, but we give here a much simpler proof repeating word-for-word the proof of Theorem 1 of \cite{MMO}.

To show that $(G,\tau')$ is a topological group, we must to prove that the map $f:(G,\tau')\times (G,\tau') \to (G,\tau')$ given by $f(x,y)=xy^{-1}$ is continuous.

Since $(G,\tau')$ is a $k_\omega$-space, $(G,\tau')\times (G,\tau')$ is also a $k_\omega$-space. Thus, to show that $f$ is continuous we only have to show that $f$ is continuous on all compact subsets of $(G,\tau')\times (G,\tau')$.

Let $K$ be a compact subset of $(G,\tau')\times (G,\tau')$. Then $K\subseteq K_1 \times K_1$, where $K_1$ is a compact subset of $(G,\tau')$. Since $(G,\tau')$ is  a $k_\omega$-space with decomposition $G=\cup_{n\in\omega} X_n$, we see that $K_1 \subseteq X_n$, for some $n$. Thus
\[
f(K) \subseteq f(K_1 \times K_1) \subseteq f(X_n \times X_n) \subseteq X_{2n+1}.
\]
Noting that $K$ is compact and $\tau' \supseteq \tau_S$, we see that $K$ has the same induced topology as a subset of $(G,\tau')\times (G,\tau')$ as it has as a subset of $(G,\tau_S )\times (G,\tau_S )$. Since $\tau' |_{X_{2n+1}} =\tau_S |_{X_{2n+1}}$ and $\tau_S$ is a group topology, $f: K\to X_{2n+1}$ is continuous. So $f$ is continuous on all compact subsets of $(G,\tau')\times (G,\tau')$. Hence $(G,\tau')$ is a topological group.

3) follows from item 2 and the definition of $k_\omega$-groups.

4) follows from 3) and from Theorem 4.1.6 of \cite{ZP2}.

5) $(G,\tau_S)$ is sequential by items 1)-3) and Lemma 1.5 of \cite{ChMPT}. Exercise 4.3.1 of \cite{ZP2} asserts that the sequential order of $(G,\tau_S)$ is $\omega_1$.
\end{proof}

Proposition \ref{p12} asserts that every {\it finite} $T_s$-set of sequences of a group $G$ can be replaced by a single $T$-sequence. The next example shows that in general this proposition  cannot be generalized to all {\it countably infinite} $T_s$-sets of sequences.

\begin{exa} \label{ex11} {\em
Let $\{ G_n\}_{n\in \omega}$ be a sequence of infinite Abelian groups and $\mathbf{u}_n $ be a non-trivial $T$-sequence in $G_n$ for every $n\in \omega$. Then $S=\{ \mathbf{u}_n \}_{n\in \omega}$ is a $T_s$-set of sequences in the direct sum $G=\bigoplus_{n\in\omega} G_n$ because every  $\mathbf{u}_n$ converges to zero in the product topology of $\tau_{\mathbf{u}_n}$. We claim that $r_s (G,\tau_S)= \aleph_0$. Indeed, assuming the converse, by Proposition \ref{p12}, we can find a sequence $\mathbf{v}$ such that $\tau_\mathbf{v} =\tau_S$.  By Theorem \ref{t14},  $\mathbf{v}$ is contained in $G_0 +...+ G_k$ for some $k$. Hence the subgroup $G_0 +...+ G_k$ is open in $\tau_\mathbf{v}$. Thus, for every $n>k$, the sequence $\mathbf{u}_n$ does not converge to zero. This is a contradiction.}
\end{exa}

\begin{rmk} {\em
Let $S\in \mathcal{TS}(G)$ for a group $G$. In general, the group $(G,\tau_S)$ may be not complete. Indeed, let  $G$ be a countable dense subgroup of a compact infinite metrizable group. Then $G$ is metrizable and non complete.  By Theorem \ref{t05}, $(G,\tau_S)$ is an $s$-group, but it is not complete. }
\end{rmk}

{\it Proof of Theorem} \ref{t001}. Let $r_s (G,\tau) <\omega_1$ and $S$ be a countable $T_s$-set of sequences in $G$ such that $\tau = \tau_S$. Then, by Theorem \ref{t14}, the open subgroup $H$ of $(G,\tau)$ generated by $S$   is sequential and ${\rm so }(H,\tau)=\omega_1$. Hence also $(G,\tau)$  is sequential and ${\rm so }(G,\tau)=\omega_1$.
$\Box$

In the rest of this section we deal with the case of one $T$-sequence. In spite of this simplest case was thoroughly studied in \cite{ZP2}, to the best of our knowledge these results are new.

In the next theorem we prove that the topologies of the form $\tau_\mathbf{u}$ are well behaved under the product.
\begin{theorem} \label{t15}
Let $\mathbf{u}=\{ u_n\}_{n\in\omega}$ and $\mathbf{v}=\{ v_n\}_{n\in\omega}$ be  $T$-sequences in groups $G$ and $H$ respectively. Set $\mathbf{d}=\{ d_n\}_{n\in\omega}$, where $d_{2n+1} =(u_n,e_H)$ and $d_{2n} =(e_G ,v_n)$. Then $\mathbf{d}$ is a $T$-sequence in $G\times H$ and $\tau_\mathbf{d} =\tau_\mathbf{u}\times \tau_\mathbf{v}$.
\end{theorem}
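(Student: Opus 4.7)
The plan is to verify both that $\mathbf{d}$ is a $T$-sequence and the equality $\tau_\mathbf{d}=\tau_\mathbf{u}\times\tau_\mathbf{v}$, the latter by two inclusions. The key tool is Proposition \ref{p11}, which reduces continuity of a homomorphism out of $(G,\tau_\mathbf{u})$ to verifying convergence on the generating sequence.

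\textbf{Step 1.} First I would observe that $\tau_\mathbf{u}\times\tau_\mathbf{v}$ is a Hausdorff group topology on $G\times H$ in which $\mathbf{d}$ converges to $(e_G,e_H)$: indeed, the odd subsequence $(u_n,e_H)$ tends to $(e_G,e_H)$ because $u_n\to e_G$ in $\tau_\mathbf{u}$, and symmetrically for the even subsequence. Hence $\mathbf{d}$ is a $T$-sequence, and by the defining maximality of $\tau_\mathbf{d}$ we get the easy inclusion $\tau_\mathbf{u}\times\tau_\mathbf{v}\subseteq\tau_\mathbf{d}$.

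\textbf{Step 2.} The hard direction is $\tau_\mathbf{d}\subseteq\tau_\mathbf{u}\times\tau_\mathbf{v}$. I would consider the natural injective homomorphisms
\[
\iota_G\colon G\to G\times H,\ g\mapsto(g,e_H),\qquad \iota_H\colon H\to G\times H,\ h\mapsto(e_G,h).
\]
Since $\iota_G(u_n)=(u_n,e_H)=d_{2n+1}$ is a subsequence of $\mathbf{d}$, it converges to $(e_G,e_H)$ in $\tau_\mathbf{d}$. Proposition \ref{p11} then gives that $\iota_G\colon(G,\tau_\mathbf{u})\to(G\times H,\tau_\mathbf{d})$ is continuous. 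By the same argument $\iota_H\colon(H,\tau_\mathbf{v})\to(G\times H,\tau_\mathbf{d})$ is continuous.

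\textbf{Step 3.} Now I would combine these with the continuity of multiplication in the topological group $(G\times H,\tau_\mathbf{d})$. The map
\[
(g,h)\longmapsto \iota_G(g)\cdot\iota_H(h)=(g,e_H)\cdot(e_G,h)=(g,h)
\]
is continuous from $(G,\tau_\mathbf{u})\times(H,\tau_\mathbf{v})$ into $(G\times H,\tau_\mathbf{d})$ as a composition of the continuous map $\iota_G\times\iota_H$ with multiplication. But this map is the identity on $G\times H$, so the identity $(G\times H,\tau_\mathbf{u}\times\tau_\mathbf{v})\to(G\times H,\tau_\mathbf{d})$ is continuous. This yields $\tau_\mathbf{d}\subseteq\tau_\mathbf{u}\times\tau_\mathbf{v}$, completing the proof.

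The only subtle point is Step 2, where one must notice that Proposition \ref{p11} applies to an arbitrary subsequence of $\mathbf{d}$, because that subsequence still converges to the unit in $\tau_\mathbf{d}$. Once $\iota_G$ and $\iota_H$ are seen to be continuous into $\tau_\mathbf{d}$, the fact that $(g,e_H)$ and $(e_G,h)$ commute in $G\times H$ makes their product precisely $(g,h)$, so multiplication reconstructs the identity map and no further combinatorial bookkeeping with the basic open sets $SP_{n\in\omega}A_n(\mathbf{l})$ is needed.
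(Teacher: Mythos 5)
Your proof is correct, but it takes a genuinely different route from the paper for the nontrivial inclusion $\tau_\mathbf{d}\subseteq\tau_\mathbf{u}\times\tau_\mathbf{v}$. The paper works directly with the neighborhood base of $\tau_\mathbf{d}$ from case (ii) of Theorem \ref{t11}: given a basic set $SP_{n\in\omega}A_n(\mathbf{l})$, it constructs auxiliary functions $\mathbf{l}^\mathbf{u}\in\mathcal{L}(G)$, $\mathbf{l}^\mathbf{v}\in\mathcal{L}(H)$ and an interleaving of permutations to exhibit an explicit containment $W_\mathbf{u}\times W_\mathbf{v}\subseteq W$ of basic neighborhoods. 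You instead run a soft argument: Proposition \ref{p11} applied to the coordinate embeddings $\iota_G,\iota_H$ (whose images under the generating sequences are subsequences of $\mathbf{d}$, hence $\tau_\mathbf{d}$-convergent) gives their continuity into $(G\times H,\tau_\mathbf{d})$, and then joint continuity of multiplication in the group topology $\tau_\mathbf{d}$ reassembles the identity map $(G\times H,\tau_\mathbf{u}\times\tau_\mathbf{v})\to(G\times H,\tau_\mathbf{d})$ as $\;(g,h)\mapsto\iota_G(g)\cdot\iota_H(h)$. Both arguments are sound; yours is shorter and entirely avoids the combinatorial bookkeeping, essentially exploiting the universal property of $\tau_\mathbf{u}$ and $\tau_\mathbf{v}$ encoded in Proposition \ref{p11}, while the paper's computation yields the slightly more explicit information that every basic $\tau_\mathbf{d}$-neighborhood of the unit already contains a product of basic $\tau_\mathbf{u}$- and $\tau_\mathbf{v}$-neighborhoods. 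Your Step 2 remark is the right subtlety to flag: the applicability of Proposition \ref{p11} rests only on the fact that a subsequence of a convergent sequence converges, which is immediate.
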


\begin{proof}
It is clear that $\mathbf{d}$ converges to the unit in $\tau_\mathbf{u}\times \tau_\mathbf{v}$. So $\mathbf{d}$ is a $T$-sequence in $G\times H$ and $\tau_\mathbf{u}\times \tau_\mathbf{v} \subseteq \tau_\mathbf{d}$. To prove that $\tau_\mathbf{u}\times \tau_\mathbf{v}=\tau_\mathbf{d}$ it is enough to show that every basic neighborhood $W=SP_{n\in \omega} A_{n} (\mathbf{l}), \mathbf{l} \in \mathcal{L} (G\times H),$ of the unit in $\tau_\mathbf{d}$ contains a set of the form $W_\mathbf{u}\times W_\mathbf{v}$, where $W_\mathbf{u} \in \tau_\mathbf{u}$ and $W_\mathbf{v} \in \tau_\mathbf{v}$.

Let $\mathbf{l}^1, \mathbf{l}^2 \in \mathcal{L} (G\times H)$. We say that $\mathbf{l}^1 \leq \mathbf{l}^2$ if $\mathbf{l}^1 (k, (g,h)) \leq \mathbf{l}^2 (k, (g,h))$ for every $(k, (g,h))\in \omega\times G\times H$. Clearly, if $\mathbf{l}^1 \leq \mathbf{l}^2$, then $SP_{n\in \omega} A_{n} (\mathbf{l}^2) \subseteq SP_{n\in \omega} A_{n} (\mathbf{l}^1)$. So we may assume that for every $k\in\omega, g\in G, h\in H$,
\[
\mathbf{l} (2k, (g,h)) = 2\mathbf{l}' (k, (g,h))+1 \mbox{ and } \mathbf{l} (2k+1, (g,h)) = 2\mathbf{l}'' (k, (g,h)),
\]
where $\mathbf{l}' , \mathbf{l}'' \in \mathcal{L} (G\times H)$. For every $k\in\omega, g\in G, h\in H$, set
\[
\mathbf{l}^\mathbf{u} (k,g):=\mathbf{l}' (k, (g,e_H )) \mbox{ and } \mathbf{l}^\mathbf{v} (k,h):=\mathbf{l}'' (k,(e_G, h)).
\]
Then $\mathbf{l}^\mathbf{u} \in \mathcal{L} (G)$ and $\mathbf{l}^\mathbf{v} \in \mathcal{L} (H)$, and
\[
\begin{split}
A^\mathbf{u}_{\mathbf{l}^\mathbf{u} (k,g)} \times \{  e_H\} & = \left\{ e_G, u_{\mathbf{l}^\mathbf{u} (k,g)}^{\pm 1}, \dots\right\} \times \{ e_H\} = \left\{ (e_G, e_H), \left( u_{\mathbf{l}^\mathbf{u} (k,g)}^{\pm 1},e_H \right), \dots\right\} \\
 & = \left\{ (e_G, e_H), d_{\mathbf{l} (2k,(g,e_H))}^{\pm 1}, \dots\right\} \subset A^\mathbf{d}_{\mathbf{l} (2k,(g,e_H))}, \\
\{  e_G\} \times A^\mathbf{v}_{\mathbf{l}^\mathbf{v} (k,h)} & = \{ e_G\} \times \left\{ e_H, v_{\mathbf{l}^\mathbf{v} (k,h)}^{\pm 1}, \dots\right\}  = \left\{ (e_G, e_H), \left( e_G, v_{\mathbf{l}^\mathbf{v} (k,h)}^{\pm 1} \right), \dots\right\} \\
 & = \left\{ (e_G, e_H), d_{\mathbf{l} (2k+1,(e_G, h))}^{\pm 1}, \dots\right\} \subset A^\mathbf{d}_{\mathbf{l} (2k+1,(e_G, h))}.
 \end{split}
 \]
 Thus
\[
A_k (\mathbf{l}^\mathbf{u})  \times \{  e_H\} \subset A_{2k} (\mathbf{l}) \mbox{ and } \{  e_G\} \times A_k (\mathbf{l}^\mathbf{v}) \subset A_{2k+1} (\mathbf{l}).
\]

For every $n\in\omega$ and $\sigma' , \sigma'' \in \mathbb{S}_{n+1}$ put
\[
\sigma(k)=2\sigma' (k) \mbox{ and } \sigma(n+1+k)=2\sigma'' (k) +1 ,\;  0\leq k\leq n.
\]
Then $\sigma \in \mathbb{S}_{2n+1}$ and
\[
\begin{split}
& \left( A_{\sigma'(0)}(\mathbf{l}^\mathbf{u}) \cdots A_{\sigma'(n)}(\mathbf{l}^\mathbf{u}) \right) \times \left( A_{\sigma''(0)}(\mathbf{l}^\mathbf{v}) \cdots A_{\sigma''(n)}(\mathbf{l}^\mathbf{v}) \right) \\
& = \left( A_{\sigma'(0)}(\mathbf{l}^\mathbf{u}) \times \{ e_H \} \right) \cdots  \left( A_{\sigma'(n)}(\mathbf{l}^\mathbf{u}) \times \{ e_H \} \right) \cdot \\
& \quad\cdot  \left( \{ e_G \}\times A_{\sigma''(0)}(\mathbf{l}^\mathbf{v})  \right) \cdots \left( \{ e_G \}\times A_{\sigma''(n)}(\mathbf{l}^\mathbf{v})  \right)
\subseteq A_{\sigma(0)}(\mathbf{l}) \cdots A_{\sigma(2n+1)}(\mathbf{l}).
\end{split}
\]
Set $W_\mathbf{u} = SP_{n\in \omega} A_{n} (\mathbf{l}^\mathbf{u})\in \tau_\mathbf{u}$ and $W_\mathbf{v} = SP_{n\in \omega} A_{n} (\mathbf{l}^\mathbf{v})\in \tau_\mathbf{v}$. Then
\[
\begin{split}
W_\mathbf{u}\times W_\mathbf{v} & = \bigcup_{ n\in\omega} \bigcup_{\sigma' , \sigma'' \in \mathbb{S}_{n+1}} \left( A_{\sigma'(0)}(\mathbf{l}^\mathbf{u}) \cdots A_{\sigma'(n)}(\mathbf{l}^\mathbf{u}) \right) \times \left( A_{\sigma''(0)}(\mathbf{l}^\mathbf{v}) \cdots A_{\sigma''(n)}(\mathbf{l}^\mathbf{v}) \right) \\
  & \subseteq \bigcup_{ n\in\omega} \bigcup_{\sigma  \in \mathbb{S}_{2n+1}} A_{\sigma(0)}(\mathbf{l}) \cdots A_{\sigma(2n+1)}(\mathbf{l}) =W.
\end{split}
\]
\end{proof}

In the next theorem we show that, in fact,  an arbitrary (respectively Abelian) group of the form $(\langle\mathbf{u}\rangle,\tau_\mathbf{u})$ is a quotient group of $(F,\tau_\mathbf{e})$ (respectively $(\mathbb{Z}^\mathbb{N}_0, \tau_\mathbf{e})$):
\begin{theorem} \label{t02}
Let $\mathbf{u}=\{ u_n \}$ be a $T$-sequence in a (respectively Abelian) group $G$ such that $\langle\mathbf{u}\rangle =G$. Then $(G,\tau_\mathbf{u})$ is a quotient  group of $(F,\tau_\mathbf{e})$ (respectively $(\mathbb{Z}^\mathbb{N}_0, \tau_\mathbf{e})$) under the  homomorphism
\[
\pi \left( e^{\varepsilon_1}_{i_1} e^{\varepsilon_2}_{i_2}\dots e^{\varepsilon_m}_{i_m} \right) =u^{\varepsilon_1}_{i_1} u^{\varepsilon_2}_{i_2}\dots u^{\varepsilon_m}_{i_m}, \; \mbox{ where } \varepsilon_j =\pm 1,
\]
\[
\left( \mbox{respectively } \; \pi ((n_1, n_2,\dots, n_m, 0,\dots))= n_1 u_1 +n_2 u_2 +\dots +n_m u_m \right).
\]
\end{theorem}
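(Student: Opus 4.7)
The plan is to verify the three requirements for $\pi$ to be a quotient homomorphism: well-definedness together with surjectivity, continuity, and the claim that the quotient topology coincides with $\tau_\mathbf{u}$.

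First, since $F$ is freely generated (in the appropriate sense) by the alphabet $\{e_1,e_2,\dots\}$ and $\mathbb{Z}_0^{\mathbb{N}}$ is the free Abelian group on the same generators, the assignment $e_n \mapsto u_n$ extends uniquely to a group homomorphism $\pi$. Surjectivity is immediate from the hypothesis $\langle \mathbf{u}\rangle=G$: every element of $G$ is a word in the $u_n^{\pm 1}$ (respectively a finite integer combination of the $u_n$), hence lies in the image of $\pi$.

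For continuity I would simply invoke Proposition \ref{p11}: the map $\pi : (F,\tau_\mathbf{e}) \to (G,\tau_\mathbf{u})$ is a homomorphism satisfying $\pi(e_n)=u_n \to e_G$ in $\tau_\mathbf{u}$ (by the very definition of $\tau_\mathbf{u}$), and hence is continuous. In the Abelian case the same reasoning applies verbatim with $(\mathbb{Z}_0^{\mathbb{N}},\tau_\mathbf{e})$ in place of $(F,\tau_\mathbf{e})$.

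To upgrade continuity to being a quotient homomorphism, let $\tau'$ be the quotient topology induced on $G$ by $\pi$. Continuity of $\pi$ with target $(G,\tau_\mathbf{u})$ gives $\tau_\mathbf{u} \subseteq \tau'$. For the reverse inclusion, observe that $N:=\ker\pi = \pi^{-1}(\{e_G\})$ is closed in $(F,\tau_\mathbf{e})$ because $\{e_G\}$ is closed in the Hausdorff group $(G,\tau_\mathbf{u})$ and $\pi$ is continuous; therefore $(G,\tau')=F/N$ is a Hausdorff topological group. Since $\pi$ is continuous into $(G,\tau')$ and $e_n \to e_F$ in $\tau_\mathbf{e}$, the sequence $u_n=\pi(e_n)$ converges to $e_G$ in $\tau'$. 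By the maximality built into the definition of $\tau_\mathbf{u}$ as the finest Hausdorff group topology on $G$ in which $u_n\to e_G$, we conclude $\tau'\subseteq \tau_\mathbf{u}$, and hence $\tau'=\tau_\mathbf{u}$, as required.

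The only subtle point is the Hausdorffness of the quotient — everything else is essentially formal — but this is handled by the standard observation that continuity into a Hausdorff group forces the kernel to be closed. The Abelian variant is identical, using that $(\mathbb{Z}_0^{\mathbb{N}},\tau_\mathbf{e})$ is the free object in the Abelian category and applying Proposition \ref{p11} in the Abelian setting.
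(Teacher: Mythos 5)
Your proof is correct and follows essentially the same route as the paper: the paper likewise establishes continuity via Proposition~\ref{p11} and then identifies the quotient topology with $\tau_\mathbf{u}$, except that it delegates this last step to Theorem~\ref{t03} (applied to $S=\{\mathbf{e}\}$ and the closed normal subgroup $\ker\pi$), whereas you unfold the same maximality argument directly. Your explicit check that $\ker\pi$ is closed, so that the quotient is a Hausdorff group, is precisely the hypothesis needed to invoke Theorem~\ref{t03} and is left implicit in the paper's one-line proof.
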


\begin{proof}
It is clear that $\pi$ is an algebraic epimorphism. Since $\pi (e_n)=u_n \to e_G$ in $\tau_\mathbf{u} $,  $\pi$ is continuous by Proposition \ref{p11}. By Theorem \ref{t03}, the quotient group $(F, \tau_\mathbf{e})/ \ker\pi$ is topologically isomorphic to $(G,\tau_\mathbf{u})$.
\end{proof}

The last theorem of the section  shows that any topology of the form $\tau_\mathbf{u}$ on Abelian groups can be characterized by the smallness of its compact sets  (condition 2(b)).

\begin{theorem} \label{t32}
Let $(G,\tau)$ be a Hausdorff Abelian topological group. Then the following statements are equivalent:
\begin{enumerate}
\item[{\rm 1.}] $\tau =\tau_\mathbf{u}$ for some $T$-sequence $\mathbf{u}$ in $G$.
\item[{\rm 2.}]
\begin{enumerate}
\item[{\rm (a)}] $(G,\tau)$ is a $k$-space;
\item[{\rm (b)}] there is a sequence $\mathbf{u} $  in $G$ converging to zero in $\tau$ such that for every compact subset $K$ in $(G,\tau)$ there are $n\in \mathbb{N}$ and $g_1,\dots, g_m \in G$ for which
    \[
    K\subseteq \bigcup_{i=1}^m \left( g_i + A(n,0)\right).
    \]
\end{enumerate}
\end{enumerate}
\end{theorem}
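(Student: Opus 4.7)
The plan is to prove the two directions separately, with $1\Rightarrow 2$ being essentially an unpacking of results already cited in the paper and $2\Rightarrow 1$ being the main content.

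For $1\Rightarrow 2$, assume $\tau=\tau_\mathbf{u}$. Statement (a) is immediate: by Theorem \ref{p01}, $(G,\tau_\mathbf{u})$ is sequential, hence a $k$-space. For statement (b), I would use the fact noted in Section \ref{sec1}(ii) that the subgroup $\langle\mathbf{u}\rangle$ is open in $(G,\tau_\mathbf{u})$. Given a compact set $K\subseteq(G,\tau_\mathbf{u})$, the cosets of $\langle\mathbf{u}\rangle$ are pairwise disjoint open sets, so $K$ meets only finitely many of them, say $g_1+\langle\mathbf{u}\rangle,\dots,g_m+\langle\mathbf{u}\rangle$. Each $K_i:=K\cap(g_i+\langle\mathbf{u}\rangle)$ is compact, and $K_i-g_i$ is a compact subset of $(\langle\mathbf{u}\rangle,\tau_\mathbf{u}|_{\langle\mathbf{u}\rangle})$; since $\mathbf{u}$ generates $\langle\mathbf{u}\rangle$, Theorem 4.1.4 of \cite{ZP2} (the same result invoked in Example \ref{exa1}) yields $K_i-g_i\subseteq A(n_i,0)$ for some $n_i\in\omega$. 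Taking $n=\max_i n_i$ finishes (b).

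For $2\Rightarrow 1$, assume (a) and (b) and let $\mathbf{u}$ be the sequence from (b). Since $u_n\to 0$ in the Hausdorff topology $\tau$, $\mathbf{u}$ is a $T$-sequence and the topology $\tau_\mathbf{u}$ exists; by definition of $\tau_\mathbf{u}$ as the finest such topology we have $\tau\subseteq\tau_\mathbf{u}$. The goal is to establish the reverse inclusion $\tau_\mathbf{u}\subseteq\tau$. Fix a compact subset $K$ of $(G,\tau)$. By (b), $K$ is contained in $C:=\bigcup_{i=1}^m(g_i+A(n,0))$, which is a finite union of translates of the $\tau_\mathbf{u}$-compact set $A(n,0)$, hence $\tau_\mathbf{u}$-compact. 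Since $K$ is $\tau$-closed and $\tau\subseteq\tau_\mathbf{u}$, $K$ is also $\tau_\mathbf{u}$-closed, hence $K$ is a closed subset of the $\tau_\mathbf{u}$-compact set $C$ and therefore $\tau_\mathbf{u}$-compact. Now $\tau|_K\subseteq\tau_\mathbf{u}|_K$ are two comparable compact Hausdorff topologies on $K$, so they must coincide. Finally, for any $U\in\tau_\mathbf{u}$ and any compact $K$ in $(G,\tau)$, the set $U\cap K$ is open in $(K,\tau_\mathbf{u}|_K)=(K,\tau|_K)$; because $(G,\tau)$ is a $k$-space (condition (a)), this forces $U\in\tau$, which gives $\tau_\mathbf{u}\subseteq\tau$.

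The main obstacle — and the only step that is not essentially bookkeeping — is the passage in $2\Rightarrow 1$ from the pointwise hypothesis (b) about compact sets to the global equality of topologies. The trick that makes everything work is recognising that condition (b) is precisely what is needed to ensure that every $\tau$-compact set is also $\tau_\mathbf{u}$-compact, so that the standard ``comparable compact Hausdorff topologies coincide'' principle can be applied on each compact set; condition (a) then propagates these local coincidences to the whole group via the $k$-space characterisation of open sets. Without (b) one could not guarantee $\tau_\mathbf{u}$-compactness of $K$, and without (a) one could not reassemble the agreements on compact sets into a statement about $\tau$ itself.
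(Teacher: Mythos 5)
Your proof is correct and follows essentially the same route as the paper's: for $2\Rightarrow 1$ you show, exactly as the paper does, that condition (b) makes every $\tau$-compact set $\tau_\mathbf{u}$-compact (as a $\tau_\mathbf{u}$-closed subset of the $\tau_\mathbf{u}$-compact set $\bigcup_{i}\left( g_i + A(n,0)\right)$) and then use the $k$-space hypothesis to upgrade the agreement of the two topologies on compact sets to the equality $\tau=\tau_\mathbf{u}$. The only divergences are cosmetic: the paper simply cites Lemma~2 of \cite{Ga1} for condition (b) where you re-derive it from the openness of $\langle\mathbf{u}\rangle$ and Theorem~4.1.4 of \cite{ZP2}, and your final step uses only that $(G,\tau)$ is a $k$-space together with the comparable-compact-Hausdorff-topologies principle, while the paper phrases the conclusion via both $(G,\tau)$ and $(G,\tau_\mathbf{u})$ being $k$-spaces with the same compact sets.
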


\begin{proof}
$1. \Rightarrow 2.$ Let $\tau =\tau_\mathbf{u}$ for some $T$-sequence $\mathbf{u}$. By Theorem \ref{p01}, $G$ is a $k$-space. Condition (b) holds by \cite[Lemma 2]{Ga1}.

$2. \Rightarrow 1.$ By condition (b),  $\mathbf{u}$ is a $T$-sequence. By the definition of  $\tau_\mathbf{u}$, we have $\tau\subseteq \tau_\mathbf{u}$. In particular, every  compact subset $K$ in $(G,\tau)$ is closed in $\tau_\mathbf{u}$. By condition (b) and since $A(n,0)$ is compact in $\tau_\mathbf{u}$, the set $K$ is compact in $\tau_\mathbf{u}$ either. Hence $\tau$ and $\tau_\mathbf{u}$ have the same compact sets.  Since $(G,\tau)$ and $(G, \mathbf{u})$ are $k$-spaces, the topologies $\tau$ and $\tau_\mathbf{u}$ coincide.
\end{proof}

\begin{rmk} \label{r11} {\em
The following example shows that we can not drop the requirement on $G$ to be a $k$-space. Let $(\mathbb{Z}, \tau^b)$ be the group of integers $\mathbb{Z}$ equipped with the Bohr topology $\tau^b$.  By Glicksberg's theorem \cite{Gli}, $\mathbb{Z}_d$ and $(\mathbb{Z}, \tau^b)$ have the same compact subsets. Thus, every compact subset of $(\mathbb{Z}, \tau^b)$ is finite and hence $(\mathbb{Z}, \tau^b)$ has no non-trivial convergent sequences. So condition (b) holds if and only if a $T$-sequence $\mathbf{u}$ is  trivial.  On the other hand, for every trivial $T$-sequence $\mathbf{u}$ the group $(\mathbb{Z},\tau_\mathbf{u})$ is discrete and infinite. Since $\tau^b$ is precompact we have $\tau_\mathbf{u} \not=  \tau^b$. }
\end{rmk}

\section{Structure of  $s$-groups} \label{sec5}

All statements of this section are proved only for $F(X)$. All analogous assertions for the Abelian case follow from Theorem \ref{t03} and the fact that $A(X)$ is a quotient of $F(X)$  (note also that they can be proved analogously to the non-Abelian case).

We start from the following:
\begin{pro} \label{p31}
Let $(X,e)$ be a sequential Tychonoff space with basepoint $e$ and the topology $\tau$.  Then the Graev free (respectively  Graev free Abelian) topological group $(F(X), \tau_F)$ (respectively $(A(X), \tau_A)$) is an $s$-group.
\end{pro}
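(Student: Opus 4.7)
The plan is to invoke Theorem \ref{charac} and turn the problem into the automatic continuity statement: every sequentially continuous homomorphism $p:F(X)\to Y$ into a Hausdorff topological group $Y$ is continuous. Once this is established, Definitions \ref{ds} and \ref{d02} coincide on $F(X)$, so $F(X)$ is an $s$-group. The Abelian case then follows at once from Theorem \ref{t03} together with the fact, noted in the introduction, that $A(X)$ is a quotient of $F(X)$ (this is the strategy the section itself announces).

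To execute the key reduction, I would let $p:F(X)\to Y$ be a sequentially continuous homomorphism and consider the composition $f:=p\circ i:X\to Y$, where $i:X\to F(X)$ is the canonical continuous map from the definition of $F(X)$. For any convergent sequence $x_n\to x$ in $X$, continuity of $i$ gives $i(x_n)\to i(x)$ in $F(X)$, and sequential continuity of $p$ then yields $f(x_n)=p(i(x_n))\to p(i(x))=f(x)$ in $Y$. Thus $f$ is a sequentially continuous map from the sequential space $X$ into $Y$, and so is continuous (a map out of a sequential space is continuous iff it is sequentially continuous). Moreover $f(e)=p(i(e))=p(e_{F(X)})=e_Y$ because $p$ is a homomorphism.

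Now I would apply the universal property of the Graev free topological group: there is a continuous homomorphism $\bar f:F(X)\to Y$ with $\bar f\circ i=f=p\circ i$. Since $i(X)$ algebraically generates $F(X)$ and the homomorphisms $\bar f$ and $p$ agree on $i(X)$, they agree on all of $F(X)$; hence $p=\bar f$ is continuous. By Theorem \ref{charac}, $(F(X),\tau_F)$ is an $s$-group, and then $(A(X),\tau_A)$ is an $s$-group by Theorem \ref{t03}.

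I do not expect a genuine obstacle here: the entire argument relies only on (i) continuity of $i:X\to F(X)$, (ii) the universal property of the Graev free group, (iii) the standard fact that a map out of a sequential space is continuous iff it is sequentially continuous, and (iv) the already-established Theorem \ref{charac}. The only conceptual point worth flagging is that the universal property must be invoked for the Hausdorff target $Y$; since the hypotheses of the universal property only require a topological group target with a distinguished identity, this is automatic once $f$ has been shown continuous and basepoint-preserving.
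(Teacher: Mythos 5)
Your argument is correct, but it runs along a genuinely different track from the paper's. You work \emph{externally}: you reduce the claim to Noble's automatic-continuity property via Theorem \ref{charac}, and then verify that property for $F(X)$ by factoring a sequentially continuous homomorphism $p$ through the universal property --- the composite $p\circ i$ is sequentially continuous on the sequential space $X$, hence continuous, hence extends to a continuous homomorphism that must coincide with $p$ on the generating set $i(X)$. Each step checks out (including the basepoint condition $p(i(e))=e_Y$ and the standard fact that maps out of sequential spaces are continuous iff sequentially continuous), and there is no circularity since Theorem \ref{charac} is established in Section \ref{sec2}, well before this proposition is needed. The paper instead argues \emph{internally}: it exhibits a concrete $T_s$-set $S$, namely all null sequences lying in $F_2(X)$ (words of reduced length at most $2$), and shows $\tau_F=\tau_S$ by checking that $\tau_S$ restricts to the original topology on $X$ --- if $E\subseteq X$ were $\tau_S$-closed but not $\tau$-closed, sequentiality yields $a_n\to a\notin E$, and the sequence $a_na^{-1}\in F_2(X)$ forces $a\in E$; one then invokes Graev's theorem that $\tau_F$ is the finest group topology inducing $\tau$ on $X$. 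Your route is shorter and makes transparent exactly where sequentiality of $X$ enters, at the cost of leaning on the already-proved equivalence of the two definitions of $s$-group; the paper's route is self-contained modulo Graev's maximality property and yields the sharper information that $\tau_F$ is already determined by the convergent sequences supported on words of length $\leq 2$. Your treatment of the Abelian case (quotient of $F(X)$ plus Theorem \ref{t03}, the kernel being closed since $A(X)$ is Hausdorff) is exactly what the paper does.
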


\begin{proof}
We will proof the proposition only for $F(X)$ since in the Abelian case the proof is the same. As usual, $F_2 (X)$  stands for a subset of $F(X)$ formed by all words whose reduced length is less or equal to $2$ (recall that the {\it reduced length} of an element $g\in F(X)$ is the number of letters in the reduced word representing $g$). Set
\[
S= \left\{ \{ u_n \} \subset F_2 (X)\, : \; u_n \to e \mbox{ in } \tau_F \right\}.
\]
The set $S$ contains  trivial sequences and hence it is not empty. We will show that $\tau_F = \tau_S$. By the definition of $\tau_S$ we have $\tau_F \subseteq \tau_S$. In particular, $\tau =\tau_F |_X \subseteq \tau_S |_X$. To prove the converse inclusion, by the definition of $\tau_F$, it is enough to show that $\tau_S |_X =\tau$.

Assuming the converse we can find a closed subset $E$ of $X$  in $\tau_S$  such that $E$ is not closed in $\tau$. Since $X$ is sequential, there exists a sequence $\{ a_n \} \subseteq E$ that converges to $a\in X\setminus E$ in $\tau$ and hence in $\tau_F$. Thus the sequence $u_n := a_n \cdot a^{-1} \in F_2 (X)$ converges to $e$ in $\tau_F$. By the definition of $\tau_S$, $u_n \to e$ in $\tau_S$ either. Hence $a_n = u_n \cdot a$ converges to $a$ in $\tau_S$. Since $E$ is closed in $\tau_S$, we obtain that $a\in E$. This is a contradiction.
\end{proof}

In what follows we need the following notion:
\begin{defin}
{\it Let $(G,\tau)$ be a non-discrete $s$-group. The set
\[
S^\ast (G,\tau)=\{ \mathbf{u} =\{ u_n\}_{n\in\omega } \in S(G,\tau): \; \mathbf{u} \mbox{ is one-to-one and } u_n \not= e \mbox{ for every } n\in \omega\}
\]
is called {\em the star} of the $s$-group $G$.}
\end{defin}

\begin{pro} \label{p32}
Let $(G,\tau)$ be a non-discrete $s$-group. Then
\begin{enumerate}
\item[{\rm (i)}] all the elements of all the sequences of $S^\ast (G,\tau)$ generate the whole group $G$;
\item[{\rm (ii)}] if $\mathbf{u}\in S^\ast (G,\tau)$ and $g\in G$, then $g^{-1}\mathbf{u} g \in S^\ast (G,\tau)$;
\item[{\rm (iii)}] $\tau =\tau_{S^\ast(G,\tau)}$.
\end{enumerate}
\end{pro}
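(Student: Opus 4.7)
The plan is to handle the three parts in the order (ii), (i), (iii), with the last being the main work. For (ii), given $\mathbf{u}=\{u_n\}\in S^\ast(G,\tau)$ and $g\in G$, the sequence $g^{-1}\mathbf{u}g = \{g^{-1}u_n g\}$ converges to $e_G$ in $\tau$ by continuity of inner automorphisms; it is one-to-one because $x\mapsto g^{-1}xg$ is injective; and it avoids $e_G$ because $g^{-1}u_n g=e_G$ forces $u_n=e_G$. Before addressing (i) and (iii) I would record that $S^\ast(G,\tau)\neq\varnothing$: since $\tau$ is non-discrete, Proposition \ref{p02} and the elementary observation that $\tau_S$ is discrete whenever every $\mathbf{u}\in S$ is trivial together imply that $S(G,\tau)$ contains a non-trivial sequence $\mathbf{w}$. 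A Hausdorffness argument then shows that each value $v\neq e_G$ can appear only finitely often in $\mathbf{w}$ (otherwise the corresponding constant subsequence would converge simultaneously to $v$ and to $e_G$), so one can inductively extract a one-to-one subsequence of $\mathbf{w}$ avoiding $e_G$, producing an element of $S^\ast(G,\tau)$.

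For (i), let $H$ be the subgroup of $G$ generated by the terms of all sequences in $S^\ast(G,\tau)$. The case $g=e_G$ is trivial, so fix $g\in G\setminus\{e_G\}$ and pick any $\mathbf{w}=\{w_n\}\in S^\ast(G,\tau)$. Because $\mathbf{w}$ is one-to-one, the set $\{n:w_n=g\}$ has at most one element, so I can define $\mathbf{u}$ by setting $u_1=g$ and letting $u_n$ for $n\geq 2$ enumerate $\{w_k:w_k\neq g\}$ in the natural order. The resulting $\mathbf{u}$ is one-to-one, avoids $e_G$, and agrees with $\mathbf{w}$ up to a shift and one prepended term, hence still converges to $e_G$ in $\tau$. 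Thus $\mathbf{u}\in S^\ast(G,\tau)$ and $g=u_1\in H$, giving $H=G$.

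For (iii), the easy inclusion follows by bookkeeping: $S^\ast(G,\tau)\subseteq S(G,\tau)$ and the definition of $\tau_{(\,\cdot\,)}$ as the finest topology with a given convergence constraint yield $\tau_{S(G,\tau)}\subseteq \tau_{S^\ast(G,\tau)}$, and by Proposition \ref{p02} the left side equals $\tau$. The main obstacle is the reverse inclusion $\tau_{S^\ast(G,\tau)}\subseteq\tau$, which I would prove by establishing the apparently stronger fact that every $\mathbf{u}=\{u_n\}\in S(G,\tau)$ converges to $e_G$ in $\tau_{S^\ast(G,\tau)}$; combined once more with Proposition \ref{p02}, this forces $\tau_{S^\ast(G,\tau)}\subseteq\tau_{S(G,\tau)}=\tau$. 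Suppose not; then some $\mathbf{u}\in S(G,\tau)$ and some $\tau_{S^\ast(G,\tau)}$-neighborhood $U$ of $e_G$ admit a subsequence $\{u_{n_k}\}\subseteq G\setminus U$. Since $e_G\in U$, every $u_{n_k}\neq e_G$; since $\{u_{n_k}\}$ still converges to $e_G$ in $\tau$, Hausdorffness again implies each value $v\neq e_G$ appears only finitely often among the $u_{n_k}$, so I can inductively extract a one-to-one further subsequence $\mathbf{v}$. By construction $\mathbf{v}\in S^\ast(G,\tau)$, so the defining property of $\tau_{S^\ast(G,\tau)}$ forces $\mathbf{v}$ eventually to enter $U$, contradicting $\mathbf{v}\subseteq\{u_{n_k}\}\subseteq G\setminus U$. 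The heart of the proof is precisely this subsequence-extraction argument, which converts a generic $\mathbf{u}\in S(G,\tau)$ into an element of $S^\ast(G,\tau)$ well enough to transfer convergence.
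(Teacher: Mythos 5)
Your proof is correct and follows essentially the same route as the paper: the key device in both is the extraction, via Hausdorffness, of a one-to-one subsequence avoiding $e_G$ from any non-trivial member of $S(G,\tau)$, which is then used for non-emptiness of $S^\ast(G,\tau)$, for (i) by prepending an arbitrary $g\neq e_G$, and for the hard inclusion in (iii). The only organizational difference is in (iii): the paper proves a ``simultaneous convergence in any group topology'' claim relating the original sequence to its extracted subsequence, whereas you run the extraction inside a proof by contradiction applied to the terms lying outside a fixed $\tau_{S^\ast(G,\tau)}$-neighborhood --- a slightly leaner formulation of the same idea.
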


\begin{proof}
Since $(G,\tau)$ is non-discrete, $S(G,\tau)$ contains a non-trivial sequence. Let $\mathbf{u}=\{ u_n\}_{n\in\omega} \in S(G,\tau)$ be an arbitrary non-trivial sequence. We will show the following:
\begin{enumerate}
\item[$(\alpha)$] {\it there is a sequence $\mathbf{v} \in S^\ast(G,\tau)$ such that $\mathbf{v}$ and $\mathbf{u}$ converge to the unit (or diverge) simultaneously in  any group topology on $G$.}
\end{enumerate}
We will construct such a $\mathbf{v}$  as follows.

Let $u_0 \not= e$. Since $\mathbf{u}\to e$ in $\tau$, there is at most finite set of indices $I_1$ such that $i\geq 1$ and $u_i =u_0$ for every $i\in I_1$. Set $v_0 = u_0$. Denote by $\mathbf{u}^1 = \{ u_n^1 \}_{n\geq 1} $  the sequence $\mathbf{u}\setminus \left[ \{ u_0\} \bigcup\cup_{i\in I_1} \{u_i\} \right]$ with the natural enumeration. If $u_0 =e$, set $\mathbf{u}^1 = \{ u_n^1 \}_{n\geq 1} $, where $u_n^1 =u_n$. It is clear that $\mathbf{u}^1 \to e$ in $\tau$.

Consider the sequence $\mathbf{u}^1$. Let $u_1^1 \not= e$. Since $\mathbf{u}^1 \to e$ in $\tau$, there is at most finite set of indices $I_2$ such that $i\geq 2$ and $u_i =u_1^1$ for every $i\in I_2$. Set $v_1 = u_1^1$. Denote by $\mathbf{u}^2 = \{ u_n^2 \}_{n\geq 2} $  the sequence $\mathbf{u}^1 \setminus \left[ \{ u_1^1 \} \bigcup\cup_{i\in I_2} \{u_i^1\} \right]$ with the natural enumeration. If $u_1^1 =e$, set $\mathbf{u}^2 = \{ u_n^2 \}_{n\geq 2} $, where $u_n^2 =u_n^1$. It is clear that $\mathbf{u}^2 \to e$ in $\tau$.

And so on. Since $\mathbf{u}$ is not trivial, this process is infinite and we  can construct the sequence $\mathbf{v}$ (it is a subsequence of $\mathbf{u}$) such that it is one-to-one and does not contain the unit, i.e., $\mathbf{v}\in S^\ast(G,\tau)$. It is clear that we can obtain the sequence $\mathbf{u}$ from the sequence $\mathbf{v}$ by the following way: there is the subset $I\subseteq\omega$ of indices and the natural number $n_i$ for every $i\in I$ such that it is need to add to  $\mathbf{v}$ exactly $n_i$ terms $v_i$ and some set (maybe infinite) of the units.  By construction, $\mathbf{v}$ and $\mathbf{u}$ converge to the unit simultaneously in  any group topology on $G$.

(i) By ($\alpha$), the set $S^\ast(G,\tau)$ is non empty. Let $\mathbf{u}\in S^\ast(G,\tau)$ and  $g\not= e$ be an arbitrary element of $G$. If $g\in \mathbf{u}$, then $g\in \langle S^\ast(G,\tau)\rangle$. If $g\not\in \mathbf{u}$ we may add  $g$ to $\mathbf{u}$, and then this new sequence also belongs to $S^\ast(G,\tau)$. Thus, $g\in \langle S^\ast(G,\tau)\rangle$ and (i) follows.

(ii) is trivial.

(iii) Since $S^\ast(G,\tau) \subset S(G,\tau)$, we have $ \tau \subseteq \tau_{S^\ast(G,\tau)}$. Conversely, let $\mathbf{u}\in S(G,\tau)$. By the definition of $s$-topology, we have to show only that $\mathbf{u}\to e$ in $\tau_{S^\ast(G,\tau)}$ either. If $\mathbf{u}$ is trivial or $\mathbf{u}\in S^\ast(G,\tau)$, this is clear. Assume that $\mathbf{u}$ is nether trivial nor belongs to $S^\ast(G,\tau)$. Then $\mathbf{u}\to e$ in $\tau_{S^\ast(G,\tau)}$ by ($\alpha$).
\end{proof}

Now we recall the construction of the Fr\'{e}chet-Urysohn fan over an arbitrary set $Q$ of sequences.
Let $Q=\{ \mathbf{u}\}_{\mathbf{u}\in Q}$ be a non-empty set of one-to-one sequences in a set $\Omega$. The disjoint sum of these sequences with basepoint $e$ is denoted by $X_Q$, i.e.,
\[
X_Q = \bigoplus_{\mathbf{u}\in Q} \mathbf{u} \oplus \{ e\} = X_Q^0 \oplus \{ e \}.
\]
Let $\mathcal{B} (Q)$ be the set of all functions from $Q$ into $\omega$. The topology $\nu_Q$ on $X_Q$ is defined as follows: each point of $X_Q^0$ is isolated and the base at $e$ is formed by the sets of the form
\[
W(\beta) := \{ e \}\cup \bigcup_{\mathbf{u}\in Q} \left\{ u_{\beta (\mathbf{u})}, u_{\beta (\mathbf{u})+1},\dots \right\} , \quad \beta\in \mathcal{B} (Q).
\]
Then $X_Q$ is a Fr\'{e}chet-Urysohn  Tychonoff space.

\begin{pro} \label{p33}
Let $S^\ast(G,\tau)$ be the star of a non-discrete (respectively Abelian) $s$-group $(G,\tau)$ and let $X_{S^\ast(G,\tau)} =X_{S^\ast(G,\tau)}^0 \oplus \{ e \}$ be the Fr\'{e}chet-Urysohn fan over $S^\ast(G,\tau)$.  Then $(G,\tau)$ is a quotient of the Graev free (respectively Abelian) topological group $F(X_{S^\ast(G,\tau)})$ (respectively $A(X_{S^\ast(G,\tau)})$). The quotient map is sequence-covering.
\end{pro}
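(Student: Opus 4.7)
The plan is to build a continuous surjective homomorphism out of the universal property of the Graev free topological group, identify its quotient topology with $\tau$ via Theorem \ref{t12} applied through the identity $\tau=\tau_{S^\ast(G,\tau)}$ from Proposition \ref{p32}(iii), and establish the sequence-covering property by an explicit lift built from the ``first-occurrence distinct-values'' construction used in the proof of Proposition \ref{p32}.

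First I would define $f\colon X_{S^\ast(G,\tau)}\to(G,\tau)$ by $f(e)=e_G$ and, for each $\mathbf{u}=\{u_n\}\in S^\ast(G,\tau)$, $f(u^{\mathbf{u}}_n)=u_n$, where $u^{\mathbf{u}}_n$ denotes the copy of $u_n$ sitting inside the summand of $X_{S^\ast(G,\tau)}^{0}$ indexed by $\mathbf{u}$. Continuity at the isolated points is automatic; at $e$, given a neighborhood $U$ of $e_G$ in $\tau$, the convergence $u_n\to e_G$ for each $\mathbf{u}\in S^\ast(G,\tau)$ produces a $\beta\in\mathcal{B}(S^\ast(G,\tau))$ with $f(W(\beta))\subseteq U$. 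Since $f(e)=e_G$, the universal property of $F(X_{S^\ast(G,\tau)})$ extends $f$ to a continuous homomorphism $\bar f\colon F(X_{S^\ast(G,\tau)})\to(G,\tau)$, which is surjective by Proposition \ref{p32}(i). Moreover, Proposition \ref{p31} already ensures that $F(X_{S^\ast(G,\tau)})$ is itself an $s$-group, since the Fr\'echet-Urysohn fan is sequential.

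Let $\tilde\tau$ denote the quotient topology on $G$ induced by $\bar f$; continuity of $\bar f$ into $(G,\tau)$ yields $\tau\subseteq\tilde\tau$. For the reverse inclusion, the identity $\tau=\tau_{S^\ast(G,\tau)}$ from Proposition \ref{p32}(iii), combined with Theorem \ref{t12}, reduces continuity of $\mathrm{id}\colon(G,\tau)\to(G,\tilde\tau)$ to checking $u_n\to e_G$ in $\tilde\tau$ for each $\mathbf{u}\in S^\ast(G,\tau)$. But every such $\mathbf{u}$ appears as one of the summands of the fan, so $i(u^{\mathbf{u}}_n)\to e_{F(X_{S^\ast(G,\tau)})}$; continuity of $\bar f$ into $(G,\tilde\tau)$ then forces $u_n\to e_G$ in $\tilde\tau$. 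Hence $\tilde\tau=\tau$ and $\bar f$ is a topological quotient.

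For sequence-covering, fix $\mathbf{v}=\{v_n\}\in S(G,\tau)$. If $\mathbf{v}$ is trivial, lift by $e_{F(X_{S^\ast(G,\tau)})}$; otherwise apply the construction $(\alpha)$ from the proof of Proposition \ref{p32} to obtain a one-to-one sequence $\mathbf{u}=\{u_k\}\in S^\ast(G,\tau)$ of first-occurrence distinct non-unit values, together with indices $k(n)$ such that $v_n=u_{k(n)}$ whenever $v_n\neq e_G$. Set $y_n=e$ if $v_n=e_G$ and $y_n=u^{\mathbf{u}}_{k(n)}$ otherwise, and let $x_n=i(y_n)$, so that $\bar f(x_n)=v_n$. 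Given any $W(\beta)$, each $u_j$ occurs only finitely often in the null sequence $\mathbf{v}$, so the set $\{n:v_n\in\{u_0,\dots,u_{\beta(\mathbf{u})-1}\}\}$ is finite, and eventually either $y_n=e$ or $k(n)\geq\beta(\mathbf{u})$, whence $y_n\in W(\beta)$. Since $i$ is a topological embedding, $x_n\to e_{F(X_{S^\ast(G,\tau)})}$, completing the argument. The main delicate step is precisely this sequence-covering verification, where the bookkeeping of the $(\alpha)$-construction must be aligned with the neighborhood base of the fan.
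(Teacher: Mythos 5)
Your proof is correct, and it departs from the paper's at the one step that carries the real weight: showing that $\bar f$ is a quotient map. The paper proves openness of the map directly: for an open $U\ni e$ in $F(X_{S^\ast(G,\tau)})$ it takes a chain $\{V_n\}$ as in Lemma \ref{l11}, extracts $\beta_n$ with $W(\beta_n)\subseteq V_n$, and assembles them --- using the conjugation-invariance of $S^\ast(G,\tau)$ from Proposition \ref{p32}(ii) --- into a $\mathbf{j}\in\mathcal{J}$ with $SP_{n\in\omega}A_n(\mathbf{j})\subseteq p(U)$, relying on the explicit base of $\tau_S$ from Theorem \ref{t11}. You instead note that the quotient topology $\tilde\tau$ induced by $\bar f$ is a Hausdorff group topology (finer than $\tau$, and a group topology because it is the transported quotient topology of $F(X_{S^\ast(G,\tau)})/\ker\bar f$ --- a standard fact you should state) in which every $\mathbf{u}\in S^\ast(G,\tau)$ converges to the unit, being the image of a null sequence of the fan; hence $\tilde\tau\subseteq\tau_{S^\ast(G,\tau)}=\tau$ by maximality. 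Your detour through Theorem \ref{t12} works, but you could quote the definition of $\tau_S$ directly. This soft argument buys freedom from all the $\mathbf{j}$-bookkeeping and from Proposition \ref{p32}(ii) altogether (your continuity check at $e$ is likewise simpler than the paper's, which again goes through the explicit base); what the paper's computation buys in exchange is the explicit openness of $p$, with a concrete basic neighborhood of $e_G$ sitting inside each $p(U)$. On sequence-covering, where the paper only gestures at assertion $(\alpha)$, your lift through the fan together with the observation that each non-unit value occurs only finitely often in a null sequence of a Hausdorff group is exactly the missing detail, and it is correct.
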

\begin{proof}
By Proposition \ref{p32}(iii) we may assume that $\tau = \tau_{S^\ast(G,\tau)}$.

Define the following map from $X_{S^\ast(G,\tau)}$ into $(G,\tau)$:
\[
p(x) = x, \mbox{ if } x\in X_{S^\ast(G,\tau)}^0, \mbox{ and } p(e)= e_G.
\]

We claim that $p$ is {\it  continuous.} Since only $e$ is a non-isolated point in $X_{S^\ast(G,\tau)}$, we have to show only the continuity of $p$ at $e$. Let $\mathbf{j}\in \mathcal{J}$ and $SP_{n\in \omega} A_{n} (\mathbf{j}) \in \mathcal{U}_G$. Set
\[
\beta(\mathbf{u}) := \mathbf{j} (0,\mathbf{u}, e_G) \mbox{ for every } \mathbf{u}  \in S^\ast(G,\tau).
\]
Then
\[
p(W(\beta)) \subseteq \bigcup_{\mathbf{u}\in S^\ast(G,\tau)} A^\mathbf{u}_{\mathbf{j} (0,\mathbf{u}, e_G)} \subset A_0 (\mathbf{j}) \subset SP_{n\in \omega} A_{n} (\mathbf{j}),
\]
and so $p$ is continuous.

By the definition of $F(X_{S^\ast(G,\tau)})$ we can extend $p$ to a continuous homomorphism  from $F(X_{S^\ast(G,\tau)})$ into $(G,\tau)$ that will be denoted also by $p$. By the construction of $X_{S^\ast(G,\tau)}$ and Proposition \ref{p32}(i), $p(X_{S^\ast(G,\tau)})$ generates $G$ and hence $p$ is surjective. Taking into consideration assertion ($\alpha$) in the proof of Proposition \ref{p32}, $p$ is sequence-covering.

We claim that $p$ is a {\it quotient map}.
To prove this we have to show that $p$ is open \cite[Theorem 5.27]{HR1}, i.e., the image of an arbitrary open neighborhood of $e$ in $F(X_{S^\ast(G,\tau)})$ contains a neighborhood of $e_G$ in $(G,\tau)$.

Let $U$ be an arbitrary open neighborhood of $e$ in $F(X_{S^\ast(G,\tau)})$. By Lemma \ref{l11}, there exists a decreasing sequence of symmetric neighborhoods $\{ V_n \}_{n\in \omega}$ of $e$ in $F(X_{S^\ast(G,\tau)})$ such that
\begin{equation} \label{1}
\bigcup_{ n\in\omega} \bigcup_{\sigma \in \mathbb{S}_{n+1}} V_{\sigma(0)} V_{\sigma(1)}\cdots V_{\sigma(n)} \subseteq U.
\end{equation}
Since $V_n \cap X_{S^\ast(G,\tau)}$ is an open neighborhood of $e$ in $X_{S^\ast(G,\tau)}$, we can find a sequence $\{ \beta_n\}_{n\in\omega} \in \mathcal{B}({S^\ast(G,\tau)})$ such that
\begin{enumerate}
\item $\beta_n (\mathbf{u}) < \beta_{n+1} (\mathbf{u})$ for all $\mathbf{u}\in S^\ast(G,\tau)$, and
\item $W(\beta_n ) \subseteq V_n$.
\end{enumerate}
By Proposition \ref{p32}(ii), we may define
\[
\mathbf{j} (n,\mathbf{u},g) := \beta_n (g^{-1} \mathbf{u}g), \; \forall n\in\omega, \forall \mathbf{u}\in S^\ast(G,\tau) , \forall g\in G.
\]
Then, by the choice of $\beta_n$, $\mathbf{j} \in \mathcal{J}$. For every $g\in G$ and each $\mathbf{u}\in S^\ast(G,\tau)$ we have
\[
g^{-1} \left\{ u_{\mathbf{j} (n,\mathbf{u},g) }, u_{\mathbf{j} (n,\mathbf{u},g) +1},\dots \right\} g =
\left\{ g^{-1} u_{\beta_n (g^{-1} \mathbf{u}g) } g, \dots \right\} \subset p(W(\beta_n)).
\]
Since $V_n$ is symmetric, we obtain that
\[
g^{-1} A^\mathbf{u}_{ \mathbf{j} (n,\mathbf{u},g) } g \subset p(V_n), \; \forall n\in\omega, \forall \mathbf{u}\in S^\ast(G,\tau), \forall g\in G.
\]
Hence $A_n (\mathbf{j}) \subseteq p(V_n)$ for every $n\in \omega$. By (\ref{1}), we have $SP_{n\in \omega} A_{n} (\mathbf{j}) \subseteq p(U)$. Thus $p$ is open.
\end{proof}

{\it Proof of Theorem} \ref{t06}. (i) $\Rightarrow$ (iii) follows from Proposition \ref{p33}.

(iii) $\Rightarrow$ (iv) is trivial.

(iv) implies (i) by Proposition \ref{p31} and Theorem \ref{t03}.

(ii) $\Rightarrow$ (iv) is trivial.

(iv) $\Rightarrow$ (ii). It is enough to show that every Graev free topological group over a sequential space is a quotient group of the Graev free topological group over a metrizable space.
Let $Y$ be a sequential space and $e_Y$ a base point in $Y$. By Franklin's theorem \ref{t01}, $Y$ is a quotient space of a metric space $X$. Let $q:X\to Y$ be the quotient mapping. Choose
an arbitrary preimage $e_X$ of $e_Y$ in $X$, that is, $q(e_X)=e_Y$. Then $F(Y)$ is  a quotient group of $F(X)=F(X,e_X)$ by Corollary 7.1.9 of \cite{ArT}. For the convenience of the reader we repeat this proof.

By the definition of $F(X)$, $q$ extends to a continuous homomorphism $\hat{q}: F(X)\to F(Y)$. Since $q$ is onto $\hat{q}$ is onto as well. We have to show only that $\hat{q}$ is open.

Denote by $\mathcal{T}_q$ the family of all images $\hat{q}(U)$, where $U$ is open in $F(X)$. Since $\hat{q}$ is an epimorphism, $\mathcal{T}_q$ is a group topology on the abstract group $F_a (Y)$ (i.e., $F_a (Y)$ is the underlying abstract group $F(Y)$). Since $\hat{q}$ is continuous $\mathcal{T}_q$ is not weaker  than the topology $\mathcal{T}$ of $F(Y)$, i.e., $\mathcal{T}\leq \mathcal{T}_q$.

Let us show that $\mathcal{T}_q$ induces on $Y$ its original topology $\tau_Y$. It suffices to verify that the set $V=\hat{q}(U)\cap Y$ is open in $Y$ for every open subset $U$ of $F(X)$. Denote by
$N$ the kernel of $\hat{q}$. It is easy to see that $q^{-1} (V)= X\cap NU$. Thus $q^{-1} (V)$ is open in $X$. Since $q$ is quotient, the set $V$ is open in $Y$.

Since $\mathcal{T}$ is the finest group topology on $F_a(Y)$ that induced on $Y$ the topology $\tau_Y$ \cite{Gra}, we obtain that $\mathcal{T}_q =\mathcal{T}$. In other words, $\hat{q}(U)$ is open in $F(Y)$ for every open set $U$ in $F(X)$. Thus $\hat{q}$ is a quotient map.
$\Box$

\begin{rmk} {\em
Let us note that we cannot weaken Theorem \ref{t06} to spaces of countable tightness. Indeed, let $G$ be a countable group without convergent sequences (for example, the group of integers $(\mathbb{Z}, \tau^b)$ with the Bohr topology). Clearly, the groups $G$ and $F(G)$ have countable tightness. By Lemma 2.11 of \cite{Mor}, $G$ is a quotient group of $F(G)$. Since $G$ is not an $s$-group, $F(G)$ is not an $s$-group as well by Theorem \ref{t03}. }
\end{rmk}

\section{Open questions}

Since every sequential group is an $s$-group, it is interesting to characterize the class $\mathbf{Seq}$ of all sequential groups as a subclass of the class
$\mathbf{S}$ of all $s$-groups.
\begin{problem}
Characterize the class $\mathbf{Seq}$ as a subclass of the class $\mathbf{S}$.
\end{problem}
Recall that a space $X$ is of {\it countable tightness} if for each set $A\subseteq X$, and every $x$ from the closure $\mathrm{cl}(A)$ of $A$, there exists a countable subset $B\subseteq A$ such that $x\in \mathrm{cl}(B)$. Recall also that every sequential space has countable tightness. It is well-known  that every closed subgroup of a sequential group is sequential and hence an $s$-group.
\begin{problem}  \label{prob1}
Let $(G,\tau)$ be an $s$-group of countable tightness such that all its closed subgroups are also  $s$-groups. Is $(G,\tau)$ sequential?
\end{problem}
Since every sequential Hausdorff group must be also a $k$-space, one can ask:
\begin{problem}  \label{prob2}
Let $(G,\tau)$ be an $s$-group  of countable tightness that is also a $k$-space. Must $(G,\tau)$ be sequential?
\end{problem}
Let us note that the condition on $(G,\tau)$ to have countable tightness in Problems \ref{prob1} and \ref{prob2} is essential. Indeed, it is known (see \cite{Arh}) that every  locally compact group  of countable tightness  is metrizable. Let $G$ be the direct product of $\kappa$-many copies of the torus $\mathbb{T}$ where $\aleph_0 < \kappa < \mathfrak{s}$. Then, by Theorem \ref{Nob}, $G$ is a compact (hence a $k$-space) non-metrizable $s$-group. It can be shown that every subgroup of $G$ is an $s$-group as well. Thus the answers to these problems without the assumption of countable tightness are negative.

We would like to know which groups from the most important classes of topological groups are contained in $\mathbf{S}$.
\begin{problem} \label{prob3}
Which locally compact  groups are $s$-groups?
\end{problem}
It is well-known that every Abelian locally compact group $(G,\tau)$ is topologically isomorphic to the direct product $\mathbb{R}^n \times H$, where $n\in\omega$ and $H$ has an open compact subgroup $K$. Thus, by Theorem \ref{t04}, $(G,\tau)$ is an $s$-group if and only if $K$ is an $s$-group. So in the  Abelian case we ask:
\begin{problem} \label{prob4}
Which Abelian compact  groups are $s$-groups?
\end{problem}

The case of vector spaces is of independent interest. For a topological space $X$ let $C_p (X)$ (respectively $C_c (X)$) be the set of all real-valued continuous function with the pointwise (respectively compact-open) topology.
\begin{problem} \label{prob5}
Which real vector spaces (for example, $C_p (X)$ or $C_c (X)$) are $s$-groups?
\end{problem}
Let us recall that by result of Pytkeev \cite{Pyt} and Gerlits \cite{Ger}, if a vector space $V$ is either $C_p (X)$ or $C_c (X)$, then $V$ is Fr\'{e}chet-Urysohn $\Leftrightarrow$ it is sequential
$\Leftrightarrow$ it is a $k$-space. It is interesting to know whether this sequence of equivalences can be extended:
\begin{problem} \label{prob6}
Let $V=C_p (X)$ or $V=C_c (X)$ be an $s$-group. Is $V$ Fr\'{e}chet-Urysohn?
\end{problem}

A cover $\mathcal{U}$ of a space $X$ is called an {\it $\omega$-cover} if every finite subset of $X$ is contained in some element $U\in \mathcal{U}$. A space $X$ has {\it property} $(\gamma)$ if every open $\omega$-cover $\mathcal{U}$ of $X$ contains a sequence $\{ U_n \}_{n\in\omega} \subseteq \mathcal{U}$ such that $X= \bigcup_{n\in\omega} \bigcap_{k\geq n} U_k$. It is known that  $C_p (X)$ is sequential if and only if $X$ has property $(\gamma)$ \cite{GeN}. Thus, if for some $X$ that has no property $(\gamma)$  the space  $C_p (X)$ is an $s$-group, we disprove Problem \ref{prob6} for  $C_p (X)$. So, prove or disprove  Problem \ref{prob6} we can answering to the following question:
\begin{problem} \label{prob7}
For which topological spaces $X$ the spaces  $C_p (X)$ and $C_c (X)$ are $s$-groups?
\end{problem}

By Corollary \ref{c01},  $r_s (G,\tau)\geq \omega_1$ for every Fr\'{e}chet-Urysohn group $(G,\tau)$.
\begin{problem} \label{prob8}
Is there a  (countable) Fr\'{e}chet-Urysohn  group $(G,\tau)$ such that $r_s (G,\tau)=\omega_1$?
\end{problem}
\begin{problem} \label{prob9}
Let $(G,\tau)$ be a separable  metrizable group. Is $r_s (G,\tau)= \mathfrak{c}$?
\end{problem}
Problems \ref{prob8} and \ref{prob9} are of independent interest by the following. Let us recall the Malyhin problem \cite[Problem 3.11]{Sha}: is every countable Fr\'{e}chet-Urysohn group metrizable? Now, assume for the moment that there is a countable Fr\'{e}chet-Urysohn   group $(G,\tau)$ such that $r_s (G,\tau)=\omega_1$ and the answer to Problem  \ref{prob9} is in the affirmative. Then the group  $(G,\tau)$ disproves the Malyhin problem under the negation of the Continuum Hypothesis. This strengthens Malyhin's result \cite[Corollary 2.10]{Sha}: $\mathrm{MA}+ \rceil\mathrm{CH}$ implies the existence of a countable
Fr\'{e}chet-Urysohn  group that is not metrizable.

By Proposition \ref{p31}, if $X$ is a sequential space with a fixed point, then $F(X)$ is an $s$-group. It  remains open whether the converse is true:
\begin{problem}
{\rm (A. Leiderman)} Let the Graev free topological group $F(X)$ over a Tychonoff space $X$ with a fixed point is an $s$-group. Is $X$ sequential?
\end{problem}

\noindent {\sc Acknowledgement:} It is a pleasure to thank the anonymous referee for careful reading the paper and finding inaccuracies and numerous suggestions which essentially improve the exposition of the article.

\end{document}